\documentclass{article}
\usepackage{hyperref}
\usepackage{amsmath}
\usepackage{amsthm}
\usepackage{amssymb}
\usepackage{tikz,pgfplots}
\usepackage{tikz-cd}
\usepackage{parselines}
\usepackage{booktabs}
\usepackage{mathrsfs}
\usepackage{setspace}
\usetikzlibrary{arrows.meta}

\bibliographystyle{amsalpha}
\usepackage[backend=biber,style=alphabetic,maxnames=5]{biblatex}
\addbibresource{Fansbib.bib}

\newtheorem{theorem}{Theorem}[section]
\newtheorem{prop}[theorem]{Proposition}
\newtheorem{lemma}[theorem]{Lemma}
\newtheorem{sublemma}[theorem]{Sublemma}
\newtheorem{definition}[theorem]{Definition}

\newtheorem{remark}[theorem]{Remark}

\newtheorem{def-thm}[theorem]{Definition-Theorem}


\onehalfspacing
\numberwithin{equation}{section}
\begin{document}

\title{Extension of Period Maps by Polyhedral Fans}

\author{Haohua Deng}
\date{}

\maketitle              



\begin{center}
\textbf{Abstract}
\end{center}\footnotesize
Kato and Usui developed a theory of partial compactifications for quotients of period domains $D$ by arithmetic groups $\Gamma$, in an attempt to generalize the toroidal compactifications of Ash-Mumford-Rapoport-Tai to non-classical cases.  Their partial compactifications, which aim to fully compactify the images of period maps, rely on a choice of fan which is strongly compatible with $\Gamma$.  In particular, they conjectured the existence of a \emph{complete} fan, which would serve to simultaneously compactify all period maps of a given type.

In this article, we briefly review the theory, and construct a fan which compactifies the image of a period map arising from a particular two-parameter family of Calabi-Yau threefolds studied by Hosono and Takagi, with Hodge numbers $(1,2,2,1)$.  On the other hand, we disprove the existence of complete fans in some general cases, including the $(1,2,2,1)$ case.
\tableofcontents
\normalsize

\section{Introduction}

The boundary behavior of period maps and compactifications of period domains have been studied for decades, and studies on the classical cases are fruitful.  One of the most significant results, provided by Baily and Borel in 1966, is that for a Hermitian symmetric domain $D$ and an arithmetic subgroup $\Gamma\leq Hol(D)^+$, the space $\Gamma\backslash D$ admits a compactification by adding rational boundary components (or ``cusps'') \cite{BB66}. The resulting \emph{Baily-Borel Compactification} $\Gamma\backslash D^*$ is a projective variety, usually highly singular.  To improve the properties of this compactification, Mumford et al \cite{AMRT10} introduced the \emph{toroidal compactification}, which depends on a choice of $\Gamma$-admissable polyhedral fan $\Sigma$. When putting certain restrictions on $\Gamma$ and $\Sigma$, the space produced by toroidal compactification is as good as a smooth projective variety, which is essentially a blow-up of the Baily-Borel compactification at cusps.

Many attempts to describe boundary structures of general period domains and their subvarieties coming from geometric variations, as well as generalize known compactification theories for classical cases to non-classical cases have been made. One of the ideas is that for a variation of Hodge structure coming from a geometric family with singular fibers over discriminant locus, looking at how the Hodge structures ``degenerate'' locally around the singular fibers could be useful. For such a family, Deligne observed that the holomorphic vector bundle fibered by cohomology of smooth projective varieties can be extended holomorphically across a singular locus of simple-normal-crossing type, see \cite{Del97}.  Moreover, Steenbrink showed in a series of works \cite{Ste76,Ste77,SS85} that such degenerating behaviors can be understood by studying the local nearby and vanishing cycles.  In \cite{Sch73}, Schmid proved the fundamental result that the degeneration of Hodge structures, and the resulting \emph{limiting mixed Hodge structures}, have essential relations with the unipotent monodromy of the family, as described by the nilpotent and $\mathrm{SL}_2$-orbit theorems. 

Recently, Kato and Usui developed their own theory of partial compactifications for arithmetic quotients of general period domains in \cite{KU08}. The basic idea is to define boundary components of period domains as parameter spaces for multi-variable nilpotent orbits.  By putting proper topological structures on the resulting spaces (to ``glue in'' these boundary components), one can provide compactifications whose global structure is encoded by some polyhedral fan of nilpotent elements. This purports to generalize the construction of toroidal compactification in \cite{AMRT10} to non-classical cases. 

More precisely, for a period domain $D$ and an arithmetic group $\Gamma\leq Aut(D)$, Kato and Usui's theory requires a choice of a polyhedral fan $\Sigma\subset \mathfrak{g}_\mathbb{Q}:=End(D)_{\mathbb{Q}}$ which is made of nilpotent elements. The faces of $\Sigma$, called nilpotent cones, are expected to contain information about degeneration of Hodge structures along the boundary of $D$ in its compact dual $\check{D}$ (in the sense of naive limits \cite{KPR19}). It has been proved in \cite{KU08} that if we require the chosen fan $\Sigma$ as well as $\Gamma$ to satisfy some certain conditions, the space $X_{\Sigma}:=\Gamma\backslash D_{\Sigma}$ will be a good compactification of $\Gamma\leq Aut(D)$ under the sense it admits a Hausdorff log analytic structure (see \cite[Chap. 4]{KU08}). Kato and Usui also made conjectures about the complete fan and complete weak fan which, if proved to be true, will guarantee compactification for the image of any period map into $\Gamma\backslash D$, as such fan or weak fan is, intuitively speaking, the collection of all $\Gamma$-compatible nilpotent cones. The conjecture about the existence of complete fan has been verified by Kato and Usui themselves (based on \cite{AMRT10}) for classical cases. However, for non-classical cases, Watanabe provided a counterexample in \cite{Wat08}. The conjecture about the existence of complete weak fan still remains open.

\medskip

In this paper, we are going to briefly outline the literature introduced above, and proceed to Kato and Usui's theory 
in the context of a geometric family studied by Hosono and Takagi in their works \cite{HT14} and \cite{HT18}. In Section \ref{S2}, basic background and some classical results from Hodge theory will be reviewed. Section \ref{S3} will be devoted to a brief review of Kato and Usui's work in \cite{KU08}. Some alternative expository resources could be \cite{GGK13} and \cite{Gr09}. In Section \ref{S4}, we are going to take a closer look at Kato and Usui's conjecture about complete fan. We also introduce our first main result (\textbf{Theorem \ref{thm4.6}}), which generalizes Watanabe's result by a new method:\\

\noindent \textbf{Main Result 1: } \textit{For period domains of Hodge type $(a,b,a)$, $b\geq a\geq 2$, and $(1,m,m,1)$, $m\geq 2$, there is no complete fan.}\\

Though it is unlikely we will have a complete fan for any non-classical cases, we can still attempt to construct a polyhedral fan which contains boundary information sufficient to compactify a given variation of Hodge structure. In Sections \ref{S5}-\ref{S6}, based on Hosono and Takagi's investigation into a family of Calabi-Yau 3-folds of Hodge type $(1,2,2,1)$, we will indeed construct a $\Gamma$-strongly compatible polyhedral fan for this family.  Note that this VHS is Mumford-Tate generic, meaning that the image of the period map does not factor through a Mumford-Tate subdomain of the period domain (cf. \textbf{Theorem \ref{thm5.4}}).  In particular, the period map is ``strictly nonclassical'' and a compactification cannot follow from \cite{AMRT10}.  Our second main result (\textbf{Theorem \ref{thm5.3}}) can be summarized as follows:\\

\noindent \textbf{Main Result 2: } \textit{There is an arithmetic subgroup $\Gamma\leq Sp(6,\mathbb{Z})$ of finite index and a fan $\Sigma$ in $\mathfrak{sp}_6$ strongly compatible with $\Gamma$, such that the Kato-Usui space $\Gamma\backslash D_{\Sigma}$ compactifies the image of Hosono and Takagi's period map.}\\

In the Appendix, we provide two examples with Hodge numbers $(2,2,2)$ and $(1,2,2,1)$ to illustrate our proof in Section \ref{S4}.  More precisely, given fans made of nilpotent elements, we are going to show how to construct nilpotent orbits that do not come from these fans and hence lead to a counterexample for the conjecture of complete fans.

\bigskip

\noindent \textbf{Acknowledgement:} The author is grateful to Prof. Matt Kerr, the author's Ph.D thesis advisor, for his help on this project including introducing the topic, explanation on some critical concepts and careful revision. The author also thanks Prof. Patricio Gallardo for generously sharing his ideas as well as Prof. Shinobu Hosono for clarifying some computational results.

\newpage
\section{Preliminaries}\label{S2}

We begin with some basic definitions and concepts in Hodge theory.
\begin{definition} 
\begin{description}
\item[I. ]
Let $R$ be a $\mathbb{Z}$-algebra, usually one of $\mathbb{Z}, \mathbb{Q}, \mathbb{R}$, and $H_{R}$ be a free $R$-module. A \textbf{$R$-Hodge structure} of weight $n$ and type $\{h^{p,q}\}$ where $p,q\geq 0, p+q=n$ acquires the following equivalent definitions:
 \begin{description}
  \item[(i). ] A decomposition of $H_\mathbb{C} := H_R \bigotimes \mathbb{C}$: 
    \begin{center}
    $H_\mathbb{C} = \bigoplus_{p+q=n}H^{p,q} $ with $\overline{H^{p,q}}=H^{q,p}$ and $dim(H^{p,q})=h^{p,q}$.
    \end{center}
  \item[(ii). ] A decreasing filtration $H_\mathbb{C} = F_0 \supseteq F_1 \supseteq ... \supseteq F_n\supseteq \{0\}$ such that $H_{\mathbb{C}}=F_p\bigoplus \overline{F_{n+1-p}}$ for $0\leq p\leq n$ and $dim(F^p\cap \overline{F^q})=h^{p,q}$ for $p+q=n$.
  It is related with (i) by setting $F^k:=\bigoplus_{p\geq k}H^{p,q}$ and $H^{p,q}:= F^p\cap \overline{F^q}$.
  \item[(iii). ] A morphism of algebraic groups $\varphi: U(1)\rightarrow SL(H)$ such that $\varphi(-1)=(-1)^nId_H$.  It is related with (i) by letting $H^{p,q}$ be the $z^p\bar{z}^q$-eigenspace of $\varphi(z)\in SL(V_{\mathbb{C}})$.
 \end{description}
\item[II. ] For two Hodge structures $\{H^{p,q}\}, \{H'^{p,q}\}$, both of weight $n$, a \textbf{morphism of $R$-Hodge structures} is an $R$-linear map from $H$ to $H'$ whose complexification sends $H^{p,q}$ to $H'^{p,q}$ ($\forall p,q$).
\item[III. ] A \textbf{polarized Hodge structure} of weight $n$ consists of a Hodge structure of weight $n$, defined by data $(H_\mathbb{Z}, \{H^{p,q}\}, \{F^p\})$ together with a non-degenerate bilinear form $Q: H_{\mathbb{Z}}\times H_{\mathbb{Z}} \rightarrow \mathbb{Z}$ such that:
   \begin{description}
     \item[(i). ] $Q$ is symmetric (resp. skew-symmetric) if $n$ is even (resp. odd).
     \item[(ii). ] $Q(x,y)=0$ if $x\in H^{p,n-p}, y\in H^{q,n-q}, p+q\neq n$.
     \item[(iii). ] For any $x \neq 0$ in $H^{p,q}$, $i^{p-q}Q(x,\bar{x})>0$.
   \end{description}
\end{description}
\end{definition}

\begin{remark}
When $R=\mathbb{Q}$ or $\mathbb{R}$, let $G_R:=Aut(H,R)$ and its Lie algebra $\mathfrak{g}_R:=End(H,R)$. A Hodge structure of weight $n$ on $H_R$ induces a Hodge structure of weight $0$ on $End(H,R)$, by $End(H,\mathbb{C})^{k,-k}:=\{\phi \in End(H,\mathbb{C})| \phi(H^{p,q})\subset H^{p+k,q-k}, \forall  p+q=n\}$. If the Hodge Structure on $H_R$ is polarized by non-degenerate bilinear form $Q$ on $H_R$, we replace $End(H,R)$ by $End_{Q}(H,R)$.
\end{remark}

\begin{definition}
Given $H_\mathbb{Z}, n\in \mathbb{Z}_{>0}, h=(h^{p,q})_{p+q=n}, f^p=\sum_{k\geq p}h^{k,n-k}, Q: H_{\mathbb{Z}}\times H_{\mathbb{Z}} \rightarrow \mathbb{Z}$ satisties conditions above, denote by $D$ the \textbf{classifying space of polarized Hodge structure} (or \textbf{period domain}) of the given type, which consists of all weight-$n$ polarized Hodge Structures on $H_{\mathbb{R}}$ with Hodge numbers $h^{p,q}$ and polarized by $Q$. We may express $D$ as a subset of a product of Grassmanians
\begin{equation}
    D=\{(F^0, F^1, ..., F^n) \in \Pi_{p=0}^{n} Grass(f^p, dim_{\mathbb{C}}(H_{\mathbb{C}}))\mid \textup{\textbf{(i)}-\textbf{(v)}}\text{ hold}\} 
\end{equation}
where
\begin{description}
   \item[(i). ] $F^{p-1} \supset F^p$ for $1\leq p\leq n$.
   \item[(ii). ] $dim_{\mathbb{C}}(F^p/F^{p+1})=h^{p,q}$ for $0\leq p\leq n-1$.
   \item[(iii). ] $F^p\bigoplus \overline{F^q}=H_{\mathbb{C}}$ for $p+q=n$.
   \item[(iv). ] $Q(F^p, F^q)=0$ when $p+q>n$.
   \item[(v). ] For $x\in F^p\cap \overline{F^q}$ and $x\neq 0$, we have $i^{p-q}Q(x,\bar{x})>0$.
\end{description}
We also define the space $\check{D}$, called the \textbf{compact dual} of $D$, to be the algebraic subvariety of $\Pi_{p=0}^{n} Grass(f^p, dim_{\mathbb{C}}(H_{\mathbb{C}}))$ given by conditions \textup{\textbf{(i)}}, \textup{\textbf{(ii)}}, and \textup{\textbf{(iv)}}.  
\end{definition}

\begin{remark}
\begin{description}

\item[1. ]$D$ is an analytic open subset of $\check{D}$. For example, when $n=1$ and $dim_{\mathbb{C}}(H_{\mathbb{C}})=2$, let $\{e_1, e_2\}$ be a basis of $H_{\mathbb{C}}$ and $Q$ be the skew-symmetric bilinear form defined by $Q(e_1, e_2)=-1$, then $\check{D}\cong \mathbb{P}^1$ and $D\cong \{v \in \mathbb{C}| \mathfrak{Im}(v)>0\}$. 
\item[2. ] Fix a reference point $F^{\bullet} \in D$. It is clear $G_{\mathbb{R}} := Aut(H_{\mathbb{R}}, Q)$ acts on $D$ with stabilizer of $F^{\bullet}$ denoted by $V$. This action is transitive and thus we have $D\cong G_{\mathbb{R}}/V$. Similarly we have a $G_{\mathbb{C}} := Aut(H_{\mathbb{C}}, Q)$-action on $\check{D}$ with stabilizer of $F^{\bullet} \in \check{D}$ denoted as $P$, so that $\check{D}\cong G_{\mathbb{C}}/P$.
\item[3. ] Again fixing a reference point $F^{\bullet} \in \check{D}$, the tangent space $T_{F^{\bullet}}\check{D}$ can be identified with a subspace of $\bigoplus_{0\leq p\leq n}Hom(F^p, H_{\mathbb{C}}/F^p)$; see for example, \cite[Chap. 7]{CZGT13}. We define the \textbf{horizontal tangent space} $T^h_{F^{\bullet}}\check{D}$ to be its intersection with the subspace $\bigoplus_{0\leq p\leq n}Hom(F^p, F^{p-1}/F^p)$. It is clear $T_{F^{\bullet}}D=T_{F^{\bullet}}\check{D}$. Any submanifold of $D$ whose tangent bundle is a subbundle of the horizontal tangent bundle is called a \textbf{variation of Hodge structure}.
\end{description}
\end{remark}

The structure of boundary components of $D$ in $\check{D}$ can be studied by nilpotent orbits and limiting mixed Hodge structures. To start with, we introduce the concept of (polarized) mixed Hodge structures.

\begin{def-thm}
Suppose we have a period domain $D$ parametrizing polarized Hodge structures of type $(h^{p,q}_{p+q=n}, H_{\mathbb{Z}}, Q)$, with compact dual $\check{D}$.  Let $F^{\bullet}\in \check{D}$ and $W_{\bullet}$ be an increasing filtration on $H_{\mathbb{Q}}$. 
\begin{description}
\item[(I). ] We say the pair $(W_{\bullet}, F^{\bullet})$ gives a \textbf{mixed Hodge structure}, if for any $l \in \mathbb{Z}$ and $Gr_{l}^{W_{\bullet}}:=W_{l,\mathbb{C}}/W_{l-1, \mathbb{C}}$, $F^{\bullet}$ induces a weight $l$ Hodge structure on $Gr_{l}^{W_{\bullet}}$. Here the induced filtration by $F^{\bullet}$ is given by $F^pGr_{l}^{W_{\bullet}}:=F^pW_{l,\mathbb{C}}/F^pW_{l-1, \mathbb{C}}$.
\item[(II). ] Let $N$ be a (rational) nilpotent element in $\mathfrak{g}_{\mathbb{Q}}=End_{\mathbb{Q}}(H, Q)$. There exists a unique increasing filtration $W(N)_{\bullet}$ which is called the associated \textbf{weight filtration} or \textbf{Jacobson-Morozov filtration} on $H_{\mathbb{Q}}$, such that:
  \begin{description}
    \item[(i). ] $0=W(N)_{-m-1}\subset W(N)_{-m}\subset ... \subset W(N)_{m}=H_{\mathbb{Q}}$ where $m$ is defined by $N^m\neq 0, N^{m+1}=0$.
    \item[(ii). ] $N(W_l(N))\subset W_{l-2}(N)$.
    \item[(iii). ] $N^l: Gr_{l}^{W(N)}\rightarrow Gr_{-l}^{W(N)}$ is an isomorphism.
  \end{description}
\item[(III). ] Use the notations above, we say $(W_{\bullet}, F^{\bullet})$ is a \textbf{$N$-polarized mixed Hodge structure} on $(H,Q)$ if:
  \begin{description}
    \item[(i). ] $W_{\bullet} = W(N)_{\bullet}[-n]$.
    \item[(ii). ] $N(F^p)\subset F^{p-1}$.
    \item[(iii). ] Writing $P_{n+l}:=ker\{N^{l+1}: Gr_{n+l}\rightarrow Gr_{n-l-2}\}$ and the non-degenerate bilinear form $Q_l(v,w):=Q(v,N^lw)$ on $P_{n+l}$, $F^{\bullet}$ induces a weight $n+l$ Hodge structure on $P_{n+l}$ polarized by $Q_l$.
  \end{description}
\end{description}
\end{def-thm}

\begin{definition}\label{def2.6}
Let $F^{\bullet}\in \check{D}$ and $\{N_j\}_{1\leq j\leq k}$ be nilpotent elements in $\mathfrak{g}_{\mathbb{Q}}$. Let $\sigma$ be the rational polyhedral cone generated by $\{N_j\}_{1\leq j\leq k}$. We call $\sigma$ a \textbf{nilpotent cone} and $Z:=exp(\sigma_{\mathbb{C}})F^{\bullet} \subset \check{D}$ a $\sigma$-\textbf{nilpotent orbit} if the following conditions are satisfied:
\begin{description}
  \item[(i) (Commutativity).] $[N_i, N_j]=0$ for $\forall i,j$.
  \item[(ii) (Griffiths Transversality).] $N_i(F^p)\subset F^{p-1}$ for $\forall i,p$.
  \item[(iii) (Positivity).] For $\{z_j\}_{1\leq j\leq k}\subset \mathbb{C}$, and $\mathfrak{Im}(z_j)>>0$, we have $$exp(\textstyle\sum_{1\leq j\leq k}z_jN_j)F^{\bullet}\in D.$$
\end{description}
\end{definition}

\begin{remark}
It is clear that $F^{\bullet}$ can be replaced by $exp(\sum_{1\leq j\leq k}\alpha_jN_j)F^{\bullet}$ for any $(\alpha_1,...,\alpha_k)\in \mathbb{C}^k$ without changing the nilpotent orbit. The action of changing the base point in this way is called \textbf{rescaling}.
\end{remark}

For any nilpotent element $N\in int(\sigma)$, consider its associated weight filtration $W(N)_{\bullet}$. Cattani and Kaplan proved the following significant result, which offers a practical way to characterize nilpotent orbits.

\begin{theorem}\label{thm2.8}
\textbf{\cite[Thm. 3.3]{CK82}} Suppose $\sigma$ is a rational polyhedral cone underlying a nilpotent orbit. For each face $\tau\leq \sigma$, there exists an increasing filtration $W(\tau)_{\bullet}$ such that $W(N)_{\bullet}=W(\tau)_{\bullet}$ whenever $N\in int(\tau)$.
\end{theorem}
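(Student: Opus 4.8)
The plan is to reduce the statement, by a connectedness argument, to a lemma about two commuting nilpotents, and then to prove that lemma using the theory of relative monodromy weight filtrations together with the polarization hypothesis built into Definition \ref{def2.6}. Recall first that for \emph{any} nilpotent $N\in\mathfrak{g}_{\mathbb{Q}}$ the weight filtration $W(N)$ is unconditionally defined (Jacobson--Morozov) and satisfies $W(\lambda N)=W(N)$ for $\lambda>0$; what must be shown is that $N\mapsto W(N)$ is constant on the relative interior of each face. Fix a face $\tau\leq\sigma$ --- which, as is standard, is itself a nilpotent cone underlying a nilpotent orbit $(\tau,F)$ --- and fix $N,N_0\in\mathrm{int}(\tau)$. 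It suffices to prove $W(N)=W(N_0)$, and for this I would show that $s\mapsto W(N_s)$ is locally constant (hence constant, as $\mathrm{int}(\tau)$ is convex) along the segment $N_s=(1-s)N+sN_0$, $s\in[0,1]$. Fixing $s_0$ and setting $M=N_{s_0}$, for small $\eta>0$ the elements $A:=M-\eta(N_0-N)$ and $B:=M+\eta(N_0-N)$ again lie in $\mathrm{int}(\tau)$; since the generators of $\tau$ commute and are Griffiths transverse, so are $A,B$, and one checks directly from the positivity of $\tau$ that $(\langle A,B\rangle,F)$ is a nilpotent orbit. An elementary computation gives $N_s=(1-t)A+tB$ with $t=\tfrac{1}{2}\bigl(1+\tfrac{s-s_0}{\eta}\bigr)\in(0,1)$ for $|s-s_0|<\eta$, so local constancy of $W(N_s)$ follows once one knows: for commuting nilpotents $A,B$ with $(\langle A,B\rangle,F)$ a nilpotent orbit, $W(aA+bB)$ is independent of $(a,b)\in\mathbb{R}_{>0}^2$.

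This two-generator statement is the heart of the matter. If $A,B$ are proportional it is trivial, so suppose they span a $2$-dimensional cone. The idea is to express $W(A+B)$ in terms of data that positive rescaling cannot disturb. First, by Schmid's one-variable nilpotent orbit theorem \cite{Sch73}, since $\mathbb{R}_{\geq 0}A$ underlies a nilpotent orbit the pair $(W(A)[-n],F)$ is a mixed Hodge structure polarized by $A$ (and likewise for $A+B$ and for $aA+bB$). Next one shows that $B$ preserves the filtration $W(A)$, so that the relative monodromy weight filtration $W(B,W(A))$ is defined; this is where the polarization enters essentially, via the $\mathrm{SL}_2$-orbit theorem and the theory of relative weight filtrations in the polarizable setting, as in \cite{CK82}. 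One then proves $W(A+B)=W(B,W(A))$. Since $W(aA)=W(A)$ and $W(bB,W(A))=W(B,W(A))$ for $a,b>0$, applying the last identity to the pair $(aA,bB)$ --- which generates the same cone $\langle A,B\rangle$, hence the same nilpotent orbit --- gives $W(aA+bB)=W(B,W(A))$ for all $a,b>0$, which is the lemma. Feeding this back into the segment argument yields $W(N)=W(N_0)$; as $N,N_0\in\mathrm{int}(\tau)$ and the face $\tau$ were arbitrary, one sets $W(\tau)_\bullet:=W(N)_\bullet$ for any $N\in\mathrm{int}(\tau)$, and the theorem follows. (This is a first step towards the fact that the whole cone carries a single limiting mixed Hodge structure, polarized by every interior element.)

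The combinatorial reduction of the first paragraph is routine; all the difficulty is concentrated in the two steps ``$B$ preserves $W(A)$'' and ``$W(A+B)=W(B,W(A))$.'' These cannot be deduced from commutativity alone --- for commuting nilpotents with no Hodge-theoretic positivity, $W(aA+bB)$ genuinely varies with $(a,b)$ --- so the positivity axiom (iii) of Definition \ref{def2.6} (equivalently, Schmid's nilpotent and $\mathrm{SL}_2$-orbit theorems) must be used in an essential way. The cleanest route is via the $\mathrm{SL}_2$-orbit theorem: it produces an $\mathfrak{sl}_2$-splitting of the limiting mixed Hodge structure that is simultaneously adapted to the whole cone, and from this splitting both the preservation statement and the relative-weight-filtration identity can be read off. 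Carrying this out rigorously is the technical core of \cite{CK82}, and it is the part I expect to be the main obstacle.
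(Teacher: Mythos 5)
The paper does not prove this statement at all: Theorem \ref{thm2.8} is imported verbatim as \cite[Thm.~3.3]{CK82}, so there is no internal argument to compare yours against. Judged on its own terms, your outline correctly reproduces the architecture of the Cattani--Kaplan proof, and the reduction steps are sound: interior points of a polyhedral face are positive combinations of its generators, so your $A,B$ do span a cone inside $\tau$ on which positivity is inherited, and the segment/local-constancy argument is fine. One small point to tighten: when you assert that a face $\tau\leq\sigma$ again underlies a nilpotent orbit ``as is standard,'' the base point must in general be shifted, e.g.\ to $\exp\bigl(i\lambda\sum_{N_j\notin\tau}N_j\bigr)F$ with $\lambda\gg 0$, since the positivity axiom (iii) of Definition \ref{def2.6} for $\sigma$ only controls the region where \emph{all} imaginary parts are large; with that modification your verification that $(\langle A,B\rangle, F')$ is a nilpotent orbit goes through unchanged.

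The genuine gap is that the three steps you isolate --- $B$ preserves $W(A)$, the relative weight filtration $W(B,W(A))$ exists, and $W(A+B)=W(B,W(A))$ --- are left unproved, and they are not side issues: they \emph{are} the content of \cite[Thm.~3.3]{CK82}, so your argument reduces the cited theorem to the technical core of the very reference it would replace. None of these steps follows from Definition \ref{def2.6} by soft arguments: even granting that $B$ preserves $W(A)$, the existence of the relative filtration is a theorem, and the identity $W(A+B)=W(B,W(A))$ in the polarized setting is established in \cite{CK82} by running the one-variable $\mathrm{SL}_2$-orbit theorem of \cite{Sch73} on auxiliary orbits such as $\exp(zB)\exp(iA)F^{\bullet}$ and inducting, not merely by quoting that $(W(A)[-n],F^{\bullet})$ is a polarized mixed Hodge structure. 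You say as much yourself, so the proposal should be read as a correct reduction plus a deferral to \cite{CK82} --- which, in effect, is exactly how the paper treats the statement, namely as a quoted background result rather than something it proves.
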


The following definition is motivated by the purpose of describing nilpotent orbits by their associated monodromy weight filtrations:

\begin{def-thm}
For a nilpotent orbit $Z:=exp(\sigma_{\mathbb{C}})F^{\bullet} \subset \check{D}$, let $W_{\bullet}:=W(\sigma)_{\bullet}$ be the weight filtration on $H_{\mathbb{Q}}$ generated by any element $N\in int(\sigma_{\mathbb{Q}})$. The pair $(W_{\bullet}, F^{\bullet})$ gives a mixed Hodge structure on $H_{\mathbb{Q}}$, which is called the \textbf{limiting mixed Hodge structure} associated to $Z$.
\end{def-thm}

We turn to submanifolds of a period domain $D$ coming from geometric families. Let $B$ be a connected smooth quasi-projective variety. Suppose $\bar{B}$ is a smooth projective variety and $\bar{B}\backslash B$ consists of simple normal crossing divisors. Consider a quasiprojective manifold $\mathcal{X}$ together with a holomorphic submersion $\phi: \mathcal{X}\rightarrow B$. For $b\in B$, denote the fiber $X_b:=\phi^{-1}(b)$. Assume that for any $b\in B$, $X_b$ is a smooth projective variety.

Consider the local system $R^n\phi_*(\underline{\mathbb{Z}}_{\mathcal{X}})$ on $B$. The fiber over $b\in B$ is exactly $H^n(X_b, \mathbb{Z})$; its complexification admits the well-known Hodge decomposition
\begin{equation}
    H^n(X_b, \mathbb{C})=\bigoplus_{p+q=n}H^{p,q}(X_b),
\end{equation}
where $H^{p,q}(X_b)$ is the subspace of de Rham cohomology of $X_b$ represented by $(p,q)$-forms. It is also well-known fact that $h^{p,q}(t):=dim_{\mathbb{C}}H^{p,q}(X_b)$ are constants for $b\in B$ and the bundles $\mathbb{F}^l_b:=\bigoplus_{p\geq l}H^{p,n-p}(X_b)$ are holomorphic subbundles, called the \textbf{Hodge bundles} of the family. Therefore, for fibers $\{X_b\}_{b\in B}$, we have Hodge Structures of weight $n$ given by the holomorphic sections of the Hodge bundles.

Now fix a point $b_0\in B$. For any point $b_1\in B$ and a path $\gamma$ from $b_0$ to $b_1$, there is an induced map $\bar{\gamma}: H^n(X_{b_0}, \mathbb{Z}) \rightarrow H^n(X_{b_1}, \mathbb{Z})$ which depends only on the homotopy class of $\gamma$. In particular, by considering all loops in $B$ based on $b_0$ and their homotopy classes, we have the map
\begin{equation}
    \rho: \pi_1(B,b_0)\rightarrow Aut(H^n(X_b), \mathbb{Z}),
\end{equation}
called the \textbf{monodromy representation} of the family. Denote 
\begin{equation}
\Gamma:=Img(\rho)\leq Aut(H^n(X_b), \mathbb{Z}), 
\end{equation}
and let $D$ be the period domain of the type given by the Hodge numbers. It is clear that we have a natural map
\begin{equation}
    \varphi: B\rightarrow \Gamma \backslash D,
\end{equation}
called the \textbf{period map} associated to the family. In the remainder of this paper, we will focus on studying extensions of $\varphi$ from $B$ to $\bar{B}$, with image in a properly-chosen compactification of $\Gamma\backslash D$.




Now we consider the period map locally. Suppose $B=(\Delta^*)^k\subset \bar{B}=(\Delta)^k$ where (resp. $\Delta^*$) $\Delta$ is the (resp. punctured) unit disk in $\mathbb{C}$. In other words, the divisor $\mathbb{D}:=\bar{B}\backslash B$ is determined by the equation $z_1z_2...z_k=0$ where $z_i$ are coordinates on $\mathbb{C}^k$. For a fixed point $b_0\in B$, $\pi_1(B,b_0)$ is free abelian and generated by $\gamma_1,...,\gamma_k$, where $\gamma_i$ is represented by a small loop based on $b_0$ and goes around the divisor $\mathbb{D}_i:=\{x_i=0\}$. Let $T_i\in \Gamma$ be the image of $\gamma_i$ under the monodromy representation.

\begin{theorem}
\textbf{(Monodromy Theorem)} All $T_i$ are quasi-unipotent, which means that there exist positive integers $w_i, v_i$ such that $(T_i^{w_i}-I)^{v_i}= 0$ for every $i$. Moreover, $v_i\leq n+1$ for each $i$.
\end{theorem}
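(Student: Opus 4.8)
The plan is to treat the two assertions separately: quasi-unipotence of each $T_i$, and then the bound on the nilpotency exponent of its unipotent part. Since the statement is local and the monodromy around the $i$-th divisor is unaffected by freezing the remaining coordinates, I would first restrict the family to a one-parameter slice and assume $k=1$: $B=\Delta^*$, $T:=T_i$, and $\tilde\varphi\colon \mathfrak{H}\to D$ the lift of the (holomorphic, horizontal) period map along the universal cover of $\Delta^*$ given by the upper half-plane $\mathfrak{H}$, $w\mapsto e^{2\pi i w}$, so that $\tilde\varphi(w+1)=T\,\tilde\varphi(w)$ and $y\to\infty$ in $iy$ corresponds to approaching the puncture.

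For quasi-unipotence I would run Borel's argument from the appendix to \cite{Sch73}. The geometric input is that $\check D$ carries a $G_{\mathbb{R}}$-invariant Hermitian metric whose holomorphic sectional curvature is bounded above by a negative constant along the horizontal directions; since $\tilde\varphi$ is holomorphic and horizontal, the Ahlfors--Schwarz lemma makes it distance-decreasing up to a fixed constant $C$ relative to the Poincaré metric on $\mathfrak{H}$. Hence $d_D\big(\tilde\varphi(iy),\,T\tilde\varphi(iy)\big)\le C\,d_{\mathfrak{H}}(iy,\,iy+1)\to 0$ as $y\to\infty$. Writing $\tilde\varphi(iy)=g_y\cdot o$ for a base point $o\in D=G_{\mathbb{R}}/V$ with $V$ compact, this says $g_y^{-1}Tg_y$ moves $o$ by an amount tending to $0$; since $G_{\mathbb{R}}$ acts properly on $D$, the elements $g_y^{-1}Tg_y$ lie in ever-smaller neighborhoods of the compact group $V$, so the eigenvalues of $T$ — being conjugation-invariant — are approximated arbitrarily well by eigenvalues of elements of $V$, all of which have absolute value $1$; as $T$ is fixed, its eigenvalues lie on the unit circle. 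But $T\in Aut(H^n(X_b),\mathbb{Z})$, so its characteristic polynomial is monic with integer coefficients and all roots of absolute value $1$, whence by Kronecker's theorem each root is a root of unity. Thus $T^{w}$ is unipotent for suitable $w=w_i>0$, giving integers $w_i,v_i$ with $(T_i^{w_i}-I)^{v_i}=0$. (Alternatively, quasi-unipotence has a purely arithmetic proof via Grothendieck's local monodromy theorem on $\ell$-adic cohomology, using that Frobenius conjugates a tame generator to its $q$-th power.)

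For the bound $v_i\le n+1$ I would put $U:=T^{w_i}$ and $N:=\log U\in\mathfrak{g}_{\mathbb{Q}}$, a nilpotent infinitesimal automorphism of $Q$; since $U-I=N\cdot P(N)$ for a polynomial $P$ in $N$ with $P(0)=Id$, it suffices to prove $N^{n+1}=0$. Here I would invoke Schmid's nilpotent orbit theorem \cite{Sch73}: the limit $F_\infty:=\lim_{y\to\infty}e^{-iyN}\tilde\varphi(iy)$ exists in $\check D$, and $(W_\bullet,F_\infty)$ with $W_\bullet:=W(N)_\bullet[-n]$ is an $N$-polarized limiting mixed Hodge structure on $H^n$. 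Because $F_\infty\in\check D$ we have $F_\infty^0=H_{\mathbb{C}}$ and $F_\infty^{n+1}=0$, and these length constraints are inherited by the Hodge filtration induced on each graded piece $Gr_k^{W}$; a pure Hodge structure of weight $k$ whose filtration satisfies $F^0=$ everything and $F^{n+1}=0$ can be nonzero only for $0\le k\le 2n$, by conjugate symmetry of its Hodge numbers. The recentered filtration $W_\bullet$ is symmetric about $n$, with $Gr^{W}_{n-m}$ and $Gr^{W}_{n+m}$ nonzero where $m$ is the nilpotency index of $N$ ($N^m\neq 0$, $N^{m+1}=0$); hence $n+m\le 2n$, i.e. $m\le n$, so $N^{n+1}=0$ and $(T_i^{w_i}-I)^{n+1}=0$. (In the geometric setting one may instead bypass the nilpotent orbit theorem via a semistable reduction and Steenbrink's construction \cite{Ste76,Ste77,SS85} of the limiting mixed Hodge structure through the weight spectral sequence, which manifestly carries weights in $[0,2n]$ on $H^n$.)

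The main obstacle is not the elementary part — the conjugation argument for the eigenvalues, Kronecker's theorem, and the weight-length bookkeeping on graded pieces — but the analytic and Hodge-theoretic machinery it rests on: the negative holomorphic sectional curvature of $\check D$ in the horizontal directions (which forces horizontal holomorphic maps to be distance-decreasing) and, for the sharp exponent $v_i\le n+1$, the existence of the limiting mixed Hodge structure, i.e. the nilpotent orbit theorem. The proof above is essentially an orchestration of these two deep inputs, each of which I would cite rather than reprove.
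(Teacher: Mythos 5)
Your proposal is correct; note that the paper does not prove this statement at all — it is quoted as classical background (the Monodromy Theorem of Borel–Landman–Schmid, cf. \cite{Sch73}), so there is no in-paper argument to compare against. What you give is essentially the standard proof: Borel's distance-decreasing/curvature argument plus Kronecker's theorem for quasi-unipotence, and the limiting mixed Hodge structure from the nilpotent orbit (or $\mathrm{SL}_2$-orbit) theorem, with the weight-range bound $0\le k\le 2n$ on $Gr^W_k$, for the exponent bound $v_i\le n+1$, exactly as in \cite{Sch73}.
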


Therefore, by performing a finite base change, we can assume all $T_i$ are unipotent. Denote $N_i:=log(T_i)$ as the monodromy logarithms, $\mathfrak{H}:=\{z\in \mathbb{C}| \mathfrak{Im}(z)>0\}$ as the upper-half plane. Consider the universal covering map:
\begin{equation}
\Phi: \mathfrak{H}^k\rightarrow (\Delta^*)^k, \Phi(\vec{z})=exp(2\pi i\vec{z}), \vec{z}=(z_1,...,z_k).
\end{equation}
We have the following commutative diagram:

\begin{equation}
\begin{tikzcd}
\mathfrak{H}^k \arrow[d, "\Phi"] \arrow[r, "\tilde{\varphi}"] & D \arrow[d] \\
B=(\Delta^*)^k \arrow[r, "\varphi"] & \Gamma \backslash D
\end{tikzcd}
\end{equation}

\noindent It is also clear that the action of $T_1,\ldots,T_k$ on $D$ can be interpreted as:
\begin{equation}
   \tilde{\varphi}(z_1,...,z_i+1,...,z_k)=T_i\tilde{\varphi}(z_1,...,z_k), T_i=exp(N_i).
\end{equation}
Therefore, to get a single-valued map from $(\Delta^*)^k$ to $\check{D}$, we need to ``untwist'' the map $\tilde{\varphi}$. Let $\Psi: H^k\rightarrow \check{D}$ and $\psi: (\Delta^*)^k\rightarrow \check{D}$ be defined as:
\begin{equation}
    \Psi(z_1,...,z_k):= exp(\sum_{1\leq i\leq k}-z_iN_i)\tilde{\varphi}(z_1,...,z_k),
\end{equation}
\begin{equation}
    \psi(e^{2\pi iz_1},...,e^{2\pi iz_k}):= \Psi(z_1,...,z_k)
\end{equation}

\noindent Regarding the properties of $\psi$, Schmid proved the following fundamental result.

\begin{theorem}
\textbf{\cite{Sch73} (Schmid's Nilpotent Orbit Theorem)}
\begin{description}
  \item[(i). ] The map $\psi$ can be extended holomorphically to $\Delta^k$.
  \item[(ii). ] With the extension in \textbf{(i)}, the set $Z:=exp(\sum_{1\leq j\leq k}z_jN_j)\psi(0)\subset \check{D}$ is a nilpotent orbit. 
\end{description}
\end{theorem}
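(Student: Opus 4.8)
The plan is to run Schmid's original argument, whose one genuinely hard analytic input is a curvature estimate. Fix a Hermitian metric on $\check D$ coming from a projective embedding $\check D\hookrightarrow\mathbb P^M$, and fix the $G_{\mathbb R}$-invariant (Hodge) metric on $D$; by homogeneity of $D$ together with the Griffiths--Schmid computation, the holomorphic sectional curvature of $D$ along \emph{horizontal} tangent directions is bounded above by a constant $-\kappa<0$. Since the lift $\tilde\varphi\colon\mathfrak H^k\to D$ is horizontal, its restriction to any coordinate slice in which only $z_j$ varies is a horizontal holomorphic map $\mathfrak H\to D$, so the Ahlfors--Schwarz lemma applies with the uniform constant coming from $\kappa$; letting $j$ range shows that $\tilde\varphi$ pulls the Hodge metric back to a form dominated by the Poincar\'e metric on $\mathfrak H^k$, in particular $\|d\tilde\varphi(\partial/\partial z_j)\|\le C/\mathrm{Im}(z_j)$. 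This is the only place where the intrinsic geometry of $D$ intervenes, and it is the first step I would want to pin down carefully (including the comparison with the Fubini--Study metric needed to promote it to an honest growth bound on how fast $\tilde\varphi(z)$ can approach $\partial D$ inside $\check D$).

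Granting the estimate, part (i) is essentially formal. Because each $N_j$ is nilpotent, $\operatorname{Ad}(\exp(\sum_j z_jN_j))$ is polynomial in the $z_j$, so the identity $\Psi=\exp(-\sum_j z_jN_j)\tilde\varphi$ together with the bound above forces $\Psi$, read through the embedding $\check D\hookrightarrow\mathbb P^M$, to grow at most polynomially in the $\mathrm{Im}(z_j)$ as one tends to the boundary. In the coordinates $t_j=e^{2\pi i z_j}$ this says that, after a suitable normalization, the homogeneous coordinates of $\psi$ on $(\Delta^*)^k$ are single-valued holomorphic functions of growth $O\big(\prod_j(\log 1/|t_j|)^{M}\big)$. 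But a single-valued holomorphic $f$ on $\Delta^*$ with $|f(t)|=O((\log 1/|t|)^{M})$ has vanishing negative Laurent coefficients --- estimate $a_{-n}$ by integrating $f(t)\,t^{n-1}$ over $|t|=r$ and let $r\to 0$, using $r^{n}(\log 1/r)^M\to 0$ --- hence extends across $t=0$; applying this variable by variable (with a routine Hartogs-type argument for joint holomorphy) extends $\psi$ to $\Delta^k$, which is part (i).

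For part (ii) the three conditions of Definition \ref{def2.6} split into two easy ones and one hard one. Commutativity $[N_i,N_j]=0$ holds because $\pi_1((\Delta^*)^k)$ is abelian and the $T_i=\exp(N_i)$ commute; Griffiths transversality $N_j(F^p_\infty)\subset F^{p-1}_\infty$ for $F_\infty:=\psi(0)$ follows from horizontality of $\tilde\varphi$ by a limiting argument as $\mathrm{Im}(z_j)\to\infty$, using that $\psi$ extends holomorphically across $t=0$ by (i); and $Z=\exp(\sum_j z_jN_j)F_\infty\subset\check D$ automatically, since $\exp(\sum_j z_jN_j)\in G_{\mathbb C}$ and $\check D$ is $G_{\mathbb C}$-homogeneous. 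The remaining condition is positivity: $\exp(\sum_j z_jN_j)F_\infty\in D$ once all $\mathrm{Im}(z_j)$ are large. Here I would write $\psi(t)=\exp(\Gamma(t))F_\infty$ for a holomorphic $\mathfrak g_{\mathbb C}$-valued $\Gamma$ with $\Gamma(0)=0$, so that
\[
\tilde\varphi(z)=\exp\!\big(\operatorname{Ad}(\exp(\textstyle\sum_j z_jN_j))\,\Gamma(e^{2\pi i z})\big)\cdot\exp(\textstyle\sum_j z_jN_j)F_\infty .
\]
Since $\operatorname{Ad}(\exp(\sum_j z_jN_j))$ is polynomial in the $z_j$ while $\Gamma(e^{2\pi i z})$ decays exponentially in $\min_j\mathrm{Im}(z_j)$, the left-hand factor tends to the identity, so $\tilde\varphi(z)$ and the orbit point $\exp(\sum_j z_jN_j)F_\infty$ become arbitrarily close in $\check D$ as all $\mathrm{Im}(z_j)\to\infty$, and since $\tilde\varphi(z)\in D$ while the positivity conditions \textbf{(v)} cut out an \emph{open} subset of $\check D$, one wants to conclude that the orbit point lies in $D$ too.

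The main obstacle is exactly this last step: the soft fact that $D$ is open and $\tilde\varphi(z)$ is nearby is not enough by itself, because $\tilde\varphi(z)$ may simultaneously drift toward $\partial D$. One has to combine the Schwarz-lemma bound of the first paragraph (which caps the rate at which $\tilde\varphi(z)$ can approach $\partial D$) with the invariance of the orbit's membership in $D$ under real translations $z\mapsto z+\mathbb R^k$ (reducing to $\mathrm{Re}(z)=0$), and verify that the left-hand factor in the displayed identity converges to the identity strictly faster than $\tilde\varphi(z)$ can degenerate. This quantitative comparison, which is the technical heart of Schmid's theorem, is where I expect essentially all of the work to lie; the extension statement (i), by contrast, is a soft consequence of the growth estimate.
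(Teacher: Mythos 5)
This statement is not proved in the paper at all: it is Schmid's Nilpotent Orbit Theorem, quoted from \cite{Sch73} as background, so there is no in-paper argument to compare yours against. Judged on its own terms, your outline is a faithful roadmap of Schmid's original proof (negative horizontal holomorphic sectional curvature, Ahlfors--Schwarz distance-decreasing estimate, comparison with an ambient metric on $\check D$ to get polynomial growth of $\Psi$ in $\mathrm{Im}(z_j)$, removable-singularity argument in the coordinates $t_j=e^{2\pi i z_j}$, and then the splitting of the nilpotent-orbit conditions into the easy ones and positivity), but it is a sketch rather than a proof, and you say so yourself.

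The genuine gaps are exactly the two places you flag. First, the passage from the Schwarz-lemma bound $\|d\tilde\varphi(\partial/\partial z_j)\|\le C/\mathrm{Im}(z_j)$ in the invariant metric to a polynomial bound on the projective coordinates of $\Psi=\exp(-\sum_j z_jN_j)\tilde\varphi$ is not automatic: the invariant metric on $D$ and the Fubini--Study metric on $\check D$ are not uniformly comparable near $\partial D$, and Schmid needs a separate estimate (distance $O(\log\mathrm{Im}\,z)$ from a base point implies coordinate growth at most a power of $\mathrm{Im}\,z$) before the Laurent-coefficient argument applies; as written, part (i) rests on an unproved comparison. Second, and more seriously, your positivity argument stops at ``one wants to conclude'': the displayed identity shows the correcting factor $\exp(\mathrm{Ad}(\exp(\sum_j z_jN_j))\Gamma(e^{2\pi iz}))$ tends to the identity, but since $\tilde\varphi(z)$ may approach $\partial D$ at the same time, proximity to a point of $D$ proves nothing without a quantitative lower bound on the distance from $\tilde\varphi(z)$ to $\partial D$ (or Schmid's actual norm estimates on the Hodge filtration), and that comparison is precisely the analytic core of the theorem. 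So the proposal correctly identifies the architecture but does not close the argument; to make it a proof you would have to carry out the metric comparison in step (i) and the quantitative degeneration-versus-convergence estimate in step (ii), which is where essentially all of Schmid's work lies.
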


At this point, it is clear that for a variation of Hodge structure on a connected complex manifold $B$, with $\mathbb{D}=\bar{B}\backslash B$ a union of simple normal crossing divisors, the behaviors of the period map around strata of $\mathbb{D}$ are characterized by nilpotent orbits and associated Limiting Mixed Hodge Structures.

\section{The theory of Kato-Usui Spaces}\label{S3}
In this section, we briefly review the theory of Kato-Usui spaces \cite{KU08}.  We refer the chapter $0$ of \cite{KU08} or Appendix to chapter $10$ of \cite{GGK13} for an overview, and \cite{Gr09} for an introductory report.

\begin{definition}\label{def3.1}
Let $G$ and $\mathfrak{g}:=Lie(G)$ be as defined before. A \textbf{fan} $\Sigma\subset \mathfrak{g}_{\mathbb{Q}}$ is a collection of sharp rational nilpotent cones in $\mathfrak{g}_{\mathbb{Q}}$ that satisfies the following conditions:
\begin{description}
  \item[(i). ] $\Sigma$ is closed under the action of taking faces.
  \item[(ii). ] For any $\sigma, \sigma' \in \Sigma$, $\sigma \cap \sigma'$ is a face of both of $\sigma, \sigma'$.
\end{description}
Here we say a cone $\sigma$ is \textbf{sharp} if $\sigma \cap (-\sigma)=\{0\}$.
\end{definition}

\begin{remark}\label{rem3.2}
If we replace the condition (ii) in \textbf{Definition \ref{def3.1}} by 
\begin{description}
  \item[(ii)'. ] If $\sigma, \sigma' \in \Sigma$ have a common interior point and there exists $F^{\bullet}\in \check{D}$ such that both of $\exp(\sigma_{\mathbb{C}})F^{\bullet}$ and $\exp(\sigma'_{\mathbb{C}})F^{\bullet}$ are nilpotent orbits, then $\sigma=\sigma'$.
\end{description}
then we call the resulting $\Sigma$  a \textbf{weak fan}.
\end{remark}

A natural approach to extending a period map $B\to \Gamma\backslash D$ to $\bar{B}$ is to add nilpotent orbits. This motivates the following definition: given a fan $\Sigma \subset \mathfrak{g}_{\mathbb{Q}}$, put
\begin{center}
    $D_{\Sigma}:=\{(\sigma, Z_{\sigma})|\sigma\in \Sigma$, $Z_{\sigma}$ is a $\sigma$-nilpotent orbit\}/\{rescalings\}.
\end{center}

\noindent It is clear that $D \subset D_{\Sigma}$ by $F^{\bullet} \mapsto (\{0\}, F^{\bullet})$.  In order to work modulo $\Gamma$, we need some compatibility conditions:
\begin{definition}\label{def3.3}
\textbf{\cite[Sec. 0.4.10]{KU08}} Let $\Sigma$ and $\Gamma$ be defined as above.
\begin{description}
  \item[(I). ] We say $\Gamma$ is \textbf{compatible} with $\Sigma$, if $\Sigma$ is closed under $Ad(\Gamma)$-action.
  \item[(II). ] We say $\Gamma$ is \textbf{strongly compatible} with $\Sigma$, if $\Gamma$ is compatible with $\Sigma$, and for any $\sigma\in \Sigma$, $\sigma$ is generated by $log(\Gamma(\sigma))$, where $\Gamma(\sigma):=\Gamma\cap exp(\sigma)$.
\end{description}
\end{definition}
\begin{remark}
When $\Gamma=G_{\mathbb{Z}}$, \textbf{Definition \ref{def3.3} (II)} exactly means that $\Sigma$ comes from logarithms of rational points of $G$.
\end{remark}
Given a strongly compatible pair $(\Gamma,\Sigma)$, we now set
\begin{center}
    $X_{\Sigma}:=\Gamma\backslash D_{\Sigma}$.
\end{center}
The non-trivial boundary components of $X_{\Sigma}$ are given by $\Gamma_{\sigma}\backslash B(\sigma)$, where $\sigma$ is a (non-trivial) rational nilpotent cone, $B(\sigma)$ is the set of nilpotent orbits polarized by $\sigma$ modulo rescaling, and $\Gamma_{\sigma}\leq \Gamma$ is the stabilizer of $\sigma$. (Regarding the computation on dimension of $\Gamma_{\sigma}\backslash B(\sigma)$, we refer to \cite[Chap. 10]{GGK13}.) The upshot is that we may write
\begin{center}
    $X_{\Sigma}=\Gamma\backslash D_{\Sigma}=D\bigcup_{\sigma\leq \Sigma}(\Gamma_{\sigma}\backslash B(\sigma))$.
\end{center}
In the classical cases (where the tangent bundle and the horizontal tangent bundle of $D$ coincide), this construction recovers the toroidal compactifications of \cite{AMRT10}.  All classical cases are listed in \cite[Sec. 0.4.14]{KU08}.

For non-classical cases, Kato-Usui proved that (in a certain topology) $X_{\Sigma}=\Gamma\backslash D_{\Sigma}$ admits the structure of a log-analytic Hausdorff space, with slits caused by Griffiths transversality at the boundaries. We refer readers to Kato-Usui's original work \cite{KU08} for the full story.

\begin{remark}
\begin{description}
\item[(i). ] Suppose that $\Gamma$ is \textbf{neat}, i.e. no element has non-trivial roots of unity as eigenvalues, and $(\Gamma, \Sigma)$ is a strongly compatible pair.  Then $X_{\Sigma}=\Gamma\backslash D_{\Sigma}$ admits a structure of \textbf{logarithmic manifold}, see \cite[Thm. A]{KU08}.
\item[(ii). ] Moreover, when we assume the completeness of the fan $\Sigma$ (to be defined in the next section), $X_{\Sigma}=\Gamma\backslash D_{\Sigma}$ --- while itself only a ``partial compactification'' in general due to the aforementioned slits --- provides a compactification for any (closed) variation of Hodge structure inside $\Gamma\backslash D$. 
\end{description}
\end{remark}

Kato and Usui proved the fans constructed in \cite{AMRT10}, which produce (full) toroidal compactifications in the classical cases, are complete in their sense.  They also conjectured the existence of a complete fan for general cases. However, \cite{Wat08} gives a counterexample which disproved the conjecture. In the next section, we will disprove the existence of a complete fan in somewhat more general settings.

\section{Kato-Usui's Complete Fan Conjecture}\label{S4}

In order to generalize the signature features of toroidal compactification introduced in \cite{AMRT10} to the non-classical setting, Kato-Usui introduced the concept of complete fan for non-classical period domains and conjectured the existence of such a fan, see \cite{KU08}. They also proved the conjecture is true for the classical case introduced in \cite{AMRT10}. Intuitively speaking, a complete fan consists of all stories about nilpotent orbits, and therefore compactifies every period map of a given type.

The conjecture is already proved to be false:  in \cite{Wat08}, Watanabe gave a counterexample with period domain of Hodge type $(2,2,2)$. In this section, we are going to investigate the conjecture. Moreover, we give a proof that the conjecture fails for a wider classes of period domains.  Based on the proof, we will give two examples in the \textbf{Appendix} which illustrate how the proof works for certain types of non-classical period domains. One of these two examples will come from Watanabe's paper \cite{Wat08}.

Continuing with the notations of previous sections, we need a sequence of definitions and properties from \cite{KU08}.

\begin{definition}

\begin{description}
\item[(1) ]$\mathcal{V}:=$ $\{(A, V) |$  A is a $\mathbb{Q}$-linear subspace of $\mathfrak{g}_\mathbb{Q}$ consisting of mutually commutative nilpotent elements, $V$ is a submonoid of $A^*:=Hom_\mathbb{Q}(A, \mathbb{Q})$ such that $V\cap (-V)=\{0\}$ and $V\cup (-V)=A^*$\}.
\item[(2)]Given $(A,V)\in \mathcal{V}$, let $\mathcal{F}(A,V)$ be the set of rational nilpotent cones $\sigma \subset \mathfrak{g}_{\mathbb{R}}$ such that $\sigma_\mathbb{R}=A_\mathbb{R}$ and $(\sigma \cap A)^{\vee}:= \{h \in A^* | h(\sigma \cap A) \subset \mathbb{Q}_{\geq 0}\} \subset V$.
\end{description}

\end{definition}

\begin{definition}

\begin{description}
\item[(1) ] $\check{D}_{val}:=\{(A,V,Z)|(A,V)\in \mathcal{V}$ and $Z$ is an $exp(A_{\mathbb{C}})$-orbit in $\check{D}$\}
\item[(2) ] ${D}_{val}:=\{(A,V,Z)|(A,V,Z)\in \check{D}_{val}$  and there exists $\sigma \in \mathcal{F}(A,V)$ such that $Z$ is a $\sigma$-nilpotent orbit\}
\end{description}

\end{definition}

\begin{definition}

\begin{description}
\item[(1) ] Suppose $\Sigma$ is a fan in $\mathfrak{g}_{\mathbb{Q}}$. For $(A,V) \in \mathcal{V}$, let 
\begin{center}
    $X_{A,V,\Sigma}:=\{\sigma \subset \Sigma | \sigma \cap A_{\mathbb{R}} \in \mathcal{F}(A,V)\}$
\end{center}
By \cite{KU08}, whenever $X_{A,V,\Sigma}$ is not empty, there is a minimal element $\sigma_0 \in X_{A,V,\Sigma}$.  
\item[(2) ] Suppose $\Sigma$ is a fan in $\mathfrak{g}_{\mathbb{Q}}$, define:
\begin{center}
    $D_{\Sigma, val}:=\{(A,V,Z) \in \check{D}_{val} | X_{A,V,\Sigma}$ is non-empty, and $exp(\sigma_{0,\mathbb{C}})Z$ is a $\sigma_0$-nilpotent orbit\}
\end{center}
\item[(3) ] We say a fan $\Sigma$ in $\mathfrak{g}_{\mathbb{Q}}$ is \textbf{complete} with respect to $D$ if $D_{val} = D_{\Sigma, val}$.
\end{description}

\end{definition}

A complete fan $\Sigma$ for the period domain of given type, if it exists, should contain universal information about nilpotent cones coming from nilpotent orbits. When the maximal nilpotent orbits are all $1$-dimensional, the construction and verification of a complete fan is clear: we just take the union of all rational nilpotent elements which polarize some nilpotent orbits.

When there are nilpotent orbits of dimension greater than one, this means that two nilpotent cones could intersect at some interior points of both.  While one can subdivide a cone to eliminate finitely many such interior intersections, the problem is that completeness may force infinitely many interior intersections, so that ``no subdivision is enough''.  This phenomenon could present a fatal strike against the existence of a complete fan for these cases.  In the rest of this section, we will prove that this is what happens in many non-classical situations.

\bigskip
Suppose $e^{\sigma_{\mathbb{C}}} F^{\bullet}$ is a nilpotent orbit of type $\{h^{p,q}\}_{p+q=n}$ and dimension $k\geq 2$.  Here
\begin{equation}
    \sigma = \mathbb{R}_{\geq 0}<N_1, \ldots , N_k>\;\;\;\text{and}\;\;\;\sigma_{\mathbb{C}}=\mathbb{C}<N_1,\ldots, N_k>,
\end{equation}
where $\{N_i\}_{1\leq i \leq k}$ are commuting nilpotent elements.  For $\vec{v}:= (v_1, ... , v_k)\in \mathbb{Q}_{\geq 0}^k$, let $N_{\vec{v}}:=\sum_{i=1}^{k}v_iN_i$ and $\sigma_{\vec{v}}:=\mathbb{R}_{\geq 0}<N_{\vec{v}}>.$

After a rescaling, we may assume that $F^{\bullet}\in D$.  Notice that $F^{\bullet}$ induces a weight-$0$ Hodge structure on $\mathfrak{g}:= End(H,Q)$, which is represented by the Hodge filtration $\tilde{F}^{\bullet}$ on $\mathfrak{g}$ as:

\begin{equation}
    \{0\}\subset \tilde{F}^{n}\subset \tilde{F}^{n-1}\subset ... \subset \tilde{F}^{-n+1}\subset \tilde{F}^{-n} = \mathfrak{g_{\mathbb{C}}}
\end{equation}
where $\tilde{F}^{p}:=\{f\in End(H_{\mathbb{C}}, Q)|f(F^k)\subset F^{k+p})\}$. It is clear that all $N_i$ and $N_{\vec{v}}$ are in $\tilde{F}^{-1}$.  Let

\begin{equation}
C^h(N_{\vec{v}}):=\{M\in \mathfrak{g_{\mathbb{C}}} | [M,N_{\vec{v}}] \in \tilde{F}^{-1}\},
\end{equation}
\begin{equation}
C^h(\sigma) := \cap_{i=1}^k C^h(N_i).
\end{equation}
Then we have
\begin{equation}
    C^h(N_{\vec{v}}) = \ker(\mathrm{ad}(N_{\vec{v}}))+\tilde{F}^0
\end{equation}
and
\begin{equation}\label{eq4.6}
    C^h(\sigma) \subseteq  C^h(N_{\vec{v}}).
\end{equation}
For classical cases, the two sides in \textbf{\eqref{eq4.6}} are always equal. The next lemma concerns the case when \textbf{\eqref{eq4.6}} is a strict inclusion.

\begin{lemma}\label{lem4.4}
Suppose $exp(\sigma_{\mathbb{C}})F^{\bullet}$ is a monodromy nilpotent orbit, and for some $\vec{v} \in \mathbb{Q}_{> 0}^k$ (equiv. $N_{\vec{v}}\in int(\sigma)$), we have $C^h(\sigma) \subsetneqq C^h(N_{\vec{v}})$.  Then there exists $F_{\vec{v}}^{\bullet} \in \check{D}$ such that $exp(\mathbb{C}N_{\vec{v}}) F_{\vec{v}}^{\bullet}$ is a $\sigma_{\vec{v}}$-nilpotent orbit, but $exp(\sigma_{\mathbb{C}}) F_{\vec{v}}^{\bullet}$ is not horizontal.
\end{lemma}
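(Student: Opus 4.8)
The plan is to produce $F_{\vec v}^\bullet$ by perturbing the given reference point $F^\bullet$ in a direction that destroys horizontality for the full cone $\sigma$ while preserving the nilpotent-orbit property for the single ray $\sigma_{\vec v}$. Concretely, pick $M \in C^h(N_{\vec v}) \setminus C^h(\sigma)$, which exists by hypothesis. For small $\varepsilon$, set $F_{\vec v}^\bullet := \exp(\varepsilon M) F^\bullet$. The point $M$ can be taken in the real Lie algebra after possibly replacing it by $M + \bar M$ or $i(M-\bar M)$, since the defining condition $[M,N_{\vec v}]\in\tilde F^{-1}$ is about the single vector $N_{\vec v}$; one must check at least one of these real combinations still lies outside $C^h(\sigma)$, which follows because $C^h(\sigma)$ is defined by $\mathbb{R}$-rational conditions. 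Because $\exp(\varepsilon M)$ lies in $G_{\mathbb{C}}$ (or $G_{\mathbb{R}}$), $F_{\vec v}^\bullet$ is still a point of $\check D$ (of $D$ if $M$ is real and $\varepsilon$ small).

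First I would verify that $\exp(\mathbb{C}N_{\vec v}) F_{\vec v}^\bullet$ is a $\sigma_{\vec v}$-nilpotent orbit. Griffiths transversality $N_{\vec v}(F_{\vec v}^p)\subset F_{\vec v}^{p-1}$ is equivalent to $\exp(-\varepsilon M)N_{\vec v}\exp(\varepsilon M)$ preserving the decreasing shift structure of $F^\bullet$, i.e. to $\mathrm{Ad}(\exp(-\varepsilon M))N_{\vec v} \in \tilde F^{-1}$; expanding, this reduces to $[M,N_{\vec v}]\in\tilde F^{-1}$ (the higher-order brackets $[M,[M,N_{\vec v}]]$, etc., automatically stay in $\tilde F^{-1}$ once the first bracket does, since $M\in\tilde F^0$ for the horizontal part or one argues degree-by-degree using $\mathrm{ad}(M)\tilde F^p\subset\tilde F^p$ when $M\in\tilde F^0$, handling the $\ker\mathrm{ad}(N_{\vec v})$ component of $M$ separately as it commutes with $N_{\vec v}$). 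That is exactly the condition defining $C^h(N_{\vec v})$, so transversality holds. For the positivity condition, $\exp(zN_{\vec v})F_{\vec v}^\bullet = \exp(\varepsilon M)\exp(z\,\mathrm{Ad}(\exp(-\varepsilon M))N_{\vec v})F^\bullet$; since $\mathrm{Ad}(\exp(-\varepsilon M))N_{\vec v}$ is again nilpotent and, by transversality just checked, generates a nilpotent orbit through $F^\bullet$ (using that $N_{\vec v}\in\mathrm{int}(\sigma)$ polarizes the monodromy orbit, so small perturbations of it still polarize — here one invokes the openness of the nilpotent-orbit / polarization conditions, cf. the nilpotent orbit theorem and \cite{CK82}), we get $\exp(zN_{\vec v})F_{\vec v}^\bullet\in D$ for $\mathfrak{Im}(z)\gg0$, possibly after shrinking $\varepsilon$.

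Next I would show $\exp(\sigma_{\mathbb{C}})F_{\vec v}^\bullet$ is \emph{not} horizontal, meaning some $N_i$ fails $N_i(F_{\vec v}^p)\subset F_{\vec v}^{p-1}$. By the same computation as above, horizontality of $N_i$ at $F_{\vec v}^\bullet$ is equivalent to $\mathrm{Ad}(\exp(-\varepsilon M))N_i\in\tilde F^{-1}$, whose first-order term is $-\varepsilon[M,N_i]$; if $[M,N_i]\in\tilde F^{-1}$ for every $i$ then $M\in\bigcap_i C^h(N_i)=C^h(\sigma)$, contradicting the choice of $M$. Hence for at least one $i$, $[M,N_i]\notin\tilde F^{-1}$, and for small enough $\varepsilon\neq0$ the perturbation $\mathrm{Ad}(\exp(-\varepsilon M))N_i$ leaves $\tilde F^{-1}$ (the displacement is $-\varepsilon[M,N_i]$ plus $O(\varepsilon^2)$, and $\tilde F^{-1}$ is a linear subspace, so nonvanishing of the linear term forces exit for small $\varepsilon$). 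This gives the desired $F_{\vec v}^\bullet$.

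The main obstacle I anticipate is the rationality/reality bookkeeping: the cone $\sigma_{\vec v}$ and the orbit data should be honestly rational (or at least $F_{\vec v}^\bullet\in\check D$ with the orbit conditions over $\mathbb{C}$), yet $M$ a priori lives in $\mathfrak g_{\mathbb{C}}$. One needs to argue that a suitable real — and, if the statement is to be used to build genuinely new rational cones downstream, rational — representative $M$ can be chosen outside $C^h(\sigma)$; this uses that all the relevant subspaces $\tilde F^{-1}$, $\ker\mathrm{ad}(N_i)$, $C^h(\sigma)$, $C^h(N_{\vec v})$ are cut out by equations defined over $\mathbb{R}$ (indeed over $\mathbb{Q}$ for the Lie-algebra parts), so that a $\mathbb{C}$-point of the complement of $C^h(\sigma)$ inside $C^h(N_{\vec v})$ yields a $\mathbb{Q}$-point of the same complement by density. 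A secondary subtlety is making the two "for small $\varepsilon$" requirements — staying in $D$ for the positivity condition, and exiting $\tilde F^{-1}$ for the non-horizontality — compatible; since each holds on a nonempty open set of $\varepsilon$ near $0$ (the second on a punctured neighborhood), their intersection is nonempty, so this is routine once each is established.
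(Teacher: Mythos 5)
Your overall strategy (move the base point by $\exp$ of an element $M\in C^h(N_{\vec v})\setminus C^h(\sigma)$, then read off non-horizontality from $[M,N_i]\notin\tilde F^{-1}$) is the same as the paper's, and your non-horizontality step is fine. But the way you verify that the ray $\sigma_{\vec v}$ still gives a nilpotent orbit at $F_{\vec v}^{\bullet}$ has a genuine gap. Griffiths transversality $N_{\vec v}F_{\vec v}^p\subset F_{\vec v}^{p-1}$ is equivalent to $\mathrm{Ad}(\exp(-\varepsilon M))N_{\vec v}\in\tilde F^{-1}$, which is a \emph{closed} (equality-type) condition; a first-order computation cannot establish it, and your parenthetical treatment of the higher brackets does not work: writing $M=M_1+M_2$ with $M_1\in\ker\mathrm{ad}(N_{\vec v})$ and $M_2\in\tilde F^0$, the term $[M_1,[M_2,N_{\vec v}]]$ need not lie in $\tilde F^{-1}$ (nothing forces $M_1\in\tilde F^0$), and you cannot "handle the components separately'' because $\exp(\varepsilon M)\neq\exp(\varepsilon M_1)\exp(\varepsilon M_2)$ when $M_1,M_2$ do not commute. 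The positivity step has the same problem: invoking "openness of the nilpotent-orbit condition'' in the nilpotent direction $N$ is not justified (the weight filtration of $N$ is not stable under perturbation), so $e^{zN'}F^{\bullet}\in D$ for $N'=\mathrm{Ad}(\exp(-\varepsilon M))N_{\vec v}$ does not follow from what you have written.

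The fix — and this is exactly what the paper does — is to use the identity $C^h(N_{\vec v})=\ker(\mathrm{ad}\,N_{\vec v})+\tilde F^0$ together with $\tilde F^0\subseteq C^h(\sigma)$ to replace $M$ by its $\ker(\mathrm{ad}\,N_{\vec v})$-component: since $C^h(\sigma)$ is a linear subspace containing $\tilde F^0$, this component is still outside $C^h(\sigma)$, and one may further take it real (the kernel is defined over $\mathbb{Q}$, and a complex subspace cannot contain all real points of a real-defined space it does not contain) and small. With $[M,N_{\vec v}]=0$ exactly, $g=e^M\in G_{\mathbb{R}}$ commutes with $e^{zN_{\vec v}}$, so $e^{zN_{\vec v}}g^{-1}F^{\bullet}=g^{-1}e^{zN_{\vec v}}F^{\bullet}\in D$ for $\mathfrak{Im}(z)\gg0$ and transversality along the ray is automatic — no $\varepsilon$-perturbation, no openness claim. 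Your closing worry about rationality of $M$ is a non-issue for the lemma itself: only the ray $\sigma_{\vec v}=\mathbb{R}_{\geq0}\langle N_{\vec v}\rangle$ needs to be rational, and it already is; $M$ and $F_{\vec v}^{\bullet}$ need not be.
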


\begin{proof}
We may assume $M\in C^h(N_{\vec{v}})\setminus  C^h(\sigma)$ is real, commutes with $N_{\vec{v}}$, and is close to $\{0\}$.  So writing $g=e^M$, we have $e^{zN_{\vec{v}}}g^{-1}F^{\bullet}=g^{-1}e^{z N_{\vec{v}}}F^{\bullet}\in D$ for $\mathfrak{Im}(z)\gg 0$.  Setting $F_{\vec{v}}^{\bullet}:=g^{-1}F^{\bullet}$, $e^{\mathbb{C} N_{\vec{v}}}F^{\bullet}_{\vec{v}}$ is clearly a nilpotent orbit.

Since $M\notin C^h(\sigma)$, there exists $N'\in int(\sigma)$ such that $\mathrm{ad}(M)N'\notin \tilde{F}^{-1}$.  So $e^{z\mathrm{ad}(M)N'}F^{\bullet}=ge^{zN'}F_{\vec{v}}^{\bullet}$ hence $e^{zN'}F_{\vec{v}}^{\bullet}\,(\subset e^{\sigma_{\mathbb{C}}}F_{\vec{v}}^{\bullet})$ is not horizontal.
\end{proof}

\begin{remark}\label{rem4.5}
Given one $\vec{v}\in \mathbb{Q}_{>0}^k$ satisfying the hypothesis of the Lemma, ``almost all'' $\vec{v}'\in \mathbb{Q}_{>0}^k$ satisfy it.  More precisely, the $\{N_{\vec{v}'}\}$ satisfying the hypothesis are analytically dense in $\sigma$.  

To see this, note that $int(\sigma)$ is contained in the orbit of a subgroup $L\leq G_{\mathbb{R}}$ stabilizing $F^{\bullet}$ (and $\tilde{F}^{\bullet}$) \cite[Thm. 3.1]{KPR19}; so we can pick $g\in L$ with $\mathrm{Ad}(g)N_{\vec{v}}=N_{\vec{v}'}$.  With $M$ as in the Lemma's proof, set $M':=\mathrm{Ad}(g)M$.  Then we have $[M',N_{\vec{v}'}]=[\mathrm{Ad}(g)M,\mathrm{Ad}(g)N_{\vec{v}}]=\mathrm{Ad}(g)[M,N_{\vec{v}}]\in \mathrm{Ad}(g)\tilde{F}^{-1}=\tilde{F}^{-1}$ hence $M'\in C^h(N_{\vec{v}'})$.  On the other hand, we had $\mathrm{ad}(M)\sigma \not\subset \tilde{F}^{-1}$ in the proof, and $\mathrm{ad}(M')\sigma\not\subset \tilde{F}^{-1}$ is thus a nonempty Zariski open condition on $M'$, $g$, and $N_{\vec{v}}$.
\end{remark}

\begin{theorem}\label{thm4.6}
Suppose there is a $k \geq 2$-dimensional nilpotent orbit $e^{\sigma_{\mathbb{C}}}F^{\bullet}\subset \check{D}$ satisfying the assumption in \textbf{Lemma \ref{lem4.4}} (that is, that there exists a rational $N_{\vec{v}}\in int(\sigma)$ with $C^h(\sigma)\subsetneq C^h(N_{\vec{v}})$). Then no fan $\Sigma$ in $\mathfrak{g}_{\mathbb{Q}}$ is complete with respect to $D$.
\end{theorem}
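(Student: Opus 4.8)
The plan is to argue by contradiction. Suppose $\Sigma\subset\mathfrak{g}_{\mathbb{Q}}$ is complete with respect to $D$; I will exhibit two cones of $\Sigma$ that cannot both belong to a fan. The first task is to distill from the equality $D_{val}=D_{\Sigma,val}$ two concrete statements, obtained by unwinding the definitions of $\check{D}_{val}$, $\mathcal{F}(A,V)$, $X_{A,V,\Sigma}$ and the minimal cone $\sigma_0$. (a) For every nilpotent $N\in\mathfrak{g}_{\mathbb{Q}}$ and every $G^{\bullet}\in\check{D}$ with $e^{\mathbb{C}N}G^{\bullet}$ a nilpotent orbit, there is a cone $\sigma_0\in\Sigma$ with $N\in\sigma_0$ and $e^{\sigma_{0,\mathbb{C}}}G^{\bullet}$ a $\sigma_0$-nilpotent orbit: apply completeness with $A=\mathbb{Q}N$ and $V$ the half-line of $A^{*}$ nonnegative on $N$, so that $\mathbb{R}_{\geq 0}N\in\mathcal{F}(A,V)$ and $(A,V,e^{\mathbb{C}N}G^{\bullet})\in D_{val}$; the minimal cone then meets $\mathbb{R}N$ in $\mathbb{R}_{\geq 0}N$, so it contains $N$. (b) After a rescaling we may take $F^{\bullet}\in D$, so $e^{\sigma_{\mathbb{C}}}F^{\bullet}$ is itself a nilpotent orbit; fixing any $p\in\mathrm{int}(\sigma)\cap\mathbb{Q}^{k}$ and applying completeness with $A=\sigma_{\mathbb{R}}$ and $V=V_p$ a lexicographic valuation on $A^{*}$ ordering first by evaluation at $p$, one obtains $\psi_0\in\Sigma$ such that $\psi_0\cap\sigma_{\mathbb{R}}$ is $k$-dimensional (it must span $A_{\mathbb{R}}=\sigma_{\mathbb{R}}$) and contains $p$ (using $p\in\mathrm{int}(\sigma)$ and the ordering defining $V_p$).

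I would then choose the point $p$ carefully. Because $\Sigma$ has only countably many cones, the traces $\tau\cap\sigma_{\mathbb{R}}$ for $\tau\in\Sigma$ have only countably many facet hyperplanes inside $\sigma_{\mathbb{R}}$; by Remark \ref{rem4.5} the $\vec{v}$ satisfying the hypothesis of Lemma \ref{lem4.4} form the complement of a proper Zariski-closed subset of $\mathrm{int}(\sigma)$; and $\mathbb{Q}^{k}$ is not a countable union of proper affine subspaces. Hence there is a rational $\vec{v}\in\mathrm{int}(\sigma)$ for which $p:=N_{\vec{v}}$ satisfies Lemma \ref{lem4.4} and lies on none of those hyperplanes. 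For this $p$, the $k$-dimensional cone $\tau^{*}:=\psi_0\cap\sigma_{\mathbb{R}}$ has $p$ in its relative interior. Let $\psi$ be the smallest face of $\psi_0$ whose relative interior contains $p$. A facet of $\psi_0$ passing through $p$ must have its supporting functional vanishing on all of $\sigma_{\mathbb{R}}$ (otherwise $p$ would lie on a facet hyperplane of $\tau^{*}$), so $\tau^{*}\subseteq\psi$; in particular $\psi\in\Sigma$, $p\in\mathrm{relint}(\psi)$, and $\mathrm{int}(\tau^{*})\subseteq\psi$.

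Now apply Lemma \ref{lem4.4} to $\vec{v}$, yielding $F_{\vec{v}}^{\bullet}$ with $e^{\mathbb{C}N_{\vec{v}}}F_{\vec{v}}^{\bullet}$ a nilpotent orbit. Inspecting the proof of the Lemma, with $M$ its chosen element of $C^{h}(N_{\vec{v}})\setminus C^{h}(\sigma)$, every $N'\in\mathrm{int}(\sigma)$ with $\mathrm{ad}(M)N'\notin\tilde{F}^{-1}$ has $e^{\mathbb{C}N'}F_{\vec{v}}^{\bullet}$ non-horizontal; these $N'$ form the complement of a proper linear subspace of $\sigma_{\mathbb{R}}$, so some such $N'$ lies in the nonempty open set $\mathrm{int}(\tau^{*})\cap\mathrm{int}(\sigma)\subseteq\psi$, whence $e^{\psi_{\mathbb{C}}}F_{\vec{v}}^{\bullet}$ is not horizontal. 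On the other hand, (a) applied to $(N_{\vec{v}},F_{\vec{v}}^{\bullet})$ gives a cone $\sigma_0\in\Sigma$ with $N_{\vec{v}}\in\sigma_0$ and $e^{\sigma_{0,\mathbb{C}}}F_{\vec{v}}^{\bullet}$ a $\sigma_0$-nilpotent orbit, hence horizontal. But $\psi$ and $\sigma_0$ both lie in $\Sigma$ and both contain $p=N_{\vec{v}}\in\mathrm{relint}(\psi)$; by condition (ii) of the definition of a fan, $\psi\cap\sigma_0$ is a face of $\psi$ meeting $\mathrm{relint}(\psi)$, so $\psi\subseteq\sigma_0$. Then $e^{\psi_{\mathbb{C}}}F_{\vec{v}}^{\bullet}$ would be horizontal, a contradiction. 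Thus no fan in $\mathfrak{g}_{\mathbb{Q}}$ is complete with respect to $D$.

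The step I expect to be the real obstacle is (b): extracting from completeness a cone of $\Sigma$ whose intersection with $\sigma_{\mathbb{R}}$ is full-dimensional rather than merely a ray. A one-dimensional slice would be useless, because the element $M$ produced in the proof of Lemma \ref{lem4.4} lies in $C^{h}(N_{\vec{v}})$ and cannot witness non-horizontality along $\mathbb{R}N_{\vec{v}}$ itself; it is the additional directions of $\tau^{*}$, forced to span all of $\sigma_{\mathbb{R}}$, where the condition $M\notin C^{h}(\sigma)$ does bite, that drive the contradiction. The remaining technical point---the simultaneous genericity of $\vec{v}$ (rational, interior to $\sigma$, satisfying Lemma \ref{lem4.4}, and off countably many hyperplanes)---requires only a Baire-category observation over $\mathbb{Q}^{k}$ and introduces nothing new.
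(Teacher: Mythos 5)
Your overall architecture is the same as the paper's: apply completeness once to the $k$-dimensional orbit to extract a cone of $\Sigma$ whose trace in $\sigma_{\mathbb{R}}$ is full-dimensional, apply \textbf{Lemma \ref{lem4.4}} at an interior rational point to produce a one-dimensional orbit $e^{\mathbb{C}N_{\vec v}}F_{\vec v}^{\bullet}$ with $e^{\sigma_{\mathbb{C}}}F_{\vec v}^{\bullet}$ non-horizontal, apply completeness again to that ray, and let the fan axiom force a containment that contradicts non-horizontality. However, there is a genuine gap at the step where you choose $p$. You need $p$ to be \emph{rational} (it is the $N_{\vec v}$ required by the hypothesis of the theorem and by your step (a), where $A_{\vec v}=\mathbb{Q}N_{\vec v}$ must be a $\mathbb{Q}$-subspace), to satisfy the hypothesis of \textbf{Lemma \ref{lem4.4}}, and to avoid the facet hyperplanes of \emph{every} trace $\tau\cap\sigma_{\mathbb{R}}$, $\tau\in\Sigma$, because $\psi_0$ depends on $V_p$ and hence cannot be fixed before $p$. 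Your justification, ``$\mathbb{Q}^k$ is not a countable union of proper affine subspaces,'' is false: $\mathbb{Q}^k$ is countable, hence is such a union, and more to the point the relevant hyperplanes are rational linear hyperplanes in $\sigma_{\mathbb{R}}$, and a countable family of rational hyperplanes can cover all rational points of $\mathrm{int}(\sigma)$ when $k\geq 2$ (Kato--Usui fans are typically infinite, e.g.\ $\Gamma$-stable, and nothing in \textbf{Definition \ref{def3.1}} prevents a fan from subdividing along every rational ray of $\sigma$). Replacing $p$ by a Baire-generic irrational point does not repair this, since then neither the hypothesis of \textbf{Lemma \ref{lem4.4}} as used here nor the valuative triple $(\mathbb{Q}N_{\vec v},V_{\vec v},Z_{\vec v})$ in step (a) makes sense.

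The fix is to reverse the order of quantifiers, which is exactly what the paper's proof does: apply completeness with $A=\sigma_{\mathbb{Q}}$ and \emph{any} admissible $V\supseteq(\sigma\cap A)^{\vee}$ to obtain the minimal cone $\tau$; since $(\tau\cap A)^{\vee}$ and $(\sigma\cap A)^{\vee}$ both lie in $V$ (and, by \cite[Lem.\ 5.3.4]{KU08}, $\tau\cap A_{\mathbb{R}}$ contains interior points of $\tau$ and $Z$ is a $(\tau\cap A_{\mathbb{R}})$-nilpotent orbit), the cones $\tau$ and $\sigma$ share interior points; \emph{then} choose a rational $N_{\vec v}\in\mathrm{int}(\sigma)\cap\mathrm{int}(\tau)$ satisfying the hypothesis of \textbf{Lemma \ref{lem4.4}} via \textbf{Remark \ref{rem4.5}} --- here one only has to avoid a single proper analytically/Zariski-closed subset inside a nonempty open subset of $\sigma_{\mathbb{R}}$, which always contains rational points because $\sigma$ is rational. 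From that point on your concluding fan argument (the cone produced for the ray contains an interior point of $\tau$, hence contains $\tau$, hence $e^{\sigma_{\mathbb{C}}}F_{\vec v}^{\bullet}$ would be horizontal) coincides with the paper's and is correct; so the repaired argument is essentially the paper's proof rather than a genuinely different route.
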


\begin{proof}
Write $\sigma_{\mathbb{Q}}$ for the $\mathbb{Q}$-span of $\sigma$'s generators.  Put $A:=\sigma_{\mathbb{Q}}$, $Z:=e^{\sigma_{\mathbb{C}}}F^{\bullet}$, and choose $V\subset A^*$ containing $(\sigma\cap A)^{\vee}$.  Then $(A,V,Z)\in D_{val}$.

Suppose $\Sigma$ is complete; then $(A,V,Z)$ must belong to $D_{\Sigma,val}$.  So there exists a (minimal) $\tau\in \Sigma$ with $\tau\cap A_{\mathbb{R}}\in \mathcal{F}(A,V)$ ($\implies \tau_{\mathbb{C}}\supseteq \sigma_{\mathbb{C}}$), and $e^{\tau_{\mathbb{C}}}Z\,(=e^{\tau_{\mathbb{C}}}F^{\bullet})$ is a $\tau$-nilpotent orbit.  By \cite[Lem. 5.3.4]{KU08}, $Z$ is a $(\tau\cap A_{\mathbb{R}})$-nilpotent orbit and $\tau\cap A_{\mathbb{R}}$ contains interior points of $\tau$.  Moreover, since $(\tau\cap A)^{\vee}$ and $(\sigma\cap A)^{\vee}$ both belong to $V$, $\tau\cap A_{\mathbb{R}}$ and $\sigma$, hence $\tau$ and $\sigma$, have common interior points.

By the assumption and \textbf{Remark \ref{rem4.5}}, there must exist $N_{\vec{v}}\in (int(\sigma)\cap \sigma_{\mathbb{Q}})\cap int(\tau)$ which satisfies the hypothesis of \textbf{Lemma \ref{lem4.4}}.  By this Lemma, there must also exist $F^{\bullet}_{\vec{v}}\in \check{D}$ such that $Z_{\vec{v}}:=e^{\mathbb{C}N_{\vec{v}}}F_{\vec{v}}^{\bullet}$ is a $\sigma_{\vec{v}}$-nilpotent orbit, but $e^{\sigma_{\mathbb{C}}}F_{\vec{v}}^{\bullet}$ is not horizontal.  Writing $A_{\vec{v}}:=\mathbb{Q}<N_{\vec{v}}>\,(\cong \mathbb{Q})$ and $(\mathbb{Q}_{\geq 0}\cong)\,(\sigma_{\vec{v}}\cap A_{\vec{v}})^{\vee}=:V_{\vec{v}}\subset A_{\vec{v}}^*$, we have $(A_{\vec{v}},V_{\vec{v}},Z_{\vec{v}})\in D_{val}=D_{\Sigma,val}$.  So there exists a (minimal) $\tau_{\vec{v}}\in \Sigma$ such that $\tau_{\vec{v}}\cap A_{\vec{v},\mathbb{R}}=\sigma_{\vec{v}}$, and $e^{\tau_{\vec{v}},\mathbb{C}}Z_{\vec{v}}$ is a $\tau_{\vec{v}}$-nilpotent orbit.

Finally, since $\tau_{\vec{v}}$ contains an interior point of $\tau$, and both are in $\Sigma$, and $\Sigma$ is a fan, we must have $\tau_{\vec{v}}\supseteq \tau$.  Hence $e^{\tau_{\mathbb{C}}}Z_{\vec{v}}$ is horizontal.  But then $e^{\sigma_{\mathbb{C}}}F^{\bullet}_{\vec{v}}$ is horizontal, which is a contradiction.
\end{proof}

We now apply this result to some particular period domains.  Recall that the period domain of Hodge structures with Hodge numbers increasing toward the center, $h^{n,0}\leq h^{n-1,1}\leq \cdots\leq h^{n-\lfloor\frac{n}{2}\rfloor,\lfloor\frac{n}{2}\rfloor}$, always admit Hodge-Tate degenerations, see \cite[Chap. V]{GGR14}.  When this is the case, the maximal dimension of a nilpotent orbit in $D$ is the same as the maximal dimension of a variation of Hodge structure.\footnote{We may assume the limit mixed Hodge structure is $\mathbb{R}$-split, so that the group $G^{0,0}$ with Lie algebra $\mathfrak{g}^{0,0}$ in the Deligne bigrading is defined over $\mathbb{R}$.  Denoting the monodromy logarithm by $N$, we have $\mathrm{ad}(N)\colon \mathfrak{g}^{0,0}_{\mathbb{R}}\twoheadrightarrow \mathfrak{g}^{-1,-1}_{\mathbb{R}}$.  It follows that $(\mathrm{ad}\mathfrak{g}^{0,0}_{\mathbb{R}})N=\mathfrak{g}^{-1,-1}_{\mathbb{R}}$, and so $\mathrm{Ad}(G^{0,0}_{\mathbb{R}})N$ is an open cone in $\mathfrak{g}_{\mathbb{R}}^{-1,-1}$.  Now take any rational simplicial $\sigma$ contained in this cone, containing $N$, and abelian in $\mathfrak{g}_{\mathbb{R}}$.  The maximal (real) dimension of such a $\sigma$ (hence complex dimension of $\sigma_{\mathbb{C}}$) is the same as the maximal (complex) dimension of an abelian subalgebra of $\mathfrak{g}^{-1,1}$ in the Hodge decomposition of $\mathfrak{g}_{\mathbb{C}}$ at a point on the nilpotent orbit in $D$.}  We will actually show that for \emph{any} $N_{\vec{v}}$ in the interior of a corresponding maximal-dimensional $\sigma$, the hypothesis of \textbf{Lemma \ref{lem4.4}} holds.  (So we could have avoided \textbf{Remark \ref{rem4.5}} and made do with a weaker statement in \textbf{Theorem \ref{thm4.6}}.  The stronger statement is really only used in the \textbf{Appendix}, since it is convenient for explicit examples.)

For general $N \in \tilde{F}^{-1}$, 
\begin{equation}
    \mathrm{ad}(N): \tilde{F}^{-1}\rightarrow \tilde{F}^{-2}
\end{equation}
induces a natural map
\begin{equation}
    \varphi_N: \tilde{F}^{-1}/ \tilde{F}^{0}\rightarrow \tilde{F}^{-2}/\tilde{F}^{-1}.
\end{equation}
Therefore, $$\dim(ker(\varphi_N))\geq 2\dim(\tilde{F}^{-1})-\dim(\tilde{F}^{-2})-\dim(\tilde{F}^{0})=\dim\mathfrak{g}_{-1}-\dim\mathfrak{g}_{-2}$$ where $\mathfrak{g}_i:=\tilde{F}^{i}/\tilde{F}^{i+1}$. For period domains of Hodge type $h_{S} = (a, b, a), (b\geq a\geq 2)$ and $h_{CY3} = (1,m,m,1)$, which correspond to the Hodge type of surfaces of general type and Calabi-Yau 3-folds respectively, Carlson showed in \cite{Car86} that the maximal dimension of variation of Hodge structures, which are equivalent to maximal dimension of monodromy nilpotent orbits, are $\lfloor\frac{1}{2}ab\rfloor$ and $m$, respectively. 

So let $\sigma$ be a nilpotent cone of maximal dimension (of Hodge-Tate type), and $N\in int(\sigma)$.  For Hodge type $h_S$ and $h_{CY3}$, (dim$\mathfrak{g}_{-1}$, dim$\mathfrak{g}_{-2}$) are $(ab,\tfrac{a(a-1)}{2})$ and $(\tfrac{m(m+3)}{2}, m)$ respectively. We have $ab-\tfrac{a(a-1)}{2}> \lfloor\frac{1}{2}ab\rfloor$ for $b\geq a$ and $\tfrac{m(m+3)}{2}-m> m$ for $m\geq 2$.  So in these cases, we conclude that 
\begin{equation}\label{eq4.9}
\dim(ker(\varphi_N))>\dim(\sigma_{\mathbb{C}}).
\end{equation}
But by maximal-dimensionality of $\sigma$, we have $C^h(\sigma)\cap \tilde{F}^{-1}=\sigma_{\mathbb{C}}+\tilde{F}^0$.  Thus, \eqref{eq4.9} shows that $\sigma_{\mathbb{C}}+\tilde{F}^0\subsetneq ker(\varphi_{N})+\tilde{F}^0 (\subset C^h(N)\cap \tilde{F}^{-1})$, so that $C^h(\sigma)\cap \tilde{F}^{-1}\subsetneq C^h(N)\cap \tilde{F}^{-1}$ and the hypothesis of \textbf{Lemma \ref{lem4.4}} holds.  Thus we have proved the following theorem:

\begin{theorem}\label{thm4.7}
For period domains $D$ of type $h=(a,b,a), b\geq a\geq 2$ and type $h = (1, m, m, 1), m\geq 2$, there is no complete fan.
\end{theorem}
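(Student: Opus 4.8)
The plan is to derive Theorem \ref{thm4.7} as a direct application of Theorem \ref{thm4.6}, so the entire task reduces to exhibiting, for each of the two families of Hodge types, a nilpotent orbit $e^{\sigma_{\mathbb{C}}}F^{\bullet}$ of dimension $k\geq 2$ together with a rational $N_{\vec{v}}\in \mathrm{int}(\sigma)$ for which $C^h(\sigma)\subsetneq C^h(N_{\vec{v}})$. First I would invoke the existence of Hodge–Tate degenerations for period domains with Hodge numbers increasing toward the center (\cite[Chap. V]{GGR14}), which applies to $h=(a,b,a)$ with $b\geq a\geq 2$ and to $h=(1,m,m,1)$ with $m\geq 2$; this guarantees a nilpotent orbit whose limiting MHS is Hodge–Tate, and (by the footnoted argument) the maximal dimension of such an orbit equals the maximal dimension of a VHS, which by Carlson \cite{Car86} is $\lfloor\tfrac12 ab\rfloor$ and $m$ respectively. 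I would fix $\sigma$ to be a rational, simplicial, maximal-dimensional cone of this Hodge–Tate type, pick $F^{\bullet}\in D$ on the orbit (after rescaling), and pass to the induced weight-$0$ Hodge filtration $\tilde{F}^{\bullet}$ on $\mathfrak{g}_{\mathbb{C}}$.

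Next I would carry out the dimension count that forces the strict inclusion. For any $N\in\tilde{F}^{-1}$ the map $\mathrm{ad}(N)\colon\tilde{F}^{-1}\to\tilde{F}^{-2}$ descends to $\varphi_N\colon \tilde{F}^{-1}/\tilde{F}^0\to\tilde{F}^{-2}/\tilde{F}^{-1}$, so $\dim\ker(\varphi_N)\geq \dim\mathfrak{g}_{-1}-\dim\mathfrak{g}_{-2}$ where $\mathfrak{g}_i:=\tilde{F}^i/\tilde{F}^{i+1}$. I would then record the values $(\dim\mathfrak{g}_{-1},\dim\mathfrak{g}_{-2})=(ab,\tfrac{a(a-1)}{2})$ for $h_S$ and $(\tfrac{m(m+3)}{2},m)$ for $h_{CY3}$ — these come from the standard description of the Hodge decomposition of $\mathfrak{g}_{\mathbb{C}}=\mathfrak{so}$ or $\mathfrak{sp}$ at a Hodge-generic point — and check the elementary inequalities $ab-\tfrac{a(a-1)}{2}>\lfloor\tfrac12 ab\rfloor$ for $b\geq a\geq 2$ and $\tfrac{m(m+3)}{2}-m>m$ for $m\geq 2$. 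These give $\dim\ker(\varphi_N)>\dim\sigma_{\mathbb{C}}$ for \emph{every} $N\in\mathrm{int}(\sigma)$.

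Then I would translate this into the hypothesis of Lemma \ref{lem4.4}. By maximality of $\dim\sigma$ one has $C^h(\sigma)\cap\tilde{F}^{-1}=\sigma_{\mathbb{C}}+\tilde{F}^0$ (no abelian nilpotent subalgebra inside $\mathfrak{g}^{-1,1}$ strictly containing the one spanned by $\sigma$), whereas $\ker(\varphi_N)+\tilde{F}^0\subseteq C^h(N)\cap\tilde{F}^{-1}$; since $\dim(\ker(\varphi_N)+\tilde{F}^0)=\dim\ker(\varphi_N)+\dim\tilde{F}^0>\dim\sigma_{\mathbb{C}}+\dim\tilde{F}^0$, the inclusion $\sigma_{\mathbb{C}}+\tilde{F}^0\subsetneq\ker(\varphi_N)+\tilde{F}^0$ is strict, hence $C^h(\sigma)\subsetneq C^h(N_{\vec{v}})$ for any rational $N_{\vec{v}}\in\mathrm{int}(\sigma)$. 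Applying Theorem \ref{thm4.6} with this $\sigma$ and $N_{\vec{v}}$ yields that no fan in $\mathfrak{g}_{\mathbb{Q}}$ is complete with respect to $D$, which is exactly the statement of Theorem \ref{thm4.7}.

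The routine parts here are the two arithmetic inequalities and citing Carlson and the Hodge–Tate existence theorem; the step I expect to be the genuine obstacle — and the one deserving the most care — is justifying that the maximal-dimensional $\sigma$ can be taken rational, simplicial, abelian, and of Hodge–Tate type simultaneously, and that in that situation $C^h(\sigma)\cap\tilde{F}^{-1}$ really equals $\sigma_{\mathbb{C}}+\tilde{F}^0$ rather than something larger. This rests on the footnoted observation that $\mathrm{ad}(N)\colon\mathfrak{g}^{0,0}_{\mathbb{R}}\twoheadrightarrow\mathfrak{g}^{-1,-1}_{\mathbb{R}}$ at an $\mathbb{R}$-split limit MHS, so that $\mathrm{Ad}(G^{0,0}_{\mathbb{R}})N$ is open in $\mathfrak{g}^{-1,-1}_{\mathbb{R}}$ and one can slide a rational simplicial abelian cone into it; pinning down that a maximal such cone saturates $C^h$ in the horizontal direction is where one must be precise, and I would lean on the identification of maximal abelian nilpotent subalgebras of $\mathfrak{g}^{-1,1}$ used implicitly in \cite{Car86} and \cite{KPR19}.
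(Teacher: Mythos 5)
Your proposal is correct and follows essentially the same route as the paper: Hodge–Tate degenerations from \cite{GGR14}, Carlson's bounds $\lfloor\tfrac12 ab\rfloor$ and $m$ for the maximal cone dimension, the dimension count $\dim\ker(\varphi_N)\geq\dim\mathfrak{g}_{-1}-\dim\mathfrak{g}_{-2}$ forcing $\sigma_{\mathbb{C}}+\tilde{F}^0\subsetneq\ker(\varphi_N)+\tilde{F}^0\subset C^h(N)\cap\tilde{F}^{-1}$, and then Lemma \ref{lem4.4} together with Theorem \ref{thm4.6}. The point you flag as delicate — that a maximal-dimensional cone can be taken rational, simplicial, abelian, of Hodge–Tate type, and that maximality gives $C^h(\sigma)\cap\tilde{F}^{-1}=\sigma_{\mathbb{C}}+\tilde{F}^0$ — is exactly what the paper's footnoted argument supplies.
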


\noindent In the \textbf{Appendix}, we will provide two explicit examples illustrating this failure.

\medskip

After Watanabe provided the example in \cite{Wat08} which disproved the complete fan conjecture, Kato and Usui modified the conjecture and expected the modified versions to be true. Generally speaking, the modified conjecture requires the definition of a fan to be replaced by a weak fan which is defined in \textbf{Remark \ref{rem3.2}}, as well as slight modifications on the definition of completeness, see \cite[Sec. 7.3]{KNU10} for more details. This modified conjecture remains open.

\medskip

One can not provide a universal compactification for images of period maps in a nonclassical $\Gamma\backslash D$ by simply taking the union of all nilpotent cones and their $\Gamma$-orbits, because of the failure of complete fan conjecture. However, for a certain variation of Hodge structure coming from some geometric family, it is still possible to construct a $\Gamma$-strongly compatible fan based on given (finitely generated) monodromy information. In the following section, we will take a close look on a specific family coming from a family of Complete Intersection Calabi-Yau (CICY) threefolds studied by Hosono and Takagi. We will make use of computational results in \cite{HT14} and \cite{HT18} and show how to construct a $\Gamma$-strongly compatible fan $\Sigma$ for some properly chosen $\Gamma$ in this case, which, as far as the author knows, is the first such construction for a non-classical family of dimension $>1$.

\section{A toric complete intersection family}\label{S5}

In \cite{HT14} and \cite{HT18}, Hosono and Takagi studied a family of generic Complete Intersection Calabi-Yau (CICY) manifolds in a product of projective spaces. Regarding the general constructions of such families as well as their geometric and arithmetic properties, one may consult \cite{CDLS88} or \cite{AAGGL16}. In this paper, we will investigate this family further.

\medskip

The family is given by complete intersection Calabi-Yau manifolds of the form:

\begin{equation}
X:= \begin{pmatrix}
\mathbb{P}^4 & | & 1 & 1 & 1 & 1 & 1\\
\mathbb{P}^4 & | & 1 & 1 & 1 & 1 & 1
\end{pmatrix} ^{(2,52)} 
\end{equation} 
which is an intersection of $5$ general $(1,1)$-divisors in $\mathbb{P}^4 \times \mathbb{P}^4$ and has Hodge numbers $(h^{1,1},h^{2,1})=(2,52)$. The $A$-structure on the even cohomology of this family is investigated in \cite{HT14}. In \cite{HT18}, the mirror family $\mathcal{X}^*$, whose middle cohomology which describes the corresponding $B$-structure, is constructed as follows. 

Define the following special family
\begin{equation}
X_{sp}:= \{z_iw_i+az_iw_{i+1}+bz_{i+1}w_i=0, i= 1,...,5\},
\end{equation}
where $z_i, w_j$ are coordinates of two $\mathbb{P}^4$ respectively, and the indexes $i, j$ are considered as elements in $\mathbb{Z}_5$.

\begin{prop}\label{prop5.1}
\textbf{\cite[Prop. 3.11]{HT18}} For general values of $a, b$, we have the following properties:
\begin{enumerate}
    \item $X_{sp}$ is singular along 20 lines with singularity of $A_1$ type.
    \item There exists a crepant resolution $X^{*} \rightarrow X_{sp}$ with $X^{*}$ being a Calabi-Yau threefold with $h^{1,1}(X^{*})=52, h^{2,1}(X^{*})=2$.
    \item The resolution parametrized by $(a,b) \in \mathbb{C}^2$ gives a family over the space $U=\bar{\mathcal{M}}_{X^*}^{cpx} \backslash Dis $ where $\bar{\mathcal{M}}_{X^*}^{cpx} = \mathbb{P}^2$, and $Dis = D_1 \bigcup D_2 \bigcup D_3 \bigcup Dis_0$ where $D_i$ are coordinate hyperplanes and $Dis_0$ is an irreducible nodal curve of degree 5. 
\end{enumerate}
\end{prop}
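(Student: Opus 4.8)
The plan is to treat $X_{sp}$ as a degeneracy locus and analyze it by explicit, equivariant computation. First I would put the defining equations in matrix form: writing equation $i$ as linear in $w$, the five forms assemble into a cyclic bidiagonal $5\times5$ matrix $A(z)$ with diagonal entries $z_i+bz_{i+1}$ and cyclic superdiagonal entries $az_i$ (indices in $\mathbb{Z}_5$), so that $X_{sp}=\{(z,w):A(z)w=0\}$; reading the equations as linear in $z$ gives dually $X_{sp}=\{\tilde A(w)z=0\}$, where $\tilde A(w)$ is $A$ with $a\leftrightarrow b$, $z\leftrightarrow w$. Equivalently $X_{sp}$ is the zero scheme of a section of $\mathcal O_{\mathbb P^4\times\mathbb P^4}(1,1)^{\oplus5}$ of this special cyclic shape. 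The two projections exhibit $X_{sp}$ over the quintic $\{\det A(z)=0\}=\{\prod_i(z_i+bz_{i+1})+a^5\prod_i z_i=0\}$ (resp. the analogous quintic in $w$), with fibre $\mathbb P(\ker A(z))$. This is the structural input for everything, and the group generated by the cyclic shift $i\mapsto i+1$ and the involution $(z,a)\leftrightarrow(w,b)$ acts on the whole picture; I would use it throughout to cut down casework. For (1), the Jacobian criterion for a section of $\mathcal O(1,1)^{\oplus5}$ says $X_{sp}$ is singular exactly where the fibre of a projection jumps, i.e.\ where $A(z)$ (resp.\ $\tilde A(w)$) drops to corank $\ge2$. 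So I would compute the ideal of $4\times4$ minors of the cyclic bidiagonal matrix, reduce to a few representative minors by symmetry, and show its zero locus in each $\mathbb P^4$ is a finite set of points; over each such point the kernel is $2$-dimensional and cuts out a line $\ell\times\{w_0\}$ or $\{z_0\}\times\ell$ in $X_{sp}$, and the symmetry/count argument produces exactly $20$ of them. Then along each line I would take a transverse slice, compute its Hessian, and show it has maximal rank, so the transverse singularity is the ordinary surface node $A_1=\mathbb C^2/\pm1$; the crucial extra point is to check no line carries a further-degenerate slice, again a finite symmetry-reduced check.

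For (2), the transverse singularities being Du Val of type $A_1$, blowing up the reduced singular locus produces a resolution $X^*\to X_{sp}$; I would verify smoothness (same ``no worse point on a line'' input) and crepancy ($A_1$ has discrepancy $0$), so $K_{X^*}=0$ and $X^*$ is again Calabi--Yau, with $h^{1,0}=h^{2,0}=0$ and simple connectivity preserved under resolution of canonical singularities. For the Hodge numbers I would compute $\chi(X^*)$ by a stratification/degeneration argument — $X_{sp}$ is a special member of the linear system of five $(1,1)$-divisors, so $\chi(X_{sp})$ is obtained from the generic smooth CICY $X$ by the local $A_1$-line contributions, and each blown-up line replaces a $\mathbb P^1$ ($\chi=2$) by a ruled exceptional surface ($\chi=4$) — and then pin down $h^{1,1}(X^*)=52$ from the classes of the $20$ exceptional divisors together with the restriction of $\mathrm{Pic}(\mathbb P^4\times\mathbb P^4)$, whence $h^{2,1}(X^*)=2$ from $\chi$. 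As an independent check, realizing $X^*$ as the Batyrev--Borisov (or Greene--Plesser-type) mirror of the CICY $X$ with $(h^{1,1},h^{2,1})=(2,52)$ predicts precisely $(52,2)$.

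For (3), I would homogenize the parameters: the coefficients of the three term-types $z_iw_i$, $z_iw_{i+1}$, $z_{i+1}w_i$ give, after quotienting by the residual coordinate torus and overall scaling, a $\mathbb P^2$ of such cyclic complete intersections, which is the claimed $\bar{\mathcal M}^{\mathrm{cpx}}_{X^*}=\mathbb P^2$. The discriminant $Dis$ is where the analysis of (1)--(2) breaks down: where the corank-$\ge2$ points collide, or where a new singular point of $X_{sp}$ appears so that $X^*$ acquires conifold or worse points. I would compute $Dis$ as a resultant of the defining data and show it factors as $D_1\cup D_2\cup D_3\cup Dis_0$, with $D_1,D_2,D_3$ the three coordinate lines — each making the equations reducible, e.g.\ $a=0$ gives $\{w_i(z_i+bz_{i+1})=0\}$ — and $Dis_0$ the remaining factor; then I would verify $\deg Dis_0=5$, irreducibility, and that its only singularities are nodes, by computing the local Milnor numbers of the discriminant polynomial along $Dis_0$.

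I expect the main obstacle to be the explicit singularity bookkeeping in (1) — proving there are exactly $20$ lines and that the transverse type is uniformly $A_1$ with nothing worse at special points — together with the discriminant factorization in (3). Both are elementary in principle but genuinely laborious; the efficient route is to exploit the cyclic-plus-involution symmetry aggressively to reduce to a handful of representative cases, and, where hand computation of minors or resultants becomes unwieldy, to fall back on a Gröbner-basis computation.
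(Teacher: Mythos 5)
First, a point of comparison: this paper does not prove the proposition at all --- it is quoted verbatim from Hosono--Takagi \cite[Prop. 3.11]{HT18} and used as input --- so your proposal is an attempt at an independent proof rather than a variant of an argument appearing here. Your determinantal frame (the cyclic bidiagonal $A(z)$, the dual $\tilde A(w)$, the fibration over $\{\det A(z)=0\}$, and the cyclic-plus-involution symmetry) is the right structural setup and close in spirit to how Hosono--Takagi analyze the family, and parts (1) and (3) are indeed finite, symmetry-reducible computations. One small caveat on (1): the Jacobian of the complete intersection at $(z,w)$ is the $5\times 10$ block matrix $[\tilde A(w)\,|\,A(z)]$, so a priori singular points can also occur over corank-one points at which the left kernels of $A(z)$ and $\tilde A(w)$ coincide; excluding this for general $a,b$ is an extra check, not an instance of ``singular iff the fibre jumps.''

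The genuine gap is in (2), at the step producing $h^{1,1}(X^*)=52$. Your stated sources of classes --- the restriction of $\mathrm{Pic}(\mathbb{P}^4\times\mathbb{P}^4)$ (rank $2$) plus one exceptional divisor for each of the $20$ blown-up lines of transverse $A_1$ singularities --- give at most $22$ independent classes, so the count as described cannot reach $52$. The missing classes must come either from additional exceptional divisors of the crepant resolution (e.g.\ over special points where the $20$ lines meet and the transverse singularity is worse than $A_1$, in which case blowing up the reduced singular locus need not even be smooth or crepant as you assert) or from a nontrivial defect of the singular threefold $X_{sp}$, i.e.\ a class group strictly larger than the ambient restriction; neither is visible in a local computation at a generic point of a line, and this is precisely the hard content of the Hodge-number claim. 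The same issue infects the Euler-characteristic route: $\chi(X_{sp})$ is not obtained from $\chi(X)$ by generic $A_1$-line contributions alone once the lines are in special position, and even with $\chi(X^*)$ in hand you still need one of $h^{1,1},h^{2,1}$ independently --- which is exactly the step that fails. Finally, the mirror-symmetry ``independent check'' is circular in this context: the identification of $X^*$ as the mirror of $X$ with invariants $(52,2)$ is what the proposition underwrites, so it cannot serve as evidence for it.
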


The nodal curve $Dis_0$ contains $6$ self-intersection nodes, and has one fifth-order tangency with each coordinate hyperplane, labeled $t_1,t_2,t_3$. Denote the intersection points of coordinate hyperplanes by $o_1, o_2, o_3$.  (We refer to \cite[Fig. 6.1]{HT14} for a picture of the discriminant.) 

Around each singular point of the discriminant locus, there is a canonical integral symplectic structure arising from the solution of local Picard-Fuchs equations. More precisely, denote the family as $\pi: \mathcal{X}^* \rightarrow B$, and fix a meromorphic section of $R^0\pi_*\Omega^3_{\mathcal{X}^*}$, namely $\Omega$. Locally, the map $b\in B \mapsto \vec{V_b}:= (\int_{\gamma_i(b)}\Omega_b)$ gives a basis of local solutions to the Picard-Fuchs equation for some (integral, symplectic) basis ${\gamma_i(b)}$ of $H_3(X_b,\mathbb{Q})$. We also have the action by local monodromy operators on the period matrix $\vec{V_b}$. There are many references in the literature, for example, \cite{HLY95}.

Let $b_0$ be a smooth point near the boundary point $o_1$, and denote by $T_x,T_y$ the monodromy operators on $H^3(X_{b_0}^*,\mathbb{Q})$ given by small loops (based at $b_0$) about $x=0$ and $y=0$.  According to \cite{HT18}, there is an ordered integral (symplectic) basis $\{\alpha_0, \alpha_1, \alpha_2, \beta_2, \beta_1, \beta_0\} \subset H^3(X^*_{b_0},\mathbb{Q})$ in terms of which the (dual) intersection matrix is given by
\small
\begin{equation}\label{eq5.3}
    Q:=
\begin{pmatrix}
0 & 0 & 0 & 0 & 0 & -1\\
0 & 0 & 0 & 0 & -1 & 0\\
0 & 0 & 0 & -1 & 0 & 0\\
0 & 0 & 1 & 0 & 0 & 0\\
0 & 1 & 0 & 0 & 0 & 0\\
1 & 0 & 0 & 0 & 0 & 0
\end{pmatrix},
\end{equation}
\normalsize
and the monodromy operators become
\small
\begin{equation}
    T_x=
\begin{pmatrix}
1 & -1 & 0 & 5 & 3 & 5\\
0 & 1 & 0 & -10 & -5 & -2\\
0 & 0 & 1 & -10 & -10 & -5\\
0 & 0 & 0 & 1 & 0 & 0\\
0 & 0 & 0 & 0 & 1 & 1\\
0 & 0 & 0 & 0 & 0 & 1
\end{pmatrix},\;\;\;
T_y=
\begin{pmatrix}
1 & 0 & -1 & 3 & 5 & 5\\
0 & 1 & 0 & -10 & -10 & -5\\
0 & 0 & 1 & -5 & -10 & -2\\
0 & 0 & 0 & 1 & 0 & 1\\
0 & 0 & 0 & 0 & 1 & 0\\
0 & 0 & 0 & 0 & 0 & 1
\end{pmatrix}.
\end{equation}
\normalsize
The discriminant locus $Dis_0$ touches each coordinate line at a fifth-order tangent point, which provides an $A_9$ singularity. Resolving any one of these singularities by blowing up $5$ times, we obtain a sequence of exceptional divisors $E_i$, $i=1,...,5$. By \cite[Prop. 4.6]{HT18}, the monodromy matrix corresponding to $E_1$ is given by:
\small
\begin{equation}
    T_{E_1}=
\begin{pmatrix}
1 & 0 & 0 & 0 & 0 & 0\\
0 & 1 & 0 & 0 & 50 & 0\\
0 & 0 & 1 & 0 & 0 & 0\\
0 & 0 & 0 & 1 & 0 & 0\\
0 & 0 & 0 & 0 & 1 & 0\\
0 & 0 & 0 & 0 & 0 & 1
\end{pmatrix}
\end{equation}
\normalsize
\begin{lemma}\label{lem5.2}
For $i=1,...,5$, up to constants, all $T_{E_i}$ have the same unilpotent monodromy logarithm.
\end{lemma}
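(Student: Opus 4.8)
The plan is to reduce the claim to a statement about how the blow-up map acts on the monodromy logarithms, and then to exploit the explicit shape of $T_{E_1}$. First I would recall the geometric setup: the discriminant curve $Dis_0$ has a fifth-order tangency with a coordinate line, giving an $A_9$ singularity, which is resolved by blowing up five times to produce exceptional divisors $E_1,\dots,E_5$. Choosing local coordinates $(u,v)$ around the tangency point so that $Dis_0$ is $\{v=u^5\}$ (say) and the coordinate line is $\{v=0\}$, the successive blow-ups introduce divisors whose classes in the local fundamental group can be expressed in terms of the two original loop classes $\gamma_u,\gamma_v$ about $\{u=0\}$ and $\{v=0\}$. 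The key point is that the loop around each new exceptional divisor $E_i$ is homotopic (in the complement) to a product of the original loops — concretely, if $N_u=\log T_u$ and $N_v=\log T_v$ are the two monodromy logarithms at the tangency point, then each $N_{E_i}$ equals a fixed positive-integer combination of $N_u$ and $N_v$, and in fact the combinations one gets from the $A_9$ resolution are all positive multiples of a single primitive vector because the resolution of $\{v=u^5\}\cup\{v=0\}$ has a linear (type $A$) dual graph with each exceptional curve meeting the (strict transforms of the) two branches in a controlled, colinear way.

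The concrete route I would take: identify $T_{E_1}$ as given in the excerpt, compute $N_{E_1}=\log T_{E_1}$ (which is immediate since $T_{E_1}-I$ is already nilpotent of index $2$, so $N_{E_1}=T_{E_1}-I$), and observe it is a rank-one nilpotent matrix — the single nonzero entry being the $50$ in the $(2,5)$ slot. Then, rather than recomputing each $T_{E_i}$ from scratch, I would argue that the monodromy representation, restricted to the local $\pi_1$ of the complement of the resolved configuration near $t_1$, factors through the abelianization (since all the $T_{E_i}$ and $T_u,T_v$ commute — they are simultaneous local monodromies along a normal-crossing configuration), so each $T_{E_i}$ is determined by the class $[E_i]\in H_1$ of its loop expressed in the basis $\{[u=0],[v=0]\}$ dual to $\{N_u,N_v\}$. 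The combinatorics of the $A_9$ resolution then forces $[E_i]=c_i\,[u=0]+d_i\,[v=0]$ with $(c_i,d_i)$ all proportional to the same primitive vector — equivalently, all $N_{E_i}$ are positive rational multiples of a common nilpotent $N$, which up to scaling is the primitive generator of the ray they span. Since $N_{E_1}=50\cdot(\text{something primitive})$ and all $N_{E_i}$ share that ray, "up to constants all $T_{E_i}$ have the same unipotent monodromy logarithm."

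The main obstacle will be justifying the combinatorial claim that the loop classes $[E_1],\dots,[E_5]$ around the five exceptional divisors of the $A_9$ resolution all lie on a single ray in $H_1$ — i.e. that the resolution, which a priori could split the monodromy into several independent directions, in fact produces colinear monodromy vectors. This should follow from the standard description of the minimal resolution of a curve singularity of type $\{v=u^5\}$ transverse-ish to a line: the exceptional set is a chain $E_1-E_2-\cdots-E_5$ (dual graph of type $A_5$), and tracking which branch (the line $\{v=0\}$ versus the curve $\{v=u^5\}$) each $E_i$ meets, together with the relations $[E_i]=\sum(\text{multiplicities})$ coming from the total transform of $u$ and $v$, pins down each $(c_i,d_i)$ — and one checks these are all scalar multiples of $(1,0)$ or of one fixed vector, consistently with $T_{E_1}$ being supported on a single Jordan block. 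I would double-check this against $T_{E_1}$: the fact that $N_{E_1}$ is rank one and equal to $50$ times a primitive integer matrix is exactly the signature of all $E_i$ sharing one monodromy direction (if they spanned a $2$-plane, intermediate blow-ups would produce logarithms pointing in a genuinely new direction, contradicting commutativity forcing everything into the abelian quotient here). Once the colinearity is established, the rest — reading off that $\log T_{E_i}=N_{E_i}=T_{E_i}-I$ and that these are mutual scalar multiples — is a one-line verification.
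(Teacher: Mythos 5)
Your combinatorial bookkeeping matches what the paper ultimately finds (the multiplicities of the two total transforms along the five exceptional curves are $(i,i)=i\,(1,1)$, so the meridian \emph{classes} are colinear in $H_1$), but there is a genuine gap in how you pass from homology classes to the operators $T_{E_i}$. You assert that the restriction of the monodromy representation to the local fundamental group near $t_1$ factors through its abelianization ``since all the $T_{E_i}$ and $T_u,T_v$ commute --- they are simultaneous local monodromies along a normal-crossing configuration.'' Meridians of components of a normal-crossing divisor only commute (for suitable representatives) when the components actually meet; after the five blow-ups the dual graph is a chain with the two strict transforms attached at one end, so for instance $E_1$ is disjoint from both branches, and no commutation is available. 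Before blowing up, the local complement of $\{v(v-u^5)=0\}$ has non-abelian $\pi_1$ (the group of the $(2,10)$ torus link, $\langle a,b\mid (ab)^5=(ba)^5\rangle$), and blowing up does not change the complement; so nothing forces the representation to be abelian there, and the $H_1$-class of a meridian of $E_i$ does not determine $T_{E_i}$. Your closing remark (``if they spanned a $2$-plane \dots contradicting commutativity forcing everything into the abelian quotient'') presupposes the same abelianization and is circular. A secondary issue: even granting $T_{E_i}=T_u^{c_i}T_v^{d_i}$, writing $N_{E_i}=c_iN_u+d_iN_v$ uses $[N_u,N_v]=0$, which is likewise not justified (and not needed: multiples of the single logarithm of $T_uT_v$ would suffice).

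What must replace the abelianization step is a relation valid in the fundamental group itself, and this is exactly what the paper's proof supplies. It proves Sublemma \ref{sl6.1}: for consecutive rays $v_1=(a,1)$, $v_2=(b,-1)$, $v_3=(1,0)$ of a smooth two-dimensional fan, the meridians of the three invariant divisors can be represented by explicit loops inside the open orbit $(\mathbb{C}^*)^2$, whose $\pi_1\cong\mathbb{Z}^2$ \emph{is} abelian, yielding the operator identity $T_3^{a+b}=T_1T_2$ for any monodromy representation. Applying this at each node of the toric resolution of the $A_9$ point (Figure 6.2) gives the chain of honest relations $T_1^2=T_2$, $T_2^2=T_1T_3$, $\dots$, $T_4^2=T_3T_5$, $T_5=T_4T_{E_1}T_y$, from which the unipotent logarithms are immediately proportional (e.g.\ the relations force $T_k=T_1^{\,k}$), with no hypothesis that the local representation is abelian. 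Equivalently one could invoke Mumford-style plumbing relations for the resolved configuration; some such $\pi_1$-level input is the missing idea in your proposal, and once it is in place your multiplicity computation and the observation that $T_{E_1}-I$ is rank one become consistency checks rather than the proof.
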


\begin{proof}
See \textbf{Section \ref{S6}}.
\end{proof}

By the above lemma, it is enough to consider $T_{E_1}$ for the purpose of computing monodromy logarithms of all $T_{E_i}$. We use the letter ``$N$'' to replace ``$T$'' with the same sub-index to denote the corresponding unipotent monodromy logarithm (that is, the logarithm of the unipotent part). For instance, $T_x=T_x^{ss}e^{N_x}$ where $T_x^{ss}$ represents the semisimple part.

\medskip

The blown-up discriminant locus has normal-crossing points at $x=y=0$ and at a point on the $y$-axis over the 5th-order tangency.  Denote the resulting monodromy cones by
\begin{equation}
\sigma_1:=<N_x, N_y>\;\;\text{and}\;\; \sigma_2:=<N_y, N_{E_1}>.
\end{equation}
In addition, there are $6$ self-intersection nodes on the conifold curve $Dis_0$, which are all ordinary double points. Choosing $2$ of these nodes, together with paths from points near the nodes to $b_0$, we may write the corresponding local monodromy nilpotent cones with respect to our basis at $b_0$, viz.
\begin{equation}
\sigma_3:=<N_{p_1,1}, N_{p_1,2}>, \;\;\;\sigma_4:=<N_{p_3,1}, N_{p_3,2}>.
\end{equation}
By \cite[Prop. 6.6-6.8]{HT14}, the monodromies are given explicitly (all in the same basis) by

\tiny
\begin{equation}\label{eq5.8}
N_x=
\begin{pmatrix}
0 & -1 & 0 & 0 & \frac{1}{2} & \frac{25}{6}\\
0 & 0 & 0 & -10 & -5 & \frac{1}{2}\\
0 & 0 & 0 & -10 & -10 & 0\\
0 & 0 & 0 & 0 & 0 & 0\\
0 & 0 & 0 & 0 & 0 & 1\\
0 & 0 & 0 & 0 & 0 & 0
\end{pmatrix},
N_y=
\begin{pmatrix}
0 & 0 & -1 & \frac{1}{2} & 0 & \frac{25}{6}\\
0 & 0 & 0 & -10 & -10 & 0\\
0 & 0 & 0 & -5 & -10 & \frac{1}{2}\\
0 & 0 & 0 & 0 & 0 & 1\\
0 & 0 & 0 & 0 & 0 & 0\\
0 & 0 & 0 & 0 & 0 & 0
\end{pmatrix},
\end{equation}

\begin{equation}\label{eq5.9}
N_{E_1}=
\begin{pmatrix}
0 & 0 & 0 & 0 & 0 & 0\\
0 & 0 & 0 & 0 & 50 & 0\\
0 & 0 & 0 & 0 & 0 & 0\\
0 & 0 & 0 & 0 & 0 & 0\\
0 & 0 & 0 & 0 & 0 & 0\\
0 & 0 & 0 & 0 & 0 & 0
\end{pmatrix}.
\end{equation}

\begin{equation}\label{eq5.10}
N_{p_1,1}=
\begin{pmatrix}
5 & 0 & 5 & 10 & 25 & -25\\
-5 & 0 & -5 & -10 & -25 & 25\\
-2 & 0 & -2 & -4 & -10 & 10\\
1 & 0 & 1 & 2 & 5 & -5\\
0 & 0 & 0 & 0 & 0 & 0\\
1 & 0 & 1 & 2 & 5 & -5
\end{pmatrix},
N_{p_1,2}=
\begin{pmatrix}
5 & 5 & 0 & 25 & 10 & -25\\
-2 & -2 & 0 & -10 & -4 & 10\\
-5 & -5 & 0 & -25 & -10 & 25\\
0 & 0 & 0 & 0 & 0 & 0\\
1 & 1 & 0 & 5 & 2 & -5\\
1 & 1 & 0 & 5 & 2 & -5
\end{pmatrix}.
\end{equation}

\begin{equation}\label{eq5.11}
N_{p_3,1}=
\begin{pmatrix}
40 & 30 & 0 & 50 & 10 & -100\\
-4 & -3 & 0 & -5 & -1 & 10\\
-20 & -15 & 0 & -25 & -5 & 50\\
0 & 0 & 0 & 0 & 0 & 0\\
12 & 9 & 0 & 15 & 3 & -30\\
16 & 12 & 0 & 20 & 4 & -40
\end{pmatrix},
N_{p_3,2}=
\begin{pmatrix}
40 & 0 & 30 & 10 & 50 & -100\\
-20 & 0 & -15 & -5 & -25 & 50\\
-4 & 0 & -3 & -1 & -5 & 10\\
12 & 0 & 9 & 3 & 15 & -30\\
0 & 0 & 0 & 0 & 0 & 0\\
16 & 0 & 12 & 4 & 20 & -40
\end{pmatrix}.
\end{equation}
\normalsize
Moreover, the cones $\sigma_i, i=1,2,3,4$ and their $Ad(Sp(6,\mathbb{Z}))$-orbits give all information about the monodromy of this family, see \cite[Prop 6.8(d)]{HT14}.  (In particular, the other $8$ cones are in this orbit.)  We denote the monodromy group of this family by $\Gamma_0$. Suppose $\Gamma\leq Sp(6, \mathbb{Z})$ is a subgroup of finite index. We consider all rational nilpotent cones coming from the boundary points of this family, as well as their $Ad(\Gamma)$-orbits in $\mathfrak{g}_{\mathbb{Q}}$, and denote this collection of nilpotent cones by $\Sigma$. We consider the space $X_{\Sigma}=\Gamma\backslash D_{\Sigma}$ defined in Section 4. To fit the definition, we need to prove $\Sigma$ is indeed a fan; i.e., it should satisfy conditions \textbf{(i), (ii)} in \textbf{Definition \ref{def3.1}}. Notice that if $\Sigma$ is proved to be a fan, its $\Gamma$-strong compatibility will be automatically guaranteed.

\begin{theorem}\label{thm5.3}
There exists a finite-index subgroup $\Gamma\leq Sp(6,\mathbb{Z})$, such that the cones $\sigma_i, i=1,2,3,4$ and their $Ad(\Gamma)$-orbits glue up to a well-defined fan $\Sigma$ in $\mathfrak{g}_{\mathbb{Q}}=\mathfrak{sp}(6,\mathbb{Q})$, and it is also strongly compatible with $\Gamma$.
\end{theorem}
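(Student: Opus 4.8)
The goal is to produce a finite-index $\Gamma\leq Sp(6,\mathbb{Z})$ for which the family of cones $\sigma_1,\dots,\sigma_4$ together with all their $Ad(\Gamma)$-orbits (and faces) forms a genuine fan.  The plan is to first reduce the problem to a finite, purely combinatorial check, and then to pass to a small enough $\Gamma$ to kill all the bad intersections that occur at the level of the full monodromy group $\Gamma_0$.  Concretely, I would proceed in the following steps.

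First I would record the structural data of each $\sigma_i$: compute the weight filtration $W(\sigma_i)_\bullet$ (equivalently the rank of $N$ and of $N^2$ for $N\in \mathrm{int}(\sigma_i)$, and the type of the limiting mixed Hodge structure), and the $1$-dimensional faces (the rays $\mathbb{R}_{\geq 0}N_x$, $\mathbb{R}_{\geq 0}N_y$, $\mathbb{R}_{\geq 0}N_{E_1}$, and the four rays generated by the $N_{p_i,j}$), using the explicit matrices \eqref{eq5.8}--\eqref{eq5.11}.  The key observation, which should fall out of this computation, is that each $\sigma_i$ is $2$-dimensional and \emph{simplicial}, so its only faces are $\{0\}$, the two generating rays, and $\sigma_i$ itself; hence condition \textbf{(i)} of \textbf{Definition \ref{def3.1}} (closure under faces) for $\Sigma$ is automatic once we throw in all these rays and $\{0\}$.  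I would also note that, because the $\sigma_i$ are maximal-dimensional monodromy cones of Hodge--Tate type (cf.\ the discussion following \textbf{Theorem \ref{thm4.6}}), two of them can only share an interior point in the obstructed way analyzed in Section \ref{S4} if they literally coincide; so the real content is controlling which cones in the $Ad(\Gamma)$-orbit coincide, and showing distinct ones meet only along common faces.

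Second I would analyze pairwise intersections.  There are two sources of potential failure of condition \textbf{(ii)}: (a) two cones in the $Ad(\Gamma)$-orbit of a \emph{single} $\sigma_i$ intersecting badly, and (b) a cone in the orbit of $\sigma_i$ intersecting one in the orbit of $\sigma_j$, $i\neq j$.  For (b), I would compute a discrete invariant of the pair --- e.g.\ the commutator $[N,N']$ for generators $N\in\sigma_i$, $N'\in\sigma_j$, or the conjugacy type of the weight filtration --- and observe that distinct $\sigma_i$ (or their generic interior elements) lie in different $Sp(6,\mathbb{Q})$-conjugacy classes, so no element of $Ad(\Gamma_0)$ can carry a ray of one into the interior of another; this already handles the cross terms over $\mathbb{Z}$, and a fortiori over any $\Gamma\leq\Gamma_0$.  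For (a), the issue is a ``paramodular''-type phenomenon: $\gamma\cdot\sigma_i$ and $\sigma_i$ may share a face or an interior point even when $\gamma\neq 1$.  Here I would use the standard device (as in the Baily--Borel/toroidal theory and in \cite[Chap. 0]{KU08}) of replacing $\Gamma_0$ (or $Sp(6,\mathbb{Z})$) by its principal congruence subgroup $\Gamma(m)$ of some level $m$: for a suitable $m$, any $\gamma\in\Gamma(m)$ that stabilizes a face of $\sigma_i$ in fact fixes a rational generator of that face, and any $\gamma\in\Gamma(m)$ with $\gamma\sigma_i\cap\sigma_i$ containing a common interior point satisfies $\gamma\sigma_i=\sigma_i$ --- this is the ``neatness at the boundary'' input.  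Because there are only finitely many $\sigma_i$ and each has a finitely generated face lattice, finitely many congruence conditions suffice, so such an $m$ exists.

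Third, having fixed $\Gamma:=\Gamma(m)\cap\Gamma_0$ (or $\Gamma(m)$ itself, intersected with the monodromy group if one wants the period map to land in it), I would assemble the verification that $\Sigma:=\bigcup_i Ad(\Gamma)\sigma_i \cup \{\text{faces}\}$ satisfies both axioms: \textbf{(i)} by the simpliciality remark above, and \textbf{(ii)} by the pairwise analysis, noting that an intersection $\sigma\cap\sigma'$ of two cones in $\Sigma$ is a rational polyhedral cone contained in the union of proper faces unless $\sigma=\sigma'$, whence it \emph{is} a common face.  Finally, strong compatibility is free once $\Sigma$ is a fan: the generators $N_x,N_y,N_{E_1},N_{p_i,j}$ are (up to the rescaling in \textbf{Lemma \ref{lem5.2}}) logarithms of unipotent elements of $Sp(6,\mathbb{Z})$, and after the base change making $T_x,T_y,\dots$ unipotent, each $\exp(N)$ for a generating ray lies in $\Gamma(\sigma)=\Gamma\cap\exp(\sigma)$ once we have arranged $\Gamma\ni\exp(N)$; shrinking $m$ (passing to a multiple) if necessary guarantees the relevant unipotents lie in $\Gamma$, so each $\sigma\in\Sigma$ is generated by $\log(\Gamma(\sigma))$ as required by \textbf{Definition \ref{def3.3}(II)}.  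The main obstacle I anticipate is step two case (a): pinning down an explicit level $m$ and proving the ``stabilizer of a face fixes the face pointwise / interior overlap forces equality'' statement for $Sp(6,\mathbb{Z})$ acting on these specific cones --- this requires understanding $\Gamma_{\sigma_i}$, the stabilizer of each cone, concretely enough to separate it from its unipotent radical and apply a congruence argument, and it is where the bulk of the genuine work (and the appeal to \cite{KU08}'s machinery on $D_{\Sigma,val}$ and minimal cones) will lie.
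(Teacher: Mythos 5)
Your reduction rests on two claims that are false for this particular family, and they are exactly the points where the real work lies. First, maximal dimensionality of the $\sigma_i$ does \emph{not} force two cones sharing an interior point to coincide: this is precisely the phenomenon that Section \ref{S4} and the Appendix exploit to disprove the complete fan conjecture, where explicit pairs of maximal-dimensional nilpotent cones (in types $(2,2,2)$ and $(1,2,2,1)$) are constructed meeting along a common interior ray. (Also, $\sigma_3,\sigma_4$ are not of Hodge--Tate type; their interiors carry LMHS type $\mathrm{I}_2$.) Second, the cones are not separated by $Sp(6,\mathbb{Q})$-conjugacy: $\sigma_3$ and $\sigma_4$ are conjugate by an explicit $P\in Sp(6,\mathbb{Q})$, and $\sigma_1,\sigma_2$ have interiors of the same LMHS type $\mathrm{IV}_2$, so your case (b) does not dispose of the cross-intersections between $\sigma_1$ and $\mathrm{Ad}(\Gamma)\sigma_2$, or between $\sigma_3$ and $\mathrm{Ad}(\Gamma)\sigma_4$. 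The LMHS-type invariant only separates the pair $\{\sigma_1,\sigma_2\}$ from $\{\sigma_3,\sigma_4\}$; the remaining cross cases require explicit verification, which the paper carries out (e.g.\ the computation leading to $(a-c)(a^2-5ac+c^2)=1$ for $\sigma_1$ versus $\mathrm{Ad}(Sp(6,\mathbb{Z}))\sigma_2$, and the conjugation by $P=P_3P_1^{-1}$ for $\sigma_3$ versus $\sigma_4$).

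For your case (a), the appeal to a principal congruence subgroup does not close the argument. Passing to finite index can only eliminate offending elements cut out by finitely many congruence conditions, whereas a priori a single cone could have \emph{infinitely many} interior intersections with its $Sp(6,\mathbb{Z})$-orbit; the paper flags exactly this issue (``what really needs to be shown is that $Sp(6,\mathbb{Z})$-orbits do not create infinitely many interior intersections in a single cone''), and there is no general neatness theorem in the non-classical setting supplying your ``interior overlap forces equality'' statement. The actual proof shows by direct computation that there are \emph{no} bad elements at all: any $M\in Sp(6,\mathbb{Z})$ producing an interior intersection must preserve the common weight filtration, only its Levi factor (an $SL(2,\mathbb{Z})$ block) acts on $Gr^W_{-2}\mathfrak{g}$, and writing out the resulting rank/Diophantine conditions forces $M$ to act trivially on the cone for $\sigma_1,\sigma_2$; for $\sigma_3,\sigma_4$ the same conclusion is reached after conjugating to a standard form by $P_1\in Sp(6,\mathbb{Z})$ and $P_3\in Sp(6,\mathbb{Q})$. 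The congruence subgroup $\Gamma=\Gamma(48^2)$ enters only to repair the non-integrality of $P_3$ (so that $P_i^{-1}\Gamma P_i$ and $P_1^{-1}\Gamma P_3$ lie in $Sp(6,\mathbb{Z})$), not as a general neatness device. Your observations that closure under faces is easy for these simplicial cones and that strong compatibility follows once $\Sigma$ is a fan (powers of the unipotent monodromies lie in $\Gamma$) are correct, but the heart of the theorem --- ruling out all interior intersections over $Sp(6,\mathbb{Z})$ by the explicit matrix analysis --- is missing from your plan.
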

\begin{proof}
See \textbf{Section \ref{S6}}.  The main point is to rule out infinitely many interior intersections between a given cone $\sigma$ and the $\mathrm{Ad}(\Gamma)$-orbits of both $\sigma$ and the other cones.
\end{proof}

Notice $\Gamma\bigcap \Gamma_0$ is a finite-index subgroup of $\Gamma_0$. Up to passing to a finite cover we can assume $\Gamma_0\leq \Gamma$. It is clear from \textbf{Theorem \ref{thm5.3}} that the subcollection $\Sigma_0\subset \Sigma$ obtained by taking $Ad(\Gamma_0)$-conjugates of monodromy nilpotent cones is a $\Gamma_0$-strongly compatible fan.  Denote by $\widetilde{\mathbb{P}^2}$ the result of the blow-ups at the $\{p_i\}$ together with the finite cover just mentioned, and $\tilde{U}\subset\widetilde{\mathbb{P}^2}$ the complement of the discriminant's preimage.  Then by Kato-Usui's construction we have an extended period map 
\begin{equation}\label{eq5.12}
\widetilde{\mathbb{P}^2}\rightarrow \Gamma_0\backslash D_{\Sigma_0}
\end{equation}
into a partial compactification, restricting on $\tilde{U}$ to the period map for (a base-change of) the family in \textbf{Proposition \ref{prop5.1}}.

To prove this compactification construction is non-trivial, in the sense that $\widetilde{\mathbb{P}^2} \rightarrow \Gamma_0\backslash D_{\Sigma_0}$ does not factor through (a partial compactification of) any proper Mumford-Tate subdomain, we need to show the geometric variation is "generic" enough.  (For example, if it did factor through a subdomain of classical type, the compactification in \eqref{eq5.12} would be essentially the toroidal compactification of Mumford et al; while if it factored through a product domain for direct sums of Hodge structures of type $(1,1,1,1)$ and $(0,1,1,0)$, one could take a product of partial compactifications.)  We will only state the key fact here.  More details including the proof will be presented in \textbf{Section \ref{S7}}.

\begin{theorem}\label{thm5.4}
The geometric variation associated to the one in \textbf{Proposition \ref{prop5.1}} is Mumford-Tate generic, i.e. its image in $D$ contains (a class represented by) a Mumford-Tate generic point (Definition \ref{def7.1}), meaning the compactification given in \eqref{eq5.12} can not factor through a partial compactification of a proper Mumford-Tate subdomain of $D$.
\end{theorem}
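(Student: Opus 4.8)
\textbf{Proof proposal for Theorem \ref{thm5.4}.}

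The plan is to exhibit an explicit point in the image of the period map and verify that its Mumford--Tate group is as large as possible, namely all of $Sp(6)$ (equivalently $Sp(H_{\mathbb{Q}},Q)$). Concretely, I would first record the Hodge-theoretic definition of Mumford--Tate genericity (Definition \ref{def7.1}): a point $F^{\bullet}\in D$ is MT-generic if its MT group equals the generic Mumford--Tate group of the whole period domain $D$, which for the full polarized Hodge domain of weight $3$ with Hodge numbers $(1,2,2,1)$ is $Sp(6,\mathbb{Q})$. Since the MT group of the variation is constant off a meagre (countable union of proper analytic) subset of $B$, and is contained in $Sp(6)$ everywhere, it suffices to find one point where it is not contained in any proper $\mathbb{Q}$-algebraic subgroup of $Sp(6)$ that is the generic MT group of a sub-VHS.

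The key computational step is to use the monodromy data already assembled. The monodromy group $\Gamma_0$ is generated (up to finite index and the semisimple parts) by the unipotents $\exp(N_x),\exp(N_y),\exp(N_{E_1})$ and the $N_{p_i,j}$, with explicit matrices \eqref{eq5.8}--\eqref{eq5.11} in a fixed symplectic basis. By a theorem of Deligne--Andr\'e (the algebraic monodromy group is a normal subgroup of the derived MT group, cf. also Andr\'e's ``theorem of the fixed part''), the connected algebraic monodromy group $\mathbf{H}$ of the VHS is contained in the derived group of the generic MT group $\mathbf{M}$, and in a MT-generic situation these coincide; more usefully, if $\mathbf{H}=Sp(6)$ then already $\mathbf{M}\supseteq Sp(6)$, forcing $\mathbf{M}=Sp(6)$ and hence MT-genericity at the generic point. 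So I would compute the $\mathbb{Q}$-Zariski closure of $\langle \exp(N_x),\exp(N_y),\exp(N_{E_1}),\exp(N_{p_1,1}),\ldots\rangle$ by passing to Lie algebras: show that the Lie subalgebra $\mathfrak{h}\subseteq\mathfrak{sp}_6$ generated by $N_x,N_y,N_{E_1},N_{p_i,j}$ (and their iterated brackets) is all of $\mathfrak{sp}_6$. Since $\mathfrak{sp}_6$ has dimension $21$, this is a finite rank computation: take successive brackets of the listed nilpotents, and check the span reaches $21$. (One should first confirm $\mathfrak{h}$ is semisimple --- it is, being generated by nilpotents inside the simple $\mathfrak{sp}_6$ and acting irreducibly on $H_{\mathbb{Q}}$, which one checks from the matrices --- and then that the only semisimple subalgebra of $\mathfrak{sp}_6$ acting irreducibly on the standard $6$-dimensional representation with the required Hodge-type grading and of dimension $\geq$ the computed span is $\mathfrak{sp}_6$ itself; the irreducibility of the $\Gamma_0$-action on $H_{\mathbb{Q}}$ already rules out the reducible factorizations through products of $(1,1,1,1)\oplus(0,1,1,0)$-type sub-Hodge structures mentioned in the remark preceding the theorem.)

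The main obstacle I anticipate is the representation-theoretic bookkeeping rather than the bracket computation: after establishing $\mathfrak{h}=\mathfrak{sp}_6$ one still must translate ``algebraic monodromy is $Sp(6)$'' into ``generic MT group is $Sp(6)$,'' which requires invoking that $\mathbf{H}\trianglelefteq\mathbf{M}^{\mathrm{der}}$ and that no proper reductive $\mathbb{Q}$-subgroup of $Sp(6)$ both contains $Sp(6)$ and is the MT group of a polarized weight-$3$ Hodge structure of the given type --- which is immediate once $\mathbf{H}=Sp(6)$. I would then conclude: the set of MT-generic points in $B$ is the complement of a countable union of proper closed analytic subsets (the Noether--Lefschetz--type loci), hence nonempty and dense; in particular the image of the period map contains a MT-generic point, so by the functoriality of Kato--Usui's construction the extended map \eqref{eq5.12} cannot factor through the partial compactification $\Gamma'_0\backslash D'_{\Sigma'_0}$ attached to any proper MT subdomain $D'\subsetneq D$, since such a factorization would confine the image of the period map to $D'$ and hence bound every MT group by the (proper) generic MT group of $D'$ --- contradicting MT-genericity. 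Routine verifications (that the listed $N$'s really lie in $\mathfrak{sp}_6$ for the form \eqref{eq5.3}, that their brackets are computed correctly, that irreducibility of $H_{\mathbb{Q}}$ as a $\Gamma_0$-module holds) are deferred to Section \ref{S7}.
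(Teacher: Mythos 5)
Your route is genuinely different from the paper's. You propose to show directly that the connected algebraic monodromy group is all of $Sp(6)$ by checking that the Lie algebra generated by the explicit logarithms $N_x,N_y,N_{E_1},N_{p_i,j}$ (and iterated brackets) is the full $21$-dimensional $\mathfrak{sp}_6$, and then to invoke Andr\'e/Deligne ($\bar{\Gamma}^{\mathbb{Q}}\trianglelefteq M^{\mathrm{Der}}$) to force the generic Mumford--Tate group to be $Sp(6)$; the paper instead never computes the monodromy group, but starts from the Han--Robles classification of the possible Mumford--Tate groups for Hodge type $(1,2,2,1)$ (compact torus, $SL(2)\times SL(2)$, $U(2,1)$, $SL(2)\times Sp(4)$, $Sp(6)$) and eliminates each proper candidate by cheap degeneration data: nontrivial monodromy kills the torus, the $\mathrm{I}_1$/$\mathrm{I}_2$ LMHS types of $\sigma_3,\sigma_4$ kill the tensor case, the Hodge--Tate type cone $\sigma_1$ kills $U(2,1)$, and the existence of $2$-strings for $N_x-N_y$ (and $[(-5)\pm\sqrt{21}]N_x+2N_y$) kills $SL(2)\times Sp(4)$. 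Your approach buys a stronger conclusion (full algebraic monodromy, hence MT-genericity without needing any classification of Hodge representations), while the paper's buys much lighter computations at the price of citing the classification. Two cautions if you carry your plan out: the entire computational content --- the rank-$21$ bracket span and the irreducibility of the $\Gamma_0$-representation --- is deferred rather than done, and it is exactly this check that replaces the paper's string/LMHS arguments, so it must actually be performed; and the Lie algebra generated by only the seven listed nilpotents could a priori be smaller than $\mathrm{Lie}(\bar{\Gamma}^{\mathbb{Q}})$ (the closure also sees semisimple parts and the $Ad$-conjugate cones at the other eight boundary points), so if your span falls short of $21$ you should enlarge the generating set by such conjugates before concluding anything. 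The final step, passing from a MT-generic point to non-factorization of the extended map \eqref{eq5.12} through a proper MT subdomain, is the same in both arguments.
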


\section{Proof of Lemma 5.2 and Theorem 5.3}\label{S6}

\subsection{Proof of Lemma 5.2}

Consider the smooth toric variety $W$ given by the fan in $\mathbb{R}^2$ depicted in Figure 6.1,
\begin{equation*}
\begin{tikzpicture}[scale=1]
\node at (0,0) {};
\draw [-stealth](0,0) -- (1,0) [line width = 1pt];
\node at (2,0) {$v_3=(1,0)$};
\draw [-stealth](0,0) -- (1.5,1) [line width = 1pt];
\node at (1.7,1.2) {$v_1=(a,1)$};
\draw [-stealth](0,0) -- (2,-1) [line width = 1pt];
\node at (2.2,-1.2) {$v_2=(b,-1)$};
\end{tikzpicture}
\end{equation*}
\begin{center}
    \textbf{Figure 6.1:} The fan of $W$.
\end{center}
which includes the cones $\sigma_{13},\sigma_{23}$ bounded by the rays generated by the $\{v_i\}$.  According to the orbit-cone correspondence, denote by $D_i\subset W$ the torus-invariant divisor corresponding to $v_i$. Note that $W$ may be viewed as a resolution of the singular toric variety associated to the cone $\sigma_{12}$, with exceptional divisor $D_3$ of self-intersection $D_3\cdot D_3=-(a+b)$.

Writing $x,y$ for the toric coordinates on $W\setminus \cup D_i=(\mathbb{C}^*)^2$, we have divisors $(x)=aD_1+bD_2+D_3$ and $(y)=D_1-D_2$.  Consider the maps from a punctured disk to $(\mathbb{C}^*)^2$ given by $\mu_1\colon t\mapsto (t^a,t)$, $\mu_2\colon t\mapsto (t^b,t^{-1})$, and $\mu_3\colon t\mapsto (t,1)$.  Evidently these extend to maps from the full disk to $W$, with images $C_i$ satisfying $C_i\cdot D_j=\delta_{ij}$.  It follows that loops $\gamma_i\in \pi_1((\mathbb{C}^*)^2)$ about $D_1$, $D_2$ and $D_3$ respectively are given by $\theta\mapsto (e^{2\pi i\theta},e^{2\pi i\theta})$, $(e^{2\pi i b \theta},e^{-2\pi i \theta})$, and $(e^{2\pi i \theta},1)$.  Let $T_i$ denote their images under some (e.g. monodromy) representation.

\begin{sublemma}\label{sl6.1}
We have $T_3^{a+b}=T_1T_2$.
\end{sublemma}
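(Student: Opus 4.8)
The plan is to reduce the claimed identity of \textbf{Sublemma \ref{sl6.1}} to a single relation in the abelian group $\pi_1((\mathbb{C}^*)^2, b_0)\cong\mathbb{Z}^2$ and then push that relation forward along the representation in play. Write $[\gamma_i]$ for the class of the loop about $D_i$ in $\pi_1((\mathbb{C}^*)^2)$, where $(\mathbb{C}^*)^2=W\setminus\cup_iD_i$ carries the toric coordinates $(x,y)$. I claim $[\gamma_1]+[\gamma_2]=(a+b)[\gamma_3]$. Granting this, and using that $\pi_1((\mathbb{C}^*)^2)$ is abelian — so that $\gamma_1\gamma_2$ and $\gamma_3^{\,a+b}$, having equal images in $\mathbb{Z}^2$, are literally equal as elements of $\pi_1$, with no basepoint or conjugacy ambiguity — applying any representation $\rho$ (in particular the monodromy one) gives $T_1T_2=\rho(\gamma_1\gamma_2)=\rho(\gamma_3^{\,a+b})=T_3^{a+b}$.

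To pin down the three classes I would use that $(x,y)\colon(\mathbb{C}^*)^2\to(\mathbb{C}^*)^2$ is the identity, hence $(x_*,y_*)\colon\pi_1((\mathbb{C}^*)^2)\xrightarrow{\ \sim\ }\pi_1(\mathbb{C}^*)\times\pi_1(\mathbb{C}^*)=\mathbb{Z}^2$, together with the standard fact that for a small loop $\gamma_i$ around a general point of $D_i$ one has $x_*[\gamma_i]=\mathrm{ord}_{D_i}(x)$ and $y_*[\gamma_i]=\mathrm{ord}_{D_i}(y)$, the vanishing orders along $D_i$ (locally $x$ is a unit times $u^{\mathrm{ord}}$, and the loop winds that many times about $0$). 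From the divisor identities $(x)=aD_1+bD_2+D_3$ and $(y)=D_1-D_2$ recorded above — equivalently, directly from the parametrizations $\mu_1(t)=(t^a,t)$, $\mu_2(t)=(t^b,t^{-1})$, $\mu_3(t)=(t,1)$ via $C_i\cdot D_j=\delta_{ij}$ — one reads off $[\gamma_1]=(a,1)=v_1$, $[\gamma_2]=(b,-1)=v_2$, $[\gamma_3]=(1,0)=v_3$.

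The claim is then immediate: $[\gamma_1]+[\gamma_2]=(a,1)+(b,-1)=(a+b,0)=(a+b)(1,0)=(a+b)[\gamma_3]$. As a sanity check on signs and orientations, this is exactly the classical toric-surface relation $v_{i-1}+v_{i+1}=-(D_i\cdot D_i)\,v_i$ for consecutive rays, specialized at $D_3$ with $D_3\cdot D_3=-(a+b)$. I do not expect any genuine obstacle here: the whole argument is a lattice computation in $\mathbb{Z}^2$ plus the dictionary between loops around toric divisors and orders of vanishing of monomials, and the only point that needs care is the bookkeeping that $T_1$, $T_2$, $T_3$ are all images of loops in the \emph{same} $(\mathbb{C}^*)^2=W\setminus\cup_iD_i$ under one representation — which is precisely how they were set up — so that the single identity of loops in its abelian fundamental group transfers to the asserted identity of operators.
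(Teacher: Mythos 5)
Your proof is correct and follows essentially the same route as the paper: identify $\pi_1((\mathbb{C}^*)^2)\cong\mathbb{Z}^2$, read off $[\gamma_1]=(a,1)$, $[\gamma_2]=(b,-1)$, $[\gamma_3]=(1,0)$ (the paper does this via the disks $\mu_i$ with $C_i\cdot D_j=\delta_{ij}$, you via orders of vanishing of $x,y$ along $D_i$ — the same data), and conclude from $(a,1)+(b,-1)=(a+b)(1,0)$ in the abelian fundamental group before applying the representation. The extra sanity check against $D_3\cdot D_3=-(a+b)$ is a nice touch but not needed.
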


\begin{proof}
The image of $\theta\mapsto (e^{2\pi i m_1\theta},e^{2\pi i m_2\theta})$ in $\pi_1((\mathbb{C}^*)^2)\cong \mathbb{Z}^2$ is $(m_1,m_2)$.  So $\gamma_3^{a+b}$ maps to $(a+b)(1,0)=(a+b,0)$ and $\gamma_1\gamma_2$ maps to $(a,1)+(b,-1)=(a+b,0)$.
\end{proof}

The fifth tangent point is locally modelled by $y(y-x^5)=0$ which is a type $A_9$ singularity. Its resolution graph is given in \textbf{Figure 6.2}. We can apply a base change so that all monodromy operators become unipotent. By applying the \textbf{Sublemma \ref{sl6.1}} (in the cases $a+b=1$ or $2$) to the toric resolution diagram in \textbf{Figure 6.2}, we obtain the following monodromy relations:
\begin{equation}
T_1^2=T_2; \;T_2^2=T_1T_3; \;T^3_2=T_2T_4; \;T_4^2=T_3T_5; \;T_5=T_4T_{E_1}T_y
\end{equation}
Which implies all $T_i$ have the same unipotent monodromy logarithm.

\begin{equation*}
\begin{tikzpicture}[scale=2.5]
    \node at (-1.3,2.8) (T1) {$T_1$};
    \path[->] (T1) edge  [loop right] node {} ();
    \node at (-0.9,2.4) (T2) {$T_2$};
    \path[->] (T2) edge  [loop below] node {} ();
    \node at (-0.9,2.6) (T3) {$T_3$};
    \path[->] (T3) edge  [loop right] node {} ();
    \node at (-0.4,2.2) (T4) {$T_4$};
    \path[->] (T4) edge  [loop below] node {} ();
    \node at (-0.5,2.4) (T5) {$T_5$};
    \path[->] (T5) edge  [loop right] node {} ();
    \node at (0.1,1.6) (Ta) {$T_y$};
    \path[->] (Ta) edge  [loop above] node {} ();
    \node at (-0.1,1.7) (Tb) {$T_{E_1}$};
    \path[->] (Tb) edge  [loop below] node {} ();
\draw[-,line width=1.0pt] (-1,3) -- (-1,1.9); 
\draw[-,line width=1.0pt] (-1.1,1.9) -- (-0.5,2.5);
\draw[-,line width=1.0pt] (-0.6,2.8) -- (-0.6,1.6);
\draw[-,line width=1.0pt] (-0.7,1.6) -- (-0.1,2.2);
\draw[-,line width=1.0pt] (-0.2,2.6) -- (-0.2,1.3);
\draw[-,line width=1.0pt] (-0.3,1.5) -- (0.3,1.5);
\draw[-,line width=1.0pt] (-0.3,1.8) -- (0.3,1.8);
\end{tikzpicture}
\mspace{30mu}
\begin{tikzpicture}[scale=1]
\fill (0,3) circle (2pt);
\node at (-0.5,3) {$(0,1)$};
\draw[-,line width=0.3pt] (0,1) -- (0,3.5);
\draw[-,line width=0.3pt] (0,1) -- (5.5,1);
\draw [-stealth](0,1) -- (5,3) [line width = 1pt];
\draw [-stealth](0,1) -- (4,3) [line width = 1pt];
\draw [-stealth](0,1) -- (3,3) [line width = 1pt];
\draw [-stealth](0,1) -- (2,3) [line width = 1pt];
\draw [-stealth](0,1) -- (1,3) [line width = 1pt];
\draw [-stealth](0,1) -- (1,1) [line width = 1pt];
\node at (1,3.2) {$(1,1)$};
\node at (2,3.2) {$(2,1)$};
\node at (3,3.2) {$(3,1)$};
\node at (4,3.2) {$(4,1)$};
\node at (5,3.2) {$(5,1)$};
\node at (1,0.7) {$(1,0)$};
\end{tikzpicture}
\end{equation*}
\begin{center}
    \textbf{Figure 6.2:} Resolution Graph and Toric Resolution of $A_9$-singularity.
\end{center}

\subsection{Proof of Theorem 5.3}\label{S6.2}

According to \textbf{Theorem \ref{thm2.8}}, monodromy cones can be described by their associated Limiting Mixed Hodge Structures (LMHS), which should be considered modulo the actions of rescaling and by $\Gamma$. We refer readers to \cite{BPR16} and \cite{KPR19} for details on the classification of nilpotent orbits by their associated Limiting Mixed Hodge Structure types, as well as their polarized relations. In this paper we directly apply the results for weight-3 Calabi-Yau type (cf. \cite[Example 5.8]{KPR19}). 

We start by looking at the nilpotent cone types of $\sigma_i, i=1,2,3,4$ in the sense of \cite{KPR19}.  The notation $\langle A_1\mid B\mid A_2\rangle$ means that the LMHS type of $N_1$ and $N_2$ are $A_1$ and $A_2$, while that of $N\in int(\sigma)$ is $B$.  It is easy to see that $\sigma_3$ and $\sigma_4$ are both of type $\langle \mathrm{I}_1\mid\mathrm{I}_2\mid \mathrm{I}_1\rangle$.  (See \textbf{Figure 6.3} for the meaning of $\mathrm{I}_1$ etc.)  On the other hand, $\sigma_1$ is of type $\langle \mathrm{IV}_2\mid\mathrm{IV}_2\mid \mathrm{IV}_2\rangle$, while $\sigma_2$ has type $\langle \mathrm{I}_1\mid\mathrm{IV}_2\mid \mathrm{IV}_2\rangle$.  An important remark here is the LMHS type of a nilpotent cone does not need to agree with those from its faces; as we can see, cones $\sigma_2, \sigma_3$ and $\sigma_4$ are such examples.

To prove $\sigma_i$ and their $Ad(\Gamma)$-orbits give a polyhedral fan, we need to prove for any two cones lies in these orbits, the worst intersection we could expect is gluing along a boundary vector; that is, there are no two cones intersecting along an interior vector of both (which we will refer to as an "interior intersection" in the remaining of this section).

\begin{equation*}
\begin{tikzpicture}[scale=0.7]
    \draw[-,line width=1.0pt] (0,0) -- (0,3.5);
    \draw[-,line width=1.0pt] (0,0) -- (3.5,0);
\fill (3,3) circle (3pt);
\node at (2,-0.5) {};
\fill (0,0) circle (3pt);
\fill (1.2,0.8) circle (3pt);
\fill (0.8,1.2) circle (3pt);
\fill (1.8,2.2) circle (3pt);
\fill (2.2,1.8) circle (3pt);
\draw [-stealth](2.8,2.8) -- (2.2,2.2) [line width = 1.0pt];
\draw [-stealth](1.6,2.0) -- (1.0,1.4) [line width = 1.0pt];
\draw [-stealth](2.0,1.6) -- (1.4,1.0) [line width = 1.0pt];
\draw [-stealth](0.8,0.8) -- (0.2,0.2) [line width = 1.0pt];
\node at (0.8,2) {};
\node at (1.7,-1) {LMHS of Type $\mathrm{IV}_2$};
\end{tikzpicture}
\mspace{30mu}
\begin{tikzpicture}[scale=0.7]
    \draw[-,line width=1.0pt] (0,0) -- (0,3.5);
    \draw[-,line width=1.0pt] (0,0) -- (3.5,0);
\fill (0,3) circle (3pt);
\fill (3,0) circle (3pt);
\fill (1.2,0.8) circle (3pt);
\fill (0.8,1.2) circle (3pt);
\fill (1.8,2.2) circle (3pt);
\fill (2.2,1.8) circle (3pt);
\draw [-stealth](1.6,2.0) -- (1.0,1.4) [line width = 1.0pt];
\draw [-stealth](2.0,1.6) -- (1.4,1.0) [line width = 1.0pt];
\node at (0.8,2) {} ;
\node at (1.7,-1) {LMHS of Type $\mathrm{I}_2$};
\end{tikzpicture}
\mspace{30mu}
\begin{tikzpicture}[scale=0.7]
    \draw[-,line width=1.0pt] (0,0) -- (0,3.5);
    \draw[-,line width=1.0pt] (0,0) -- (3.5,0);
\fill (0,3) circle (3pt);
\fill (3,0) circle (3pt);
\fill (1,1) circle (3pt);
\fill (1,2) circle (3pt);
\fill (2,1) circle (3pt);
\fill (2,2) circle (3pt);
\draw [-stealth](1.8,1.8) -- (1.2,1.2) [line width = 1.0pt];
\node at (0.8,2){} ;
\node at (1.7,-1) {LMHS of Type $\mathrm{I}_1$};
\end{tikzpicture}
\end{equation*}
\begin{center}
    \textbf{Figure 6.3:} Some LMHS types for $(1,2,2,1)$
\end{center}

Notice that conjugation by an element in $Sp(6,\mathbb{Z})$ does not change the type of a monodromy nilpotent vector defined by \cite{KPR19}, thus we only have the following possible interior intersections between different cones and their $Ad(\Gamma)$-orbits that might violate the definition of fan:

\begin{description}
\item[Case 1.] For $i,j\in\{1,2\}$, a cone in $\sigma_i$'s $Ad(\Gamma)$-orbit may intersect with $\sigma_j$;
\item[Case 2.] For $i,j\in\{3,4\}$, a cone in $\sigma_i$'s $Ad(\Gamma)$-orbit may intersect with $\sigma_j$;
\end{description}
We shall investigate these two cases separately.  Note that in principle we can take $\Gamma\leq  Sp(6,\mathbb{Z})$ to be any finite-index subgroup, so what really needs to be shown is that $Sp(6,\mathbb{Z})$-orbits do not create \emph{infinitely many} interior intersections in a single cone.

\subsubsection{Analysis of case 1}

In this case we proved a stronger result: $\sigma_i$ and $Ad(Sp(6,\mathbb{Z}))\sigma_j$ have no interior intersection for $i,j\in\{1,2\}$. Suppose not; i.e., there exists a $M \in Sp(6,\mathbb{Z})$ such that for $i,j\in \{1,2\}$, $\sigma_i$ and $M^{-1}\sigma_jM$ intersecting at some interior point. By \textbf{Theorem \ref{thm2.8}}, $\sigma_i$ and $M^{-1}\sigma_jM$ give the same monodromy weight filtration $W_{\bullet}$, which implies $M$ should lie in the corresponding parabolic subgroup preserving $W_{\bullet}$. Moreover, if we consider the image of $\sigma_i$ and $M^{-1}\sigma_jM$ in the graded pieces $Gr_{-2}\mathfrak{g}:= \{M \in \mathfrak{sp}(6,\mathbb{Q}) | MW_k \in W_{k-2}\}$ as well as $M$'s adjoint action on $Gr_{-2}$, the only effective part is $M$'s Levi factor.

To be precise, we use the same notation for cones, vectors in $\mathfrak{g}$ and their image in $Gr_{-2}\mathfrak{g}$. For example,

 \small
 \begin{equation}\label{eq6.6}
N_x=
\begin{pmatrix}
0 & -1 & 0 & 0 & 0 & 0\\
0 & 0 & 0 & -10 & -5 & 0\\
0 & 0 & 0 & -10 & -10 & 0\\
0 & 0 & 0 & 0 & 0 & 0\\
0 & 0 & 0 & 0 & 0 & 1\\
0 & 0 & 0 & 0 & 0 & 0
\end{pmatrix},
N_y=
\begin{pmatrix}
0 & 0 & -1 & 0 & 0 & 0\\
0 & 0 & 0 & -10 & -10 & 0\\
0 & 0 & 0 & -5 & -10 & 0\\
0 & 0 & 0 & 0 & 0 & 1\\
0 & 0 & 0 & 0 & 0 & 0\\
0 & 0 & 0 & 0 & 0 & 0
\end{pmatrix},
\end{equation}
\begin{equation}
N_{E_1}=
\begin{pmatrix}
0 & 0 & 0 & 0 & 0 & 0\\
0 & 0 & 0 & 0 & 50 & 0\\
0 & 0 & 0 & 0 & 0 & 0\\
0 & 0 & 0 & 0 & 0 & 0\\
0 & 0 & 0 & 0 & 0 & 0\\
0 & 0 & 0 & 0 & 0 & 0
\end{pmatrix}
\end{equation}
\normalsize
and the Levi factor of elements in the parabolic subgroup of $Sp(6,\mathbb{Z})$ defined by the weight filtration $W_{\bullet}$ (up to a sign) has the form:
\small
\begin{equation}
g=
\begin{pmatrix}
1 & 0 & 0 & 0 & 0 & 0\\
0 & a & b & 0 & 0 & 0\\
0 & c & d & 0 & 0 & 0\\
0 & 0 & 0 & a & -b & 0\\
0 & 0 & 0 & -c & d & 0\\
0 & 0 & 0 & 0 & 0 & 1
\end{pmatrix},
\begin{pmatrix}
a & b\\
c & d
\end{pmatrix} \in SL(2,\mathbb{Z}).
\end{equation}
\normalsize

Suppose there is a $g$ such that $g^{-1}\sigma_1g$ intersects with $\sigma_1$ at the half-line $sN_x+tN_y$ where $s,t > 0$. Let $N = N_{s,t} := sN_x+tN_y$ where $s,t$ are positive integers, then the triple $\{g^{-1}N_xg, g^{-1}N_yg, N\}$ lies in a plane, and so does the triple $\{N_xg, N_yg, gN\}$. Writing these relations in terms of $g$ with the above form, we have:

\begin{equation}
N_xg=
\begin{pmatrix}
 0 & -a & -b & 0 & 0 & 0\\ 
 0 & 0 & 0 & -10a+5c & 10b-5d & 0\\ 
 0 & 0 & 0 & -10a+10c & 10b-10d & 0\\
 0 & 0 & 0 & 0 & 0 & 0\\
 0 & 0 & 0 & 0 & 0 & 1\\
 0 & 0 & 0 & 0 & 0 & 0
\end{pmatrix}, 
\end{equation}
\begin{equation}
N_yg=
\begin{pmatrix}
 0 & -c & -d & 0 & 0 & 0\\ 
 0 & 0 & 0 & -10a+10c & 10b-10d & 0\\ 
 0 & 0 & 0 & -5a+10c & 5b-10d & 0\\
 0 & 0 & 0 & 0 & 0 & 1\\
 0 & 0 & 0 & 0 & 0 & 0\\
 0 & 0 & 0 & 0 & 0 & 0
\end{pmatrix},
\end{equation}\tiny
\begin{equation}
gN=
\begin{pmatrix}
 0 & -s & -t & 0 & 0 & 0\\ 
 0 & 0 & 0 & -10(a+b)s-(10a+5b)t & -(5a+10b)s-10(a+b)t & 0\\ 
 0 & 0 & 0 & -10(c+d)s-(10c+5d)t & -(5c+10d)s-10(c+d)t & 0\\
 0 & 0 & 0 & 0 & 0 & -bs+at\\
 0 & 0 & 0 & 0 & 0 & ds-ct\\
 0 & 0 & 0 & 0 & 0 & 0
\end{pmatrix}
\end{equation}
\normalsize
Therefore, by aligning non-zero entries, the following $8 \times 3$ matrix should be singular:

\begin{equation}
R:=
\begin{pmatrix}
a & c & s\\
b & d & t\\
10a-5c & 10a-10c & 10(a+b)s+(10a+5b)t \\
-10b+5d & -10b+10d & (5a+10b)s+10(a+b)t\\
10a-10c & 5a-10c & 10(c+d)s+(10c+5d)t\\
-10b+10d & -5b+10d & (5c+10d)s+10(c+d)t\\
0 & -1 & bs-at\\
-1 & 0 & -ds+ct
\end{pmatrix}.
\end{equation}
\normalsize
Write $R_{ijk}$ for the $3 \times 3$ submatrix obtained by taking the $i,j,k$th-columns of $R$, then: 

\begin{equation}
\det(R_{178}) =
\begin{vmatrix}
a & c & s\\
0 & -1 & bs-at\\
-1 & 0 & -ds+ct
\end{vmatrix} =2a(ds-ct),
\end{equation}
\begin{equation}
\det(R_{278}) =
\begin{vmatrix}
b & d & t\\
0 & -1 & bs-at\\
-1 & 0 & -ds+ct
\end{vmatrix} =2b(ds-ct).
\end{equation}
\normalsize
Since $a,b$ can not both vanish, we must have $ds-ct=0$. Without loss of generality suppose $s=c, t=d$.

Now $R$ becomes:

\begin{equation}
R=
\begin{pmatrix}
a & c & c\\
b & d & d\\
10a-5c & 10a-10c & 10(a+b)c+(10a+5b)d \\
-10b+5d & -10b+10d & (5a+10b)c+10(a+b)d\\
10a-10c & 5a-10c & 10(c+d)c+(10c+5d)d\\
-10b+10d & -5b+10d & (5c+10d)c+10(c+d)d\\
0 & -1 & -1\\
-1 & 0 & 0
\end{pmatrix}.
\end{equation}
\normalsize
It is clear that the only possibility for $R$ to be singular is when the second column and the third column are identical. By taking difference between $3^{th}$ and $4^{th}$ rows as well as $5^{th}$ and $6^{th}$ rows, we get the following equations:

\begin{equation} 
\begin{split}
&2(a+b-c-d) =  ac-bd \\ 
&a+b-2c-2d =  c^2-d^2 \\
&ad-bc=1
\end{split}
\end{equation}
We may view this as a linear system of the form $Ax=b$ where $x = [a, b]^t, b=[1, (c+1)^2-(d-1)^2, 2c+2d]^t$, and
\begin{equation}
A=
\begin{pmatrix}
d & -c \\
1 & 1 \\
2-c & 2+d
\end{pmatrix}.
\end{equation}
The rank of $A$ is $2$ if $c+d\neq 0$ and $1$ otherwise, and the determinant of the matrix $(A|b)$ is $(c+d)((c+d)((c-d)^2-2)+1)$. Therefore, the only chance where $Ax=b$ has a solution is $c+d\neq 0$ and 
$(c+d)((c-d)^2-2)=-1$.  The only 
\tiny
$\begin{pmatrix}
a & b\\
c & d
\end{pmatrix}$ \normalsize $ \in SL(2, \mathbb{Z})$ 
satisfying the above equation is the identity. Therefore, $\sigma_1$ has no non-trivial intersections with its own $Ad(Sp(6,\mathbb{Z}))$-orbit. Similarly we can prove the same statement holds for $\sigma_2$.

\bigskip

Now we shall check the intersection between $\sigma_1$ and $Ad(Sp(6,\mathbb{Z}))\sigma_2$. Any vector lying in the interior of $\sigma_2$ can be generated by some $N_t := N_y + tN_{E_1}$ with $t \in \mathbb{Q}_{> 0}$. In matrix form it is written as:
\begin{equation}
N_t=
\begin{pmatrix}
 0 & 0 & -1 & 0 & 0 & 0\\ 
 0 & 0 & 0 & -10 & -10+50t & 0\\ 
 0 & 0 & 0 & -5 & -10 & 0\\
 0 & 0 & 0 & 0 & 0 & 1\\
 0 & 0 & 0 & 0 & 0 & 0\\
 0 & 0 & 0 & 0 & 0 & 0
\end{pmatrix}.
\end{equation}
We investigate the column matrix generated by $N_xg, N_yg$ and $gN_t$ as we did before:
\begin{equation}
R':=
\begin{pmatrix}
a & c & 0\\
b & d & 1\\
10a-5c & 10a-10c & 10a+5b \\
-10b+5d & -10b+10d & 10a+10b-50at\\
10a-10c & 5a-10c & 10c+5d\\
-10b+10d & -5b+10d & 10c+10d-50ct\\
0 & -1 & -a\\
-1 & 0 & c
\end{pmatrix}.
\end{equation}
If $R'$ is singular, the first (or the last) two rows of $R'$ imply the linear relation $-cN_xg+aN_yg=gN_t$. Applying this relation to the third and fifth rows of $R'$, we get the relations: 
\begin{equation} 
\begin{split}
b = 2a^2+c^2-4ac-2a \\
d = a^2+2c^2-4ac-2c
\end{split}
\end{equation}
and hence 
\begin{equation}\label{eq6a}
\begin{vmatrix}
a & b\\
c & d
\end{vmatrix}=
\begin{vmatrix}
a & 2a^2+c^2-4ac-2a\\
c & a^2+2c^2-4ac-2c
\end{vmatrix}=(a-c)(a^2-5ac+c^2)=1
\end{equation}
The only 
$\begin{pmatrix}
a & b\\
c & d
\end{pmatrix}$ in $SL(2,\mathbb{Z})$ satisfying \eqref{eq6a} are:
$\begin{pmatrix}
1 & 0\\
0 & 1
\end{pmatrix}$ and
$\begin{pmatrix}
0 & 1\\
-1 & 0
\end{pmatrix}$,
but the second matrix gives $t=0$ which means the intersection is on the boundary.

\subsubsection{Analysis of case 2}

We first check possible interior intersections in $\sigma_3$'s $Ad(Sp(6,\mathbb{Z}))$-orbit. Recalling the matrix representation of $N_{p_1}$ and $N_{p_3}$ in \eqref{eq5.10}, we shall find a new basis under which $N_{p_1}$ and $N_{p_3}$ have the standard upper-triangular form. Notice that under such a rational basis which has the same intersection matrix as $Q$ in \eqref{eq5.3}, type \textbf{$I_1$} and type \textbf{$I_2$} monodromy nilpotent operators should lie in a $3$-dimensional Abelian Lie subalgebra of $\mathfrak{g}_{\mathbb{Q}}$ and have the standard form

\begin{equation}\label{eq6b}
N=
\begin{pmatrix}
 0 & 0 & 0 & 0 & x & y\\ 
 0 & 0 & 0 & 0 & z & x\\ 
 0 & 0 & 0 & 0 & 0 & 0\\
 0 & 0 & 0 & 0 & 0 & 0\\
 0 & 0 & 0 & 0 & 0 & 0\\
 0 & 0 & 0 & 0 & 0 & 0
\end{pmatrix}
\end{equation}
with $(x,y,z)\in \mathbb{Q}^3$. Now let
\begin{equation}
P_1:=
\begin{pmatrix}
 -5 & -5 & -1 & -1 & 1 & 0\\ 
 2 & 5 & 1 & -4 & -1 & 1\\ 
 5 & 2 & 1 & -1 & 0 & 0\\
 0 & -1 & 0 & 1 & 0 & 0\\
 -1 & 0 & 0 & 0 & 0 & 0\\
 -1 & -1 & 0 & 0 & 0 & 0
\end{pmatrix}, 
\end{equation}
We have $P_1\in Sp(6,\mathbb{Z})$, and
\begin{equation}
P_1^{-1}N_{p_1,1}P_1=
\begin{pmatrix}
 0 & 0 & 0 & 0 & 0 & 0\\ 
 0 & 0 & 0 & 0 & -1 & 0\\ 
 0 & 0 & 0 & 0 & 0 & 0\\
 0 & 0 & 0 & 0 & 0 & 0\\
 0 & 0 & 0 & 0 & 0 & 0\\
 0 & 0 & 0 & 0 & 0 & 0
\end{pmatrix}, 
\end{equation}
\begin{equation}
P_1^{-1}N_{p_1,2}P_1=
\begin{pmatrix}
 0 & 0 & 0 & 0 & 0 & -1\\ 
 0 & 0 & 0 & 0 & 0 & 0\\ 
 0 & 0 & 0 & 0 & 0 & 0\\
 0 & 0 & 0 & 0 & 0 & 0\\
 0 & 0 & 0 & 0 & 0 & 0\\
 0 & 0 & 0 & 0 & 0 & 0
\end{pmatrix} 
\end{equation}
which implies that if we identify \eqref{eq6b} with $(x,y,z)\in \mathbb{R}^3$, the cone $\sigma_0:=P_1^{-1}\sigma_3 P_1$ is identified with the cone $\mathbb{R}_{\geq 0}<(0,-1,0),(0,0,-1)>$ in $\mathbb{R}^3$. Under the new basis, elements in the corresponding parabolic subgroup of $Sp(6,\mathbb{Z})$ have their Levi factors of the form:
\begin{equation}
M_L=
\begin{pmatrix}
 \begin{matrix}
 a & b\\
 c & d
 \end{matrix} &  &  \\
  & A &  \\
  & & \begin{matrix}
 a & -b\\
 -c & d
 \end{matrix}
\end{pmatrix} 
\end{equation}
where \tiny
$\begin{pmatrix}
a & b\\
c & d
\end{pmatrix}$
\normalsize  
and $A$ are both in $SL(2,\mathbb{Z})$, and
\begin{equation}
M_L^{-1}{\sigma_0}M_L=
\begin{pmatrix}
 0 & 0 & 0 & 0 & cds+abt & -d^2s-b^2t\\ 
 0 & 0 & 0 & 0 & -c^2s-a^2t & cds+abt\\ 
 0 & 0 & 0 & 0 & 0 & 0\\
 0 & 0 & 0 & 0 & 0 & 0\\
 0 & 0 & 0 & 0 & 0 & 0\\
 0 & 0 & 0 & 0 & 0 & 0
\end{pmatrix} 
\end{equation}
where $s,t>0$. It is clear that $M_L^{-1}{\sigma_0}M_L$ and ${\sigma_0}$ have interior intersection if and only if $cds+abt=0$, which implies $ab$ and $cd$, hence $ad$ and $bc$ are both non-zero and have different signs, which is impossible for $a,b,c,d\in \mathbb{Z}, ad-bc=1$. Therefore, there is no self interior intersection in ${\sigma_0}$'s $Ad(Sp(6,\mathbb{Z}))$-orbit. Since $P_1\in Sp(6,\mathbb{Z})$, it follows that $\sigma_3$ has no interior intersection with its own $Ad(Sp(6,\mathbb{Z}))$-orbit.

\bigskip
The same idea can be applied to investigate $\sigma_4$. We have $P_3^{-1}\sigma_4P_3={\sigma_0}$, where 

\begin{equation}
P_3:=
\begin{pmatrix}
 40 & -40 & -\frac{3}{4} & -\frac{1}{4} & \frac{1}{16} & 0\\ 
 -4 & 20 & 1 & -\frac{4}{3} & -\frac{1}{12} & -\frac{1}{12}\\ 
 -20 & 4 & 1 & 0 & 0 & 0\\
 0 & -12 & 0 & 1 & 0 & 0\\
 12 & 0 & 0 & 0 & 0 & 0\\
 16 & -16 & 0 & 0 & 0 & 0
\end{pmatrix}
\end{equation}
is in $Sp(6,\mathbb{Q})$. Let
\begin{equation}\label{eq6c}
\Gamma_4:=P_3^{-1}Sp(6,\mathbb{Z})P_3\leq Sp(6,\mathbb{Q}),
\end{equation}
then $\sigma_4$'s $Ad(\Gamma_4)$-orbit has no self-interior intersections.  Intersecting \eqref{eq6c} with $Sp(6,\mathbb{Z})$, of course, yields a finite-index subgroup.

Finally, we check the interplay between $\sigma_3$ and $\sigma_4$. These two cones are conjugated under $Sp(6,\mathbb{Q})$, by the matrix:
\begin{equation}
P:= P_3P_1^{-1}=
\begin{pmatrix}
 \frac{1}{16} & 0 & -\frac{11}{16} & -\frac{7}{8} & -\frac{1327}{16} & \frac{627}{16}\\ 
 -\frac{1}{6} & -\frac{1}{12} & \frac{11}{12} & -\frac{11}{12} & \frac{313}{12} & -\frac{101}{6}\\ 
 0 & 0 & 1 & 1 & 28 & -3\\
 0 & 0 & 0 & 1 & -11 & 11\\
 0 & 0 & 0 & 0 & -12 & 0\\
 0 & 0 & 0 & 0 & -32 & 16
\end{pmatrix}, 
\end{equation}
which lies in $Sp(6,\mathbb{Q})$. As $\sigma_3=P^{-1}\sigma_4P$, suppose there exists $M\in Sp(6,\mathbb{Z})$ such that $M^{-1}\sigma_4M$ and $\sigma_3=P^{-1}\sigma_4P=P_1P_3^{-1}\sigma_4P_3P_1^{-1}$ have interior intersections. Since $P_1^{-1}\sigma_3P_1=P_3^{-1}\sigma_4P_3={\sigma_0}$, it is equivalent to say ${\sigma_0}$ and $(M')^{-1}{\sigma_0}M'$ have interior intersections, where
\begin{equation}
M':=P_1^{-1}M^{-1}P_3
\end{equation}
is an element in $Sp(6,\mathbb{Q})$. It is clear by the previous argument, a contradiction will be caused if $M'\in Sp(6,\mathbb{Z})$.

For a natural number $n$, denote by $\Gamma(n)\leq Sp(6,\mathbb{Z})$  the kernel of the natural map $Sp(6,\mathbb{Z})\rightarrow Sp(6,\mathbb{Z}/n\mathbb{Z})$. The congruence subgroup $\Gamma(n)$ is of finite index in  $Sp(6,\mathbb{Z})$, as $Sp(6,\mathbb{Z}/n\mathbb{Z})$ is a finite group.

Now we take $n=48^2$ and let $\Gamma:=\Gamma(n)$.  We have $[Sp(6,\mathbb{Z}): \Gamma]<\infty$, and it can be directly checked $P_i^{-1}\Gamma P_i\leq Sp(6,\mathbb{Z}), i=1, 3$, and $P_1^{-1}\Gamma P_3\subset Sp(6,\mathbb{Z})$. Therefore, $\sigma_3, \sigma_4$ and their $Ad(\Gamma)$-orbits have no self interior intersections. The proof of \textbf{Theorem \ref{thm5.3}} is now complete.

\begin{remark}
 We are expecting the intersecting behaviors between nilpotent cones as well as their orbits under the adjoint action by some arithmetic group $\Gamma$ an essential characteristic that only depends on the nilpotent cone types introduced at the beginning of this section, as nilpotent cones and their orbits under $Ad(\Gamma)$-action are essentially classified in this way.
\end{remark}

\section{Some Mumford-Tate Domain aspects}\label{S7}

The construction of Kato-Usui spaces as well as examples shown in the previous sections are connected with the theory of Mumford-Tate domains, which is a concept generalizing that of period domains. We shall briefly review some basic results about this topic, and then turn to the proof of \textbf{Theorem \ref{thm5.4}}. 

Regarding the general theory of Mumford-Tate domains, we refer to \cite{GGK12} for the fundamental matrerial, and \cite{Moo04} as good introductory notes. The compatibility of Kato-Usui's construction with the Mumford-Tate domain setting is summarized in detail by M. Kerr and G. Pearlstein, see \cite[\S 6]{KP16}.

\medskip

We first recall some basic concepts. Given a Polarized Hodge Structure $\varphi: U(1)\rightarrow SL(V)$, the \textbf{Mumford-Tate group} $M_{\varphi}$ is the smallest $\mathbb{Q}$-algebraic subgroup of $SL(V)$ containing the image of $\varphi$. The \textbf{Mumford-Tate domain} $D_{\varphi}\subset D$ is defined to be the orbit $M_{\varphi,\mathbb{R}}\,\varphi$.  It is well-known that if we define the \textbf{Hodge tensors} of a Hodge structure by
\begin{equation}
    Hg_{\varphi}^{\bullet, \bullet}:= \bigcup_{p\in \mathbb{Z}}(T^{k,l}V\bigcap F_{k,l}^{\frac{n(k-l)}{2}}),
\end{equation}
where
\begin{equation}
    (T^{k,l}V):=V^{\bigotimes k}\otimes {V^*}^{\bigotimes l}
\end{equation}
inherits a natural weight $n(k-l)$ polarized Hodge structure $F_{k,l}^{\bullet}$ from $\varphi$, then the Mumford-Tate group is exactly the $\mathbb{Q}$-subgroup fixing all Hodge tensors.

\begin{definition}\label{def7.1}
We say a Hodge structure $\varphi$ is (Mumford-Tate) \textbf{generic} (resp. \textbf{CM}) if its associated Mumford-Tate group is $Aut(V,Q)$ (resp. a torus in $Aut(V,Q)$), or equivalently, its associated Mumford-Tate domain is the whole period domain $D$ (resp. a single point). 
\end{definition}

Besides these two ``end points'', Mumford-Tate domains are subdomains of the corresponding period domain. It has been proven that Mumford-Tate groups are essentially classified by Hodge representations, see \cite[Chap. IV.A]{GGK12}. Regarding the classification of Hodge representations of weight $\leq 3$, and hence the classification of possible Mumford-Tate groups of corresponding Hodge type, see Han and Robles' recent work \cite{HR20}.

For a geometric variation of Hodge structure given by a family, it is natural to ask how generic the family is in this sense.  Any period map into $D$ factors through the MT-domain associated to a ``Hodge generic'' point of the image.  The generic information about a period map can be analyzed from monodromy:

\begin{remark}
 For a period map $S\rightarrow \Gamma\backslash D$ where $\Gamma$ is the monodromy group, the (identity component of the) $\mathbb{Q}$-Zariski closure of $\Gamma$ is called the \textbf{geometric monodromy group}, denoted as $\bar{\Gamma}^{\mathbb{Q}}$. Denoting the generic Mumford-Tate group by $M$, we have $\bar{\Gamma}^{\mathbb{Q}} \trianglelefteq M^{Der}$ where the upperscript means the derived subgroup. For more details, we refer to \cite[Chap. IV.A]{GGK12} and its references.

\end{remark}

We now investigate \textbf{Theorem \ref{thm5.4}}. The idea of the proof is analyzing all possibilities of the generic Mumford-Tate domain associated to the family, and rulling out all but one of them based on monodromy. By \cite[Thm. IV.A.2]{GGK12}, Any (real) MT-group must contain a compact maximal torus. According to Han and Robles' classification of type $(1,2,2,1)$ Hodge representations in \cite{HR20}, all connected reductive subgroups $M\leq Sp(6,\mathbb{Q})$ that could be a MT-group for some elements in $D$ are (or are contained in):
\begin{description}
\item[1. ] A compact maximal torus, ruled out by any non-trivial monodromies among \eqref{eq5.8}-\eqref{eq5.11};
\item[2. ] $SL(2)\times SL(2)$, which corresponds to the tensor construction of Hodge types $(1,1)\otimes (1,1,1)$, ruled out by $\mathrm{I}_1$ and $\mathrm{I}_2$ degenerations (e.g. in $\sigma_3,\sigma_4$), since these are not tensor products of degenerations of $(1,1)$ and $(1,1,1)$;
\item[3. ] $U(2,1)$, which corresponds to a decomposition of the VHS (over an imaginary quadratic field) into $W\oplus \bar{W}$, with $W$ of type $(1,1,1,0)$ or $(1,2,0,0)$, ruled out by the Hodge-Tate type monodromies \eqref{eq5.8};
\item[4. ] $SL(2)\times Sp(4)$, which corresponds to the direct sum construction of Hodge types $(1,1,1,1)\oplus (1,1)$; 
\item[5. ] $Sp(6)$.
\end{description}
\medskip

The cases left are (4) and (5). We rule out (4) as follows. For a nilpotent $N\in \mathfrak{sp}(6,\mathbb{Q})$ and an element $v\in H_{\mathbb{C}}$ which does not lie in the image of $N$, we define an $l$-string of $N$ to be the collection of non-trivial elements $\{v, Nv,...,N^{l-1}v\}$ where $Nv\neq 0$ but $N^{l}v=0$. For example, according to the classification in \textbf{Figure 6.3}, both type $I_1$ and $I_2$ monodromies admit only $1$-strings while type $IV_2$ monodromies also admit $3$-strings.

It is clear in case (4), the only allowed strings are $1$-strings and $3$-strings. However, it can be checked both $N_x-N_y$ and $[(-5)\pm \sqrt{21}]N_x+2N_y$ admits $2$-strings, therefore (4) is ruled out and case (5) becomes the only survivor, which implies the image of this period map is generic.

\medskip

Another comment is for Hodge type $(1,2,2,1)$, according to Kerr-Pearlstein-Robles' classification as introduced at the beginning of \textbf{Section \ref{S6.2}}, The only LMHS type allows the existence of $2$-strings is type $\mathrm{III}_0$. Its Hodge-Deligne diagram is shown in \textbf{Figure 7.1} below.  We are not saying that we have this LMHS type here, only that we have ``a nilpotent element of this type'' generated by sums of logarithms of unipotent elements of $\Gamma$.  (This still rules out (4).)
\medskip

We conclude this section with several remarks: The generic properties of the family show parts of the Kato-Usui boundary components introduced in \textbf{Section \ref{S3}} we borrow from the period domain to compactify a given geometric variation are also ``generic'', which means that (considered together) they do not attach to any proper Mumford-Tate subdomains. In general, associated to any Mumford-Tate domain are boundary components given by nilpotent cones fixed by the associated Mumford-Tate group, as monodromy nilpotent elements are naturally Hodge $(-1,-1)$-tensors.  

The geometry of Kato-Usui boundary components can be studied by fibering them over the MT-domains associated to the $Gr^W$ of the LMHS; the points in the fibers record extension classes for distinct weight-graded pieces.  The overall structure is determined by the LMHS types of the nilpotent cones.  For the general theory, we refer to \cite{KP16} and \cite{GGR21}, and \cite{GGK08} for an extended example related to mirror quintic type Calabi-Yau threefolds (i.e. those with Hodge type (1,1,1,1)).

\begin{align*}
\begin{tikzpicture}[scale=0.7]
    \draw[-,line width=1.0pt] (0,0) -- (0,3.5);
    \draw[-,line width=1.0pt] (0,0) -- (3.5,0);
\fill (2,3) circle (3pt);
\fill (3,2) circle (3pt);
\fill (2,1) circle (3pt);
\fill (1,0) circle (3pt);
\fill (0,1) circle (3pt);
\fill (1,2) circle (3pt);
\draw [-stealth](2.8, 1.8) -- (2.2,1.2) [line width = 1.0pt];
\draw [-stealth](1.8, 0.8) -- (1.2,0.2) [line width = 1.0pt];
\draw [-stealth](1.8, 2.8) -- (1.2,2.2) [line width = 1.0pt];
\draw [-stealth](0.8, 1.8) -- (0.2,1.2) [line width = 1.0pt];
\node at (1.7,-1) {\textbf{Figure 7.1} LMHS of Type $III_0$};
\end{tikzpicture}
\end{align*}

\section{Appendix: Two examples regarding the non-existence of complete fan}

In this section, we provide two examples on how the procedures introduced in \textbf{Section \ref{S4}} can be used to construct nilpotent orbits that don't come from a given rational polyhedral fan $\Sigma\subset \mathfrak{g}_{\mathbb{Q}}$ generated by nilpotent elements.

\subsection{Watanabe's example of type $(2,2,2)$}

For the first example, we re-investigate the family provided in \cite{Wat08}, but via a rather different method.

Let $(h^{p,q})=(h^{2,0}, h^{1,1}, h^{0,2}) = (2,2,2)$. Suppose $H_{\mathbb{Z}}$ admits a free basis $\{e_i\}_{1\leq i\leq 6}$ and the intersection form $H_{\mathbb{Q}}\times H_{\mathbb{Q}} \rightarrow \mathbb{Q}$ given by
\begin{equation}
    (<e_i,e_j>)_{1\leq i,j\leq 6} = 
    \begin{pmatrix}
    -I_2 & 0 & 0\\
    0 & E & 0\\
    0 & 0 & E
    \end{pmatrix},
    I_2 = 
    \begin{pmatrix}
    1 & 0 \\
    0 & 1    
    \end{pmatrix},
    E = 
    \begin{pmatrix}
    0 & 1\\
    1 & 0
    \end{pmatrix}.
\end{equation}
Let $\sigma_0 = \mathbb{Q}_{\geq 0}<N_1, N_2>$, where
\begin{equation}
    N_1 = 
    \begin{pmatrix}
    0 & 0 & 0 & 0 & 0 & 0\\
    0 & 0 & 0 & 0 & 0 & 0\\
    0 & 0 & 0 & 0 & 0 & -1\\
    0 & 0 & 0 & 0 & 0 & 0\\
    0 & 0 & 0 & 1 & 0 & 0\\
    0 & 0 & 0 & 0 & 0 & 0
    \end{pmatrix},
    N_2 = 
    \begin{pmatrix}
    0 & 0 & 0 & 0 & 0 & 0\\
    0 & 0 & 0 & 0 & 0 & 0\\
    0 & 0 & 0 & 0 & 0 & 0\\
    0 & 0 & 0 & 0 & 0 & 1\\
    0 & 0 & -1 & 0 & 0 & 0\\
    0 & 0 & 0 & 0 & 0 & 0
    \end{pmatrix}.
\end{equation}
Let $F^{\bullet}$ be defined by $F^2 = \mathbb{C}(ie_1+e_2) \bigoplus \mathbb{C}e_6, F^1 = (F^2)^{\perp}$. According to \cite{Wat08}, $exp(\sigma_{0, \mathbb{C}})F^{\bullet}$ is a $\sigma_0$-nilpotent orbit. Let $N_0 = N_1+N_2 \in int(\sigma_0)$, and denote $G=Sp(6, \mathbb{Q}), \mathfrak{g}=Lie(G)\leq End(H_{\mathbb{Q}})$ with the induced filtration $\tilde{F}^{\bullet}$. We also write $C(N)$ or $C(\sigma)$ for the corresponding commutators in $\mathfrak{g}_{\mathbb{C}}$, and one computes that

\begin{equation}
    C(\sigma_0)\cap \tilde{F}^{-1}= \left\{
    \begin{pmatrix}
    0 & a_{12} & 0 & 0 & 0 & a_{16}\\
    -a_{12} & 0 & 0 & 0 & 0 & a_{26}\\
    0 & 0 & 0 & 0 & 0 & a_{36}\\
    0 & 0 & 0 & 0 & 0 & a_{46}\\
    a_{16} & a_{26} & -a_{46} & -a_{36} & 0 & 0\\
    0 & 0 & 0 & 0 & 0 & 0
    \end{pmatrix} |  a_{ij} \in \mathbb{C} \right\}
    ,
\end{equation}
\begin{equation}
    C(N_0)\cap \tilde{F}^{-1}= \left\{
    \begin{pmatrix}
    0 & a_{12} & a_{13} & a_{13} & 0 & a_{16}\\
    -a_{12} & 0 & a_{23} & a_{23} & 0 & a_{26}\\
   a_{13} & a_{23} & 0 & 0 & 0 & a_{36}\\
    a_{13} & a_{23} & 0 & 0 & 0 & a_{46}\\
    a_{16} & a_{26} & -a_{46} & -a_{36} & 0 & 0\\
    0 & 0 & 0 & 0 & 0 & 0
    \end{pmatrix} | a_{ij} \in \mathbb{C} \right\}.
\end{equation}
It is evident that $C(\sigma_0)\cap \tilde{F}^{-1}$ and $C(N_0)\cap \tilde{F}^{-1}$ are not equal. For example, we can take:\footnote{In this section, we use ``$N$'' to denote the ``$M$'' from the proof of Lemma \ref{lem4.4}, since we will choose it to be nilpotent.  This is not to be confused with elements in $\sigma$.}

\begin{equation}
N=
 \begin{pmatrix}
    0 & 2 & 1 & 1 & 0 & 0\\
    -2 & 0 & 1 & 1 & 0 & 0\\
    1 & 1 & 0 & 0 & 0 & 0\\
    1 & 1 & 0 & 0 & 0 & 0\\
    0 & 0 & 0 & 0 & 0 & 0\\
    0 & 0 & 0 & 0 & 0 & 0
\end{pmatrix},\;\;\;
exp(N) = 
 \begin{pmatrix}
    0 & 3 & 2 & 2 & 0 & 0\\
    -1 & 0 & 0 & 0 & 0 & 0\\
    0 & 2 & 2 & 1 & 0 & 0\\
    0 & 2 & 1 & 2 & 0 & 0\\
    0 & 0 & 0 & 0 & 1 & 0\\
    0 & 0 & 0 & 0 & 0 & 1
\end{pmatrix},
\end{equation}
and $F_{N}^{\bullet} := exp(N)F^{\bullet}$ can be represented as:
\begin{equation}\label{eq8.6}
    F_{N}^{2}=\mathbb{C}(3e_1-ie_2+2e_3+2e_4) \bigoplus \mathbb{C}e_6, F_{N}^{1} = (F_{N}^{2})^{\perp}.
\end{equation}
Now clearly we have $N\in C(N_0)\cap \tilde{F}^{-1} \subset C^h(N_0)$. However, it can be checked that $[N,N_1]\notin \tilde{F}^{-1}$, therefore it satisfies the assumption in \textbf{Lemma \ref{lem4.4}}.
(Also note that by \cite[Lemma 7.2]{Wat08} we see that $exp(\mathbb{C}N_0)F_{N}^{\bullet}$ is an $\mathbb{R}_{\geq 0}\langle N_0\rangle$-nilpotent orbit, but $exp(\mathbb{C}\sigma_0)F_{N}^{\bullet}$ is not a $\sigma_0$-nilpotent orbit as the Griffiths transversality fails.) We conclude by \textbf{Theorem \ref{thm4.6}} that there is no complete fan for $D$ of Hodge type $(2,2,2)$.

\medskip

There is another important observation. Denote $\tilde{F}_N^{\bullet}$ as the induced filtration by $F_N^{\bullet}$ on $\mathfrak{g}_{\mathbb{C}}$. Let
\begin{equation}
N':=
 \begin{pmatrix}
    0 & 2 & 1 & 1 & 0 & 0\\
    -2 & 0 & 1 & 1 & 0 & 0\\
    1 & 1 & 0 & 0 & 0 & 1\\
    1 & 1 & 0 & 0 & 0 & -1\\
    0 & 0 & 1 & -1 & 0 & 0\\
    0 & 0 & 0 & 0 & 0 & 0
\end{pmatrix},
\end{equation}
By computation we have $N'^3=0$ and $N', N_0\in \tilde{F}_N^{-1}$, as well as $[N', N_0]=0$. Moreover we have:
\begin{equation}
exp(iyN')=
\begin{pmatrix}
y^2+1 & -y^2+2yi & -y^2+yi & -y^2+yi & 0 & 0\\
-y^2-2yi & y^2+1 & y^2+yi & y^2+yi & 0 & 0\\
y^2+yi & -y^2+yi & -y^2+1 & -y^2 & 0 & -iy\\
y^2+yi & -y^2+yi & -y^2 & -y^2+1 & 0 & iy\\
0 & 0 & -iy & iy & 1 & 0\\
0 & 0 & 0 & 0 & 0 & 1
\end{pmatrix}, y\in \mathbb{R}
\end{equation}

If we use the basis vectors of $F_N^{2}$ in \eqref{eq8.6}, we will have: 
\begin{equation}
    i^2Q(exp(iyN')F_N^{2}, exp(-iyN')\overline{F_N^{2}})=
\begin{pmatrix}
2(2y+1)^2 & 0\\
0 & 4y^2
\end{pmatrix}
\end{equation}
which implies the positivity. Therefore, $N'$ and $N_0$ generate another $2$-dimensional nilpotent orbit with base point $F_N^{\bullet}$, and we have constructed a pair of nilpotent cones with maximal dimension intersecting along an interior point of both.
\medskip

We have the following generalization: let $N(a,b):=aN_1+bN_2$ ($a, b\in \mathbb{Z}_{>0}$), and set
\begin{equation}
\begin{split}
N_{a,b}&:=
 \begin{pmatrix}
    0 & 2\sqrt{ab} & b & a & 0 & 0\\
    -2\sqrt{ab} & 0 & b & a & 0 & 0\\
    a & a & 0 & 0 & 0 & 0\\
    b & b & 0 & 0 & 0 & 0\\
    0 & 0 & 0 & 0 & 0 & 0\\
    0 & 0 & 0 & 0 & 0 & 0
\end{pmatrix},\\
N_{a,b}&':= 
 \begin{pmatrix}
    0 & 2\sqrt{ab} & b & a & 0 & 0\\
    -2\sqrt{ab} & 0 & b & a & 0 & 0\\
    a & a & 0 & 0 & 0 & a\\
    b & b & 0 & 0 & 0 & -b\\
    0 & 0 & b & -a & 0 & 0\\
    0 & 0 & 0 & 0 & 0 & 0
\end{pmatrix}
\end{split}
\end{equation}
and $F_{N_{a,b}}^{\bullet}:=exp(N_{a,b})F^{\bullet}$. As above, we can prove $N(a,b)$ and $N_{a,b}'$ generate a nilpotent orbit with base point $F_{N_{a,b}}^{\bullet}$. Therefore, whenever $ab$ is a square number, there exists a nilpotent orbit with underlying rational nilpotent cone in $\mathfrak{g}_{\mathbb{Q}}$ intersecting with $\sigma_0$ at an interior point of both, and there are infinitely many such pairs $(a,b)$.

\bigskip
\subsection{Hosono and Takagi's example of type $(1,2,2,1)$}

Our second example is related to the family introduced in \textbf{Section \ref{S5}}, of Hodge type $h=(1,2,2,1)$. In this case, at a point in $D$, the induced Hodge structure on $\mathfrak{g}=End(H_{\mathbb{Q}})$ has weight $0$ and type $(h^{-3,3},...,h^{3,-3})=(1,2,5,5,5,2,1)$. Recall that one of the monodromy cones is generated by $N_x, N_y$ from \eqref{eq6.6}, which we reproduce for convenience:
 \begin{equation}\label{eq8.11}
N_x=
\begin{pmatrix}
0 & -1 & 0 & 0 & 0 & 0\\
0 & 0 & 0 & -10 & -5 & 0\\
0 & 0 & 0 & -10 & -10 & 0\\
0 & 0 & 0 & 0 & 0 & 0\\
0 & 0 & 0 & 0 & 0 & 1\\
0 & 0 & 0 & 0 & 0 & 0
\end{pmatrix},
N_y=
\begin{pmatrix}
0 & 0 & -1 & 0 & 0 & 0\\
0 & 0 & 0 & -10 & -10 & 0\\
0 & 0 & 0 & -5 & -10 & 0\\
0 & 0 & 0 & 0 & 0 & 1\\
0 & 0 & 0 & 0 & 0 & 0\\
0 & 0 & 0 & 0 & 0 & 0
\end{pmatrix}
\end{equation}
Writing $W_{\bullet}=W(N_x+N_y)_{\bullet}$, the elements 
\begin{equation}
\Upsilon:=\left\{
\begin{pmatrix}
0 & p & q & 0 & 0 & 0\\
0 & 0 & 0 & x & y & 0\\
0 & 0 & 0 & z & x & 0\\
0 & 0 & 0 & 0 & 0 & -q\\
0 & 0 & 0 & 0 & 0 & -p\\
0 & 0 & 0 & 0 & 0 & 0
\end{pmatrix}| p,q,x,y,z \in \mathbb{Q}\right\}
\end{equation}
of $\mathfrak{g}_{\mathbb{Q}}$ span the weight graded piece $Gr^W_{-2}\mathfrak{g}_{\mathbb{Q}}$.  Their intersections with the commutators $C(N_x), C(N_y)$ and $C(N_x+N_y)$ are given by

\begin{equation}
\begin{split}
    &C(N_x)\cap \Upsilon= \{(p,q,x,y,z) \in \mathbb{Q}^5 | x=10p+10q, y=5p+10q\} \\
    &C(N_y)\cap \Upsilon= \{(p,q,x,y,z) \in \mathbb{Q}^5 | x=10p+10q, z=10p+5q\} \\
    &C(N_x+N_y)\cap \Upsilon = \{(p,q,x,y,z) \in \mathbb{Q}^5 | x+z=20p+15q, x+y=15p+20q\}.
\end{split}
\end{equation}
Therefore we can take $N:=(1,1,22,13,13)$ to get a rational element in $[C(N_x+N_y)\backslash (C(N_x)\cap C(N_y))]\cap W_{-2}\mathfrak{g}_{\mathbb{Q}}$.  More precisely, 
\begin{equation}\label{eq8.14}
N=
\begin{pmatrix}
0 & -1 & -1 & 0 & 0 & 0\\
0 & 0 & 0 & -22 & -13 & 0\\
0 & 0 & 0 & -13 & -22 & 0\\
0 & 0 & 0 & 0 & 0 & 1\\
0 & 0 & 0 & 0 & 0 & 1\\
0 & 0 & 0 & 0 & 0 & 0
\end{pmatrix}, 
\end{equation}
\begin{equation}
exp(N)=
\begin{pmatrix}
1 & -1 & -1 & 35/2 & 35/2 & 35/3\\
0 & 1 & 0 & -22 & -13 & -35/2\\
0 & 0 & 1 & -13 & -22 & -35/2\\
0 & 0 & 0 & 1 & 0 & 1\\
0 & 0 & 0 & 0 & 1 & 1\\
0 & 0 & 0 & 0 & 0 & 1
\end{pmatrix}
\end{equation}
\normalsize

Next we are going to construct a $F^{\bullet}\in D$ which is polarized by $N_x$ and $N_y$. We recall more details from \cite{HT18} about the monodromy weight filtration. the canonical ordered basis for $H_{\mathbb{Q}}$ is:
\begin{equation}
    \{\alpha_0, \alpha_1, \alpha_2, \beta_2, \beta_1, \beta_0\}
\end{equation}
with the intersection form $Q$ in \eqref{eq5.3}, and the monodromy weight filtration $W_{\bullet}:=W_0\subset W_2\subset W_4\subset W_6 = H_{\mathbb{Q}}$ is given by:
\begin{equation}
    \begin{split}
        &W_0=<\alpha_0>, \\
        &W_2=<\alpha_0, \alpha_1, \alpha_2>,\\
        &W_4=<\alpha_0, \alpha_1, \alpha_2, \beta_2, \beta_1>,\\
        &W_6=<\alpha_0, \alpha_1, \alpha_2, \beta_2, \beta_1, \beta_0>,
    \end{split}
\end{equation}
We define $F^{\bullet}\in \check{D}$ as follows. Let $a_0=a+bi$, and
\begin{equation}\label{eq8.18}
    \begin{split}
        &F^3=<a_0\alpha_0+\beta_0>, \\
        &F^2=<a_0\alpha_0+\beta_0, N_x(a_0\alpha_0+\beta_0), N_y(a_0\alpha_0+\beta_0)>,\\
        &F^1=(F^3)^{\perp}
    \end{split}
\end{equation}
Now write $\sigma=\mathbb{R}_{>0}\langle N_x,N_y\rangle$.  To check whether $exp(\sigma_{\mathbb{C}})F^{\bullet}$ is a $\sigma$-nilpotent orbit, we need to check \textbf{(i)-(iii)} in \textbf{Definition \ref{def2.6}}. By the construction of $F^{\bullet}$, the commutativity and the Griffiths transversality are clear. To check the positivity, for $h\in \mathbb{R}_{>0}$ and $v\in F^3$, we need to look at the following $\mathbb{C}$-polynomials:
\begin{equation}
\begin{split}
    f_{N_x,v}(h):=i^{3-0}Q(exp(ihN_x)v, exp(-ihN_x)\bar{v})\\
    f_{N_y,v}(h):=i^{3-0}Q(exp(ihN_y)v, exp(-ihN_y)\bar{v})
\end{split}
\end{equation}
By computation, both polynomials are of degree $3$ and the coefficient of $h^3$-term in either polynomial is positive. Hence $exp(\sigma_{\mathbb{C}})F^{\bullet}$ is a $\sigma$-nilpotent orbit.  (Indeed, it corresponds to a Hodge-Tate degeneration.)  Moreover, one easily checks that $[N,N_x]F^3\not\subset F^1$, and so $N\notin C^h(\sigma)$ (while of course $N\in C(N_x+N_y)\subset C^h(N_x+N_y)$).  At this point, we can take $N_{\vec{v}}=N_x+N_y$ in the hypothesis of \textbf{Lemma \ref{lem4.4}} and apply \textbf{Theorem \ref{thm4.6}} to deduce that there is no complete fan.

\bigskip
Though strictly speaking unnecessary, it is instructive to investigate $F_N^{\bullet}:=exp(N)F^{\bullet}$ (with $N$ defined in \eqref{eq8.14}) a bit further to see the failure of $e^{\sigma_{\mathbb{C}}}F_N^{\bullet}$ to be a $\sigma$-nilpotent orbit quite explicitly, among other properties.  We have:
\begin{equation}\label{eq8.20}
    \begin{split}
        &F_N^3=<(a_0+\frac{35}{3})\alpha_0+\beta_0>, \\
        &F_N^2=<F_N^3, exp(N)N_x(a_0\alpha_0+\beta_0), exp(N)N_y(a_0\alpha_0+\beta_0)>,\\
        &F_N^1=(F_N^3)^{\perp}
    \end{split}
\end{equation}
Now we have:
\begin{equation}
N_x+N_y=
\begin{pmatrix}
0 & -1 & -1 & 0 & 0 & 0\\
0 & 0 & 0 & -20 & -15 & 0\\
0 & 0 & 0 & -15 & -20 & 0\\
0 & 0 & 0 & 0 & 0 & 1\\
0 & 0 & 0 & 0 & 0 & 1\\
0 & 0 & 0 & 0 & 0 & 0
\end{pmatrix}, 
\end{equation}
By the choice of $N$, $exp(N)$ and $N_x+N_y$ are commutative; hence
\begin{equation}
\begin{split}
(N_x+N_y)F_N^3&=(N_x+N_y)exp(N)F^3\\
              &=exp(N)(N_x+N_y)F^3\\
              &=exp(N)N_xF^3+exp(N)N_yF^3 \subset F_N^2,
\end{split}
\end{equation}
and 
$(N_x+N_y)F_N^2\subset F_N^1$ is clear. The positivity condition holds since
\begin{equation}
    i^{3-0}Q(e^{ih(N_x+N_y)}v, e^{-ih(N_x+N_y)}\overline{v})=\frac{280h^3-6b}{3}\rightarrow \infty
\end{equation}
as $h\rightarrow \infty$, where $v$ is the generator of $F_N^3$ in \eqref{eq8.20}. Therefore $exp(\mathbb{C}(N_x+N_y))F_N^{\bullet}$ is a $\mathbb{R}_{>0}\langle N_x+N_y\rangle$-nilpotent orbit.

Finally we check whether $exp(\mathbb{C}N_x)F_N^{\bullet}$ is horizontal. From \eqref{eq8.11}, \eqref{eq8.14}, \eqref{eq8.18}, we have:
\begin{equation}
    N_xF_N^3=\frac{35}{2}\alpha_0-15\alpha_1-20\alpha_2+\beta_1.
\end{equation}
Notice $exp(N)N_x(a_0\alpha_0+\beta_0)\in F_N^2$, and
\begin{equation}
\begin{split}
   Q(N_xF_N^3, exp(N)N_x(a_0\alpha_0+\beta_0)) &= Q(N_xexp(N)F^3, exp(N)N_xF^3)\\
    &=-Q(exp(N)F^3, N_xexp(N)N_xF^3)
\end{split}
\end{equation}
where
\begin{equation}
N_xexp(N)N_x=
\begin{pmatrix}
0 & 0 & 0 & 10 & 5 & 13\\
0 & 0 & 0 & 0 & 0 & -5\\
0 & 0 & 0 & 0 & 0 & -10\\
0 & 0 & 0 & 0 & 0 & 0\\
0 & 0 & 0 & 0 & 0 & 0\\
0 & 0 & 0 & 0 & 0 & 0
\end{pmatrix}, 
\end{equation}
Computation shows
\begin{equation}
 Q(N_xF_N^3, exp(N)N_x(a_0\alpha_0+\beta_0))=-2\neq 0
\end{equation}
This implies $N_xF_N^3\notin F_N^2$ and the Griffiths transversality fails. Therefore, $exp(\mathbb{C}N_x)F_N^{\bullet}$ is not horizontal, and so $exp(\sigma_{\mathbb{C}})F_N^{\bullet}$ is certainly not a nilpotent orbit.

\medskip

\printbibliography

@article{AAGGL16,
author = {Lara B. Anderson and Fabio Apruzzi and Xin Gao and James Gray and Seung-Joo Lee},
title = {A new construction of Calabi–Yau manifolds: Generalized CICYs},
journal = {Nuclear Physics B},
volume = {906},
year = {2016},
pages = {441-496}
}

@book{AMRT10,
author = {A. Ash and David Mumford and M. Rapoport and Y. S Tai},
title = {Smooth Compactifications of Locally Symmetric Varieties
},
edition = {2},
publisher = {Cambridge University Press},
year = {2010}
}

@article{BB66,
author = {Baily, W.L. and Borel, A},
title = {Compactification of arithmetic quotients of bounded symmetric domains},
journal = {Annals of Mathematics},
volume = {84},
year = {1966},
number = {3},
pages = {442-528}
}

@misc{BPR16,
      title={Nilpotent cones and their representation theory}, 
      author={P. Brosnan and G. Pearlstein and C. Robles},
      year={2016},
      eprint={1602.00249},
      archivePrefix={arXiv},
      primaryClass={math.AG}
}

@article{Car86,
author = {James A. Carlson},
title = {Bounds on the Dimension of Variations of Hodge Structure},
journal = {Transactions of the American Mathematical Society},
volume = {294},
year = {1986},
number = {1},
pages = {45-64}
}

@article{CDLS88,
author = {P. Candelas and A.M. Dale and C.A. Lütken and R. Schimmrigk},
title = {Complete Intersection Calabi-Yau Manifolds},
journal = {Nuclear Physics B},
volume = {298},
year = {1988},
number = {03},
pages = {493--525}
}

@article{CK82,
author = {Eduardo Cattani and Aroldo Kaplan},
title = {Polarized mixed Hodge structures and the local monodromy of a variation of Hodge structure},
journal = {Inventiones mathematicae},
volume = {67},
year = {1982},
pages = {101–115}
}

@book{CZGT13,
author = {Eduardo Cattani and Fouad El Zein and Phillip A. Griffiths and Lê Dũng Tráng},
title = {Hodge Theory 
},
series = {Mathematical Notes},
volume = {49},
publisher = {Princeton University Press},
}

@article{Del97,
author = {Pierre Deligne},
title = {Local behavior of Hodge structures at infinity},
journal = {Mirror Symmetry II},
year = {1997},
pages = {683 –- 699}
}

@misc{GGR21,
      title={Towards a maximal completion of a period map}, 
      author={Mark Green and Phillip Griffiths and Colleen Robles},
      year={2021},
      eprint={2010.06720},
      archivePrefix={arXiv},
      primaryClass={math.AG}
}

@article{HLY95,
author = {S. Hosono and B. H. Lian and S. -T. Yau },
title = {GKZ-Generalized hypergeometric systems in mirror symmetry of Calabi-Yau hypersurfaces},
journal = {Communications in Mathematical Physics},
volume = {182},
year = {1996},
pages = {535–577}
}

@article{HT14,
author = {Shinobu Hosono and Hiromichi Takagi},
title = {Determinantal Quintics and Mirror Symmetry of Reye Congruences},
journal = {Communications in Mathematical Physics},
volume = {329},
year = {2014},
pages = {1171-1218}
}

@article{HT18,
author = {Shinobu Hosono and Hiromichi Takagi},
title = {Movable vs Monodromy Nilpotent Cones of Calabi–Yau Manifolds},
journal = {Symmetry, Integrability and Geometry: Methods and Applications},
volume = {14},
year = {2018},
number = {039},
pages = {37}
}

@article{GGK08,
   author = {Mark Green and Phillip A. Griffiths and Matt Kerr},
   title={Néron models and boundary components for degenerations of Hodge structure of mirror quintic type},
   journal={Compositio Mathematica},
   volume = {465},
   year = {2008},
pages = {71-145}
}

@book{GGK12,
author = {Mark Green and Phillip A. Griffiths and Matt Kerr},
title = {Mumford-Tate Groups and Domains : Their Geometry and Arithmetic},
series = {Annals of Mathematics Studies},
volume = {183},
publisher = {Princeton University Press},
year = {2012}
}

@book{GGK13 ,
author = {Mark Green and Phillip Griffiths and Matt Kerr},
title = {Hodge Theory, Complex Geometry, and Representation Theory},
publisher = {American Mathematical Society},
year = {2013}
}

@misc{GGR14,
      title={Extremal degenerations of polarized Hodge structures}, 
      author={Mark Green and Phillip Griffiths and Colleen Robles},
      year={2014},
      eprint={1403.0646},
      archivePrefix={arXiv},
      primaryClass={math.AG}
}

@misc{Gr09,
author = {Phillp Griffiths},
title = {Logarithmic Hodge Structures:
(Report on the work of Kato-Usui)},
howpublished = {Available at \url{https://publications.ias.edu/sites/default/files/berkeley.pdf}}
}

@article{HR20,
   title={Hodge Representations},
   volume={1},
   ISSN={2516-712X},
   url={http://dx.doi.org/10.1017/exp.2020.55},
   DOI={10.1017/exp.2020.55},
   journal={Experimental Results},
   publisher={Cambridge University Press (CUP)},
   author={Han, Xiayimei and Robles, Colleen},
   editor={Clingher, AdrianEditor},
   year={2020}
}

@misc{KNU10,
      title={Classifying spaces of degenerating mixed Hodge structures, III: Spaces of nilpotent orbits}, 
      author={Kazuya Kato and Chikara Nakayama and Sampei Usui},
      year={2010},
      eprint={1011.4353},
      archivePrefix={arXiv},
      primaryClass={math.AG}
}

@article{KP16,
author = {Kerr, M and Pearlstein},
title = {Boundary components of Mumford–Tate domains},
journal = {Duke Math. J.},
volume = {165},
number = {4},
year = {2016},
pages = {661-721}
}

@article{KPR19,
author = {Kerr, M and Pearlstein, GJ and Robles, C},
title = {Polarized relations on horizontal SL(2)'s},
journal = {Documenta Mathematica},
volume = {24},
year = {2019},
pages = {1295-1360}
}

@book{KU08,
author = {Kazuya Kato and Sampei Usui},
title = {Classifying Spaces of Degenerating Polarized Hodge Structures},
series = {Annals of Mathematics Studies},
volume = {169},
publisher = {Princeton University Press},
year = {2008}
}

@misc{Moo04,
author = {Ben Moonen},
title = {An introduction to Mumford-Tate groups},
howpublished = {Available at \url{https://www.math.ru.nl/~bmoonen/Lecturenotes/MTGps.pdf}}
}

@article{Sch73,
author = {Wilfried Schmid},
title = {Variation of Hodge Structure: The Singularities of the Period Mapping},
journal = {Inventiones mathematicae},
volume = {22},
year = {1973},
pages = {211-320}
}

@article{Ste76,
author = {Joseph Steenbrink},
title = {Limits of Hodge structures},
journal = {Inventiones mathematicae},
volume = {31},
year = {1976},
number = {3},
pages = {229-257}
}

@article{Ste77,
author = {Joseph Steenbrink},
title = {Intersection form for quasi-homogeneous singularities},
journal = {Compositio Mathematica},
volume = {34},
year = {1977},
number = {2},
pages = {211-223}
}

@article{SS85,
author = {J. Scherk and J.H.M. Steenbrink},
title = {On the Mixed Hodge Structure on the Cohomology of the Milnor Fibre},
journal = {Mathematische Annalen},
volume = {271},
year = {1985},
pages = {641-665}
}

@article{Wat08,
author = {Kenta Watanabe},
title = {A counterexample to a conjecture of complete fan},
journal = {J. Math. Kyoto Univ.},
volume = {48},
number = {4},
year = {2008},
pages = {951-962}
}


\end{document}